%
% $Id: ifacconf.tex 19 2011-10-27 09:32:13Z jpuente $  
% Template for IFAC meeting papers
% Copyright (c) 2007-2008 International Federation of Automatic Control
%
%\documentclass{myifacconf}
%\documentclass[review]{siamonline190516}
%\documentclass[review]{siamart190516}
\documentclass{siamart190516}
\usepackage{times}
%\usepackage{natbib}
%\setcitestyle{authoryear,open={(},close={)}}
\usepackage{frenchineq}
\usepackage{fixmath}
\usepackage{mathtools}
\usepackage{diffcoeff}
\usepackage{comment}
\usepackage[english]{babel}
\usepackage[T1]{fontenc}
\usepackage{xargs}
\usepackage{amscd}
\usepackage{amsmath}
\usepackage{amsfonts}
\usepackage{amssymb}

\usepackage{graphicx}    % include this line if your document contains figures
\usepackage{subfigure}

\usepackage{float} %to force the figure to be in the place we want

\usepackage{algorithm}
\usepackage{algpseudocode}

\usepackage{cleveref}
%!TeX spellcheck = en_US

\usepackage[colorinlistoftodos,bordercolor=orange,backgroundcolor=orange!20,linecolor=orange,textsize=scriptsize]{todonotes}

\usepackage{mathrsfs,epsfig,setspace,amstext,color,graphics,verbatim}

\renewcommand{\textcolor}[2]{#2} % to remove the blue color from the reviewed passages

%\let\proof\relax
%\let\endproof\relax

%\usepackage{float}
%\usepackage{adjustbox}
%\usepackage{bm}
%\usepackage{bbm}
%\usepackage[numbers]{natbib}
%\usepackage{thmbox}
%\usepackage{dsfont}
%\usepackage[toc,page]{appendix} ##appendix already defined in IEEE
%\makeatletter
%\let\NAT@parse\undefined
%\makeatother
%\usepackage[numbers,sort&compress]{natbib}

\theoremstyle{theorem}
\newtheorem{claim}{Claim}
\newtheorem{observation}{Observation}

\theoremstyle{example}
\theoremstyle{definition}

\newtheorem{remark}{Remark}

\newcommand{\e}{e}
\newcommand{\myt}{T_\beta}
\newcommand{\myp}{P_{\beta}}
\newcommand{\myq}{Q_{\alpha}}
\newcommand{\myqd}{Q_{\alpha,d}}

\newcommand{\origp}{{P}}
\newcommand{\myg}{g_\beta}

\newcommand{\nmyp}{P}

\newcommand{\nmyg}{g}
\newcommand{\bZ}{\mathbb{Z}}
\newcommand{\Rset}{\mathbb{R}}
\newcommand{\I}{\mathrm{I}}

\newcommand{\C}{\mathbb{C}}
\newcommand{\R}{\mathbb{R}}
\newcommand{\Z}{\mathbb{Z}}
\newcommand{\cS}{\mathcal{S}}
\newcommand{\spec}{\operatorname{spec}}
\newcommand{\Bd}{\operatorname{Bd}}
\newcommand{\cP}{\mathcal{P}}
\newcommand{\cQ}{\mathcal{Q}}
\newcommand{\cH}{\mathcal{H}}
\newcommand{\real}{\operatorname{Re}}
\newcommand{\ball}{\mathcal{B}}
\def\cA{\mathcal{A}}

\newcommand{\Top}{\operatorname{top}}
\newcommand{\diag}{\operatorname{diag}}

\let\ifX\iftrue
\title{Multiply Accelerated Value Iteration for Non-Symmetric Affine Fixed Point Problems and application to Markov Decision Processes}
%%   \thanks{An extended abstract ($4$ pages) announcing the present results was accepted for presentation at the $24$th International Symposium on Mathematical Theory of Networks and Systems (MTNS 2020). Owing to the pandemic,
%% this symposium was postponed to August 2021.}}
% Title, preferably not more than 10 words.
\author{Marianne Akian\thanks{INRIA and CMAP, \'{E}cole polytechnique, IP Paris, CNRS. Address: CMAP, Ecole polytechnique, Route de Saclay, 91128 Palaiseau Cedex, France (emails: {marianne.akian@inria.fr}, {stephane.gaubert@inria.fr}, {omar.saadi@polytechnique.edu}).}
  \and
%  \author[CMAP]{
  St{\'e}phane Gaubert\footnotemark[2]%}
  \and
%  \author[HKU]{
  Zheng Qu\thanks{Department of Mathematics, The University of Hong Kong. Address: The University of Hong Kong, Pokfulam Road, Hong Kong (email: {zhengqu@hku.hk}).}
  \and
  %\author[CMAP]{
  Omar Saadi\footnotemark[2]
\thanks{O. Saadi acknowledges the support of the Hassan II Academy of Science and Technology. The authors acknowledge the support of the Gaspard Monge (PGMO) program of Fondation Math\'ematique
  Hadamard, EDF, Orange and Thales, of the ICODE institute of Paris-Saclay,
  and of the ``Investissement d'avenir'' r\'ef\'erence ANR-11-LABX-0056-LMH, LabEx LMH. Z. Qu acknowledges the support of Hong Kong Research Grants Council No. 27302016.}}

\begin{document}
%\begin{frontmatter}

\maketitle

%% \address[CMAP]{}
%% \address[HKU]{Department of Mathematics, The University of Hong Kong. Address: The University of Hong Kong, Pokfulam Road, Hong Kong (email: {zhengqu@hku.hk}).}
%% %\address[Third]{Electrical Engineering Department, 
%   Seoul National University, Seoul, Korea, (e-mail: author@snu.ac.kr)}

\begin{abstract}                % Abstract of not more than 250 words.
  We analyze a modified version of Nesterov accelerated gradient algorithm,
  which applies to affine fixed point problems with non self-adjoint
  matrices, such as the ones appearing in the theory of Markov decision
  processes with discounted or mean payoff criteria.
We characterize the spectra of matrices for which this algorithm
does converge with an accelerated asymptotic rate.
  We also introduce a $d$th-order
  algorithm, and show that it yields a multiply accelerated rate
  under more demanding conditions on the spectrum. We subsequently
  apply these methods to develop accelerated schemes for non-linear
  fixed point problems arising from Markov decision processes.
  This is illustrated by numerical experiments. 
\end{abstract}

\begin{keywords}
  Nonexpansive maps, dynamic programming, optimal control, large scale optimization, Nesterov acceleration, value iteration, Krasnosel'ski\u\i-Mann algorithm, fixed point problems.
%  Five to ten keywords, preferably chosen from the IFAC keyword list.
\end{keywords}
%\end{frontmatter}
%

\section{Introduction}
The dynamic programming method reduces
optimal control and repeated zero-sum game problems
%(with a discounted or a total payoff)
to fixed
point problems involving non-linear operators that are
order preserving and sup-norm nonexpansive, see~\cite{Bellman,puterman2014markov}
for background. The
0-player case, with a finite number $n$ of states, 
is already of interest.
%Then, the operator $T$ acts on $\R^n$. It is of the form $T(x)= g+Px$, where $g\in \R^n$ and $P\in \R^{n\times n}$ is a substochastic matrix. 
\textcolor{blue}{In this case, the involved operator is $T:\R^n \to \R^n$ of the form $T(x)= g+Px$, %$x\in \R^n$, 
where $g=(g_i)\in \R^n$ and $P=(P_{ij})\in \R^{n\times n}$ is a substochastic matrix, i.e.\ a matrix with nonnegative entries such that the sum of each row is less than or equal to $1$.}
The scalar $g_i$ is
an instantaneous payment received in state $i$, whereas
$P_{ij}$ is the transition probability from $i$ to $j$. The
difference $1-\sum_j P_{ij}$ is the probability
that the process terminates, when in state $i$.
If $v$ is a fixed point of $T$, the entry $v_i$ yields
the expected cost-to-go from the initial state $i$.
More generally, in the one player case (Markov decision processes),
one needs to solve a non-linear fixed point problem,
%% A non-linear operator, arising from the one-player
%% case (Markov decision processes), is
described in Section~\ref{sec-numerical}, in which the operator
$T$ is now a supremum of affine operators $x\mapsto g+Px$.
%% \todo{SG : check consistency}

The standard method to obtain the fixed point of $T$
is to compute the sequence $x_k=T(x_{k-1})$, this is known as {\em value
  iteration}~\cite{Bellman}.
%% ,puterman2014markov}. \todo{SG: add ref, whittle, bertsekas...---- OS: I added a reference of Bellman}
In the $0$-player case, value iteration has an asymptotic (geometric)
convergence rate given by the spectral radius of $P$.
In many applications, this spectral radius is of the form $1-\epsilon$
where $\epsilon$ is {\em small}. E.g.,
$\epsilon$ may represent a discount rate.
We look for {\em accelerated}
fixed point algorithms, with a convergence rate $1-\Omega(\epsilon^{1/d})$
for some $d\geq 2$, \textcolor{blue}{i.e.\ a convergence rate that is smaller than $1- c \epsilon^{1/d}$ for some constant $c>0$.}

In the special case of $0$-player problems with a {\em  symmetric}
matrix $P$, an algorithm with a rate  $1-\Omega(\epsilon^{1/2})$
can be obtained by specializing the accelerated gradient algorithm
of Nesterov~\cite{nesterov83}.
The latter algorithm applies to the minimization of a smooth strictly convex function $f$,
which, in the quadratic case, reduces to an affine fixed point problem
with a symmetric matrix $P$. See~\cite{Flammarion15}.
In contrast, developing accelerated algorithms
for problems of non-symmetric type is a challenging question,
which has been studied recently in~\cite{iutzeler,julien}.

 We study here the affine fixed point
problem $x=g+Px$ where the matrix $P$ is non symmetric,
and possibly not substochastic. \Cref{ThmSigma}, one of our main results, states that
a modification of Nesterov's scheme~\cite{nesterov83} does converge
with an asymptotic rate $1-\epsilon^{1/2}$
if the spectrum of $P$ is contained in an explicit
region of the complex plane, obtained
as the image of the disk of radius $1-\epsilon$ by a rational
function of degree $2$. 
%We show that this region is tight.
We also show that the
 incorporation of a
Krasnosel'ski\u\i-Mann type damping~\cite{mann,krasno} (see~\Cref{a:yk})  enlarges the admissible spectrum region of $P$ for acceleration, see~\Cref{th:flyingsaucer}.
Moreover, we introduce a new scheme~\eqref{dAVI},
of order $d\geq 2$, and show in~\Cref{dSigmaAcc} that it leads to a multiply accelerated asymptotic
rate of $1-\epsilon^{1/d}$, but under a
more demanding condition on the spectrum of $P$, see~\Cref{dSigmaAcc}.
This theorem also shows that this condition is tight.
However, slightly more flexible conditions suffice
to guarantee a rate of $1-\Omega(\epsilon^{1/d})$,
as shown by~\Cref{thm:daccle}.

We subsequently apply the proposed schemes and theoretical results, concerning the affine ``$0$-player case'' , to solve
 non-linear
fixed point problems arising from Markov decision processes.
We use policy iteration, which allows a reduction
to a sequence of affine fixed point problems,
still benefiting of acceleration for the solution of each
affine problem.
This leads to an accelerated policy iteration algorithm (see~\Cref{d-API}),
\textcolor{blue}{which produces an approximate solution with a precision 
of order $((1+\gamma)\delta+\delta')/(1-\gamma)^2$
where $\gamma$ is the maximal discount factor}, $\delta$ is the accuracy of each inner affine problem and $\delta'$ is the accuracy of the policy improvement,
see~\Cref{prop-dAPI}.
%% \todo{SG: describe the main results. Put references to the main theorem. Mention the  numerical experiments. Summarize the finding. Conclude that the restriction on the spectrum is essential. -------ZQ: I put references on the main theorems. I let you mention the numerical experiments. -----OS: I added a description of the numerical results.}

In~\Cref{sec-numerical}, we show the performance of the simple and multiple acceleration schemes, on classes of instances in which the spectral conditions
for acceleration are met. 
%of order $2$, and also of the multi acceleration scheme of order $4$
In~\Cref{subsec-random}, we consider a framework of random matrices that shows distributions of eigenvalues~\cite{Bordenave08} that are compatible with the spectral conditions required for the convergence of the simple and multiple acceleration schemes proposed here. In~\Cref{subsec-HJB}, we show the performance of the accelerated schemes in solving a Hamilton-Jacobi-Bellman equation in the case of small drifts. This example illustrates the usefulness of~\Cref{ThmSigmastronger} that allows to have a more tolerant accelerable region on the complex plane while still benefiting from an accelerated asymptotic rate of $1-\Omega(\epsilon^{1/2})$. 

%% \todo[inline]{SG: the discussion between API and AVI is unbalanced. Add API in simulations and change accordingly the next senetence}
%% In addition to showing the performance of the accelerated policy iteration algorithm~\ref{d-API}, this family of examples also shows that accelerated
%% value iteration does converge on relevant one-player Markov decision processes,
%% although the convergence of accelerated value iteration is only
%% established here for 0-player instances.

The recent works \cite{iutzeler,julien} also deal with generalizations of Nesterov's accelerated algorithm to solve fixed point problems.
Their theoretical convergence results apply to matrices with
a real spectrum, showing that the original choice of parameters for
Nesterov's method in the symmetric case still yields an
acceleration in this setting. In contrast, we allow a complex spectrum
and characterize the region of the complex plane containing
spectra of matrices for which the acceleration is valid (see~\Cref{ThmSigma} and~\Cref{th:flyingsaucer}).
Also, a main novelty of the present work
is the analysis of multiple accelerations~\eqref{dAVI}.
The idea of applying Nesterov's acceleration to Markov decision
processes appeared in~\cite{julien}, in which a considerable
experimental speedup is reported on random instances.
The algorithm there coincides with one of the algorithms
studied here --  $2$-accelerated value iteration for Markov decision
processes. It is an open problem 
to establish the convergence of this method
for large enough classes of Markov decision processes. The characterization
of the set of ``accelerable'' 0-player problems that we
provide here explains why this problem is inherently difficult:
in the $0$-player problem, the convergence conditions
%% <<<<<<< .mine
%% are governed by fine spectral properties which have
%% no known non-linear analogue in the one-player case.
%% %\todo{SG: to be continued}
%% ||||||| .r468
%% are governed by detailed spectral properties which have
%% no non-linear analogue in the one-player case.
%% \todo{SG: to be continued}
%% =======
are governed by fine spectral properties which have
no known non-linear analogue in the one-player case.
%\todo{SG: to be continued--- OS: we finished this point Stéphane and me}

Apart from being applied to Markov decision processes,  fixed point iteration 
also includes as a special case the proximal point method~\cite{Rockafellar76}, when the mapping $T$  corresponds to the resolvent of a maximal monotone operator.  The proximal point method 
covers  a list of pivotal  algorithms in optimization  such as the proximal gradient descent, the augmented Lagrangian method (ALM)~\cite{Rockafellar1976Augmented} and the alternating directional method of multipliers (ADMM)~\cite{Eckstein1992On}. 
The development of accelerated proximal point method has thus attracted a lot of attention~\cite{ChenMaYang,AttouchPeypouquet2020,Attouch2020} and a recent paper~\cite{kim2019accelerated}  constructed a new algorithm achieving  $\|x_k-T(x_k)\|\leq O(1/k)$ through the performance estimation problem (PEP) approach~\cite{DroriTeboulle}. 
In a more general setting when $T$  is a nonexpansive mapping in a Euclidean norm, a version of Halpern's iteration was recently shown
to yield a residual $\|x_k-T(x_k)\|\leq O(1/k)$~\cite{Lieder}, also via the PEP approach. These results improve over the worst case bound 
$\|x_k-T(x_k)\|\leq O(1/\sqrt{k})$ of   the  Krasnoselski-Mann's iteration for a nonexpansive mapping (in arbitrary norm)~\cite{baillon1996bruck}.
%\todo{ZQ: could you add a ref here pls?---OS: I added a reference }
The acceleration results in the above cited works
%% \todo{SG $\to$ ZQ: ``work'' is singular, which one do you refere to? (give baillon bruck applies to any norm, this looks confusing - baillon bruck should be excluded)}
do not overlap with ours as they only 
apply to nonexpansive mappings in a Euclidean norm.  Moreover, in this paper we consider
strictly contractive mapping and thus focus on linear instead of sublinear convergence guarantees. 

There is also a large body of literature on (quasi-)Newton type methods for solving nonlinear equations~\cite{Fang2009TwoCO,IzmailoSolodov,WalkerNi}, which can
be naturally employed for solving fixed point problem and yield fast asymptotic convergence rate. 
It is well-known that such methods converge only when close enough to the solution. Some  papers
proposed various safe-guard conditions to globalize the convergence~\cite{superman,zhang2018globally} and do not provide a rate
of convergence.  We formally characterize the spectrum condition and the faster convergence rate 
of accelerated value iteration for affine fixed
point problem.

The paper is organized as follows. In~\Cref{sec:AVI} we introduce the accelerated value iteration (AVI) of any degree $d\geq 2$.  In~\Cref{sec:d2} we provide a formal analysis of AVI of degree 2. In~\Cref{sec:d3} we analyze AVI of arbitrary 
degree $d\geq 2$ and also present the application to Markov decision processes. In~\Cref{sec-numerical} we provide numerical experimental results.

%\todo{SG/Zheng: compare... and edit}

%Acceleration of fixed point scheme of nonexpansive mappings has attracted  many attention, developing faster optimization algorithms These questions may be thought of as special instances of the general problem of developing accelerated schemesto construct fixed point of nonexpansive mappings. The most general result concerns Krasnoselski-Mann's iteration, leading to a residual $\|x_k-T(x_k)\|\leq O(1/\sqrt{k})$. A version of Halpern's iteration was recently shown to yield a residual $\|x_k-T(x_k)\|\leq O(1/k)$ for a nonexpansive mapping in a Euclidean norm~\cite{}.  \todo{quote the other paper given by Zheng}

%% \todo[color=blue!30]{SG: I now say that our that the AVI algo appeared there}
%% , or the case
%% of reversible Markov chains which is equivalent to symmetric fixed
%% point problems 

\nocite{krasno,mann}

%References on Nesterov acceleration: \cite{nesterov83,Flammarion15}

%% The Krasnosel'ski\u\i-Mann iteration was introduced in~\cite{krasno,mann}.
%% Convergence results can be fond in~\cite{ishikawa,baillonbruck}.
%Krasnosel'ski\u\i
%Compare with the results of~\cite{iutzeler,julien}.
\section{Accelerated Value Iteration}\label{sec:AVI}

%% We consider an affine operator $T:\Rset^n \rightarrow \Rset^n$, defined as follows:
%% \[
%% T(x)=g+\origp x \quad , \quad x \in \Rset^n ,
%% \]
%% where $g\in \Rset^n$ is a given vector and $P\in \Rset^{n\times n}$ is a matrix.

%% Our goal is to solve the fixed point problem 
%% \[
%% x=T(x) \enspace.
%% \]

%\subsection{Accelerated Value Iteration}

%% A classical algorithm to solve the fixed point problem $x=T(x)$ for a nonexpansive operator $T$ is the method of \cite{krasno} and \cite{mann}, where we iterate an operator $\myt:=(1-\beta) \Id + \beta T$ with $\Id$ being the identity operator and $\beta \in (0,1]$ a scalar.
%% %  (this operator has exactly the same fixed points as $T$).
%%   Therefore, starting from a point $x_0\in \Rset^n$, the iteration is defined by $x_{k+1}=(1-\beta) x_k + \beta T(x_k)$ for $k=0,1,\cdots$. 

%\todo[color=blue!30]{SG: added constant step}

Nesterov proposed in~\cite{nesterov83,Nesterovbook} to accelerate the gradient descent scheme for the minimization of a  $\mu$-strongly convex function $f:\R^n\rightarrow \R$ whose gradient is of Lipschitz constant $L$, by adding an inertial step:
\begin{subequations}
  \label{a:agd}
\begin{align}
%\left\{ \begin{array}{ll}
x_{k+1}&=y_k - h \nabla f(y_k) \enspace,\\
y_{k+1}&=x_{k+1}+\alpha(x_{k+1}-x_{k})\enspace,
%\end{array} \right. \enspace.
\end{align}
\end{subequations}
where $0<h$, and $\alpha \in [0,1]$ are parameters.
Let $x_*$ be the minimizer of $f$.
When $\alpha=0$,~\eqref{a:agd} reduces to gradient descent.
%% \todo[color=blue!30]{SG: the value of the rate in Nesterov is $1-2\mu/(\mu+L)$, we are giving a suboptimal rate ------ZQ: I changed to $1-2\mu/(\mu+L)$ and add a few lines about $L/\mu\gg 1$.}
With the step $h=1/L$, the gradient descent
converges linearly with a rate $1-2\mu/(L+\mu)$.
Indeed, we have $\|x_k-x_*\|^2
\leq \big(1-2\mu/(L+\mu) \big)^k \|x_0-x_*\|^2$,
and $f(x_k)- f(x_*) \leq \frac{L}{2}
 \big(1-2\mu/(L+\mu)\big)^k \|x_0-x_*\|^{2}$,
for all $k\geq 1$, see
%the proof of 
Theorem 2.1.14 in \cite{Nesterovbook}.
Moreover, Theorem 2.2.3, {\em ibid.}, implies 
%\cite[Theorem 2.2.3]{Nesterovbook} showed
that if we choose
\begin{align}\label{a:alpha}
\alpha=\frac{1-\sqrt{\mu/L}}{1+\sqrt{\mu/L}},
\end{align}
still with $h=1/L$, the scheme~\eqref{a:agd} converges linearly with a rate $1-\sqrt{\mu/L}$. Indeed, with $\alpha$ given by~\eqref{a:alpha}, we have $f(x_k)-f(x_*) \leq 2(1-\sqrt{\mu/L})^k(f(x_0)-f(x_*))$ for all $k\geq 1$. Note that when the condition number $L/\mu$ is large, i.e. $L/\mu \gg 1$, the rate  $1-\sqrt{\mu/L}$ improves over   $1-2\mu/(L+\mu)$, whence the scheme~\eqref{a:alpha} is commonly known as \textit{accelerated gradient descent}.

We consider the fixed point problem for the operator
 \begin{align}
\label{a:T}T(x) =g+\nmyp x \enspace.
\end{align}
\textcolor{blue}{Here, we allow the vector $x$ and the matrix $\nmyp$ to have complex entries, requiring only the spectral radius of the matrix $\nmyp$ to be strictly less than $1$. In the application to MDPs, the vector $x$ will be real and the matrix $P$ will be nonnegative.
}
%assuming that the spectral radius of the matrix $\nmyp$ is strictly less than $1$.
% equal to $1-\epsilon$ for some $\epsilon>0$. 
%\todo[color=magenta]{ZQ:  $\epsilon$ is some fixed constant.}
By abuse of notation, we  denote by $x_*$ the unique fixed point of $T$.  
%Starting from some initial values $x_0, y_0\in \Rset^n$,
We study the {\em Accelerated Value Iteration algorithm (AVI)} 
for computing a fixed point of the operator $T$. It makes
%% <<<<<<< .mine
%% a Krasnosel'ski\u\i-Mann type damping of parameter $0<\beta\leq 1$,
%% \todo{ZQ: $\beta\in ]0,1]$? OS: yesSG agreed and modifed accordingly.}
%% ||||||| .r340
%% a Krasnosel'ski\u\i-Mann type damping of parameter $0<\beta<1$,
%% \todo{ZQ: $\beta\in ]0,1]$?}
%=======
a Krasnosel'ski\u\i-Mann (KM) type damping of parameter $0<\beta\leq 1$,
replacing $T$ by $(1-\beta)I + \beta T$,
followed by a Nesterov acceleration step:
%we define the iterations of AVI as follows: 
\begin{subequations}
  \label{AVIalphabeta}
\begin{align}
%\left\{ \begin{array}{ll}
  x_{k+1}&=%\beta g+
  (1-\beta) y_k + \beta T(y_k) \enspace ,\label{a:yk} \\ 
y_{k+1}&=x_{k+1}+\alpha(x_{k+1}-x_{k})
%\enspace, 
%\end{array} \right., \;
%\qquad k=0,1,\cdots
\enspace .
\end{align}
\end{subequations}
When $\alpha=0$ and $\beta=1$, the scheme~\eqref{AVIalphabeta} reduces to the standard fixed point iteration algorithm:
\begin{equation}\label{e-VI}
x_{k+1}=\nmyg+\nmyp x_k \enspace.
\end{equation}
When the spectral radius of $P$ is smaller than $1-\epsilon$ for some $\epsilon \in (0,1)$, the standard fixed point scheme converges with an asymptotic rate no greater than $1-\epsilon$ to the unique fixed point, meaning that \textcolor{blue}{for any norm $\| \cdot \|$}
$$\limsup_{k\to\infty}\|x_k-x_*\|^{1/k}
\leq 1-\epsilon.$$

By analogy with accelerated gradient descent, we aim at accelerating the standard fixed point scheme by finding appropriate parameters $\alpha$ and $\beta$ so that 
 \begin{align}\label{a:sqrt}\limsup_{k\to\infty}\|x_k-x_*\|^{1/k}
\leq 1-\sqrt{\epsilon},\end{align}
for matrices $P$ with spectral radius bounded by $1-\epsilon$.
\begin{remark}\rm\label{rem:quadraticf}
%\todo{OS: We can recover \eqref{AVIalphabeta} only in the case of $P$ symmetric; I precised it in the first sentence, otherwise $I-P$ is not obliged to be symmetric in the last sentence of the remark}
  If $P$ is symmetric, the iteration~\eqref{AVIalphabeta} can be recovered
  by applying the accelerated gradient descent 
scheme~\eqref{a:agd} to the quadratic function $f(x)\equiv \frac{1}{2} x^\top (I-P)x-g^\top x$. The damping parameter $\beta$ corresponds to the step $h$.
%  \todo[color=blue!30]{SG I rewrote{a:avi} for \eqref{AVIalphabeta}, since the correspondence is better understood with a general step $h$, and I rewrote the whole remark}
  However, Nesterov's results only apply to the case when $f$ is a strongly convex function. This requires in particular $I-P$ to be symmetric
  positive definite. In particular all the eigenvalues of $P$ must be real
  and smaller than $1$. 
\end{remark}
The scheme~\eqref{AVIalphabeta} for fixed point iteration %, corresponding to $d=2$ in~\eqref{dAVIy},
 has been considered recently by
\cite{iutzeler,julien}. Moreover, inspired by the momentum method~\cite{POLYAK19641,7330562} for improving gradient descent, 
~\cite{julien} also proposed a momentum fixed point method described as follows:
	\begin{align}
	\label{mFP}
	x_{k+1}&=(1-\beta)x_k+ \beta T(x_k)+\alpha (x_k-x_{k-1}).
\end{align}
Asymptotic rate analysis for~\eqref{AVIalphabeta}~\eqref{mFP} follows from~\cite{julien} when the spectrum of $P$ is real. 

As discussed in the introduction,
our main results apply to complex spectra,
and also to higher degree of acceleration.

In the scheme~\eqref{AVIalphabeta}, $y_{k+1}$ is generated from a linear combination of the last two iterates. We now consider  the following {\em Accelerated Value Iteration of degree $d$ ($d$A-VI)}, in which  $y_{k+1}$ is   a linear combination of the last $d$ iterates for any $d\geq 2$,
  \begin{subequations}
	\label{dAVI}
	\begin{align}
	x_{k+1}&=(1-\beta)y_k+ \beta T(y_k) \enspace ,\label{dKM}\\
	y_{k+1}&=(1+\alpha_{d-2}+\cdots+\alpha_0)x_{k+1} -\alpha_{d-2} x_k -\cdots -\alpha_0 x_{k-d+2} \enspace . \label{dAVIy}
	\end{align}
\end{subequations}
 We will show how to select the parameters $\alpha=(\alpha_0, \cdots, \alpha_{d-2})$
to obtain an acceleration of  order $d$, in the sense that %\todo{OS: maybe we define here that we call $1-{\epsilon}^{1/d}$ "the asymptotic rate of the system" to answer to point 4 of Reviewer 2?}
\begin{align}\label{a:dferef}\limsup_{k\to\infty}\|x_k-x_*\|^{1/k} 
\leq 1-{\epsilon}^{1/d}.\end{align}

\textcolor{blue}{
\begin{remark}\rm\label{rem:AA}
  The idea of accelerating the vanilla KM fixed point method by extrapolating a finite number of previous steps
  goes back to the work of Anderson in 1965~\cite{Anderson}. The algorithm known as Anderson Acceleration
  (AA) chooses dynamically the extrapolation coefficients, while the coefficients $\alpha=(\alpha_0, \cdots, \alpha_{d-2})$ in $d$A-VI~\eqref{dAVI} remain constant for all the iterations. The theoretical analysis of AA and of its variants is still under development. In particular, the theoretical convergence rate of AA seems to be missing in the literature, except in the special case when $T$ corresponds to the gradient descent mapping of a strongly convex and smooth function~\cite{SBA17}. When $T$ takes the form of~\eqref{a:T}, this requires $P$ to be symmetric, see~\Cref{rem:quadraticf}. In~\cite{zhang2018globally}, a modified AA, interleaving KM updates by using safe-guarding steps, is shown to be globally converging, but the convergence rate is not analyzed. As shown later, the $d$-AVI~\eqref{dAVI} does not need any safe-guard checking and will converge with 
accelerated asymptotic rate as in~\eqref{a:dferef} under some conditions on the spectrum of $P$.
\end{remark}}
\textcolor{blue}{
\begin{remark}\rm\label{rem:computational-cost}
The computational cost of one iteration of the classical Value Iteration algorithm~\eqref{e-VI} is $O(n^2)$. In comparison, the computational cost of one iteration of the $d$A-VI algorithm~\eqref{dAVI} is $O(n(n+d))$. Regarding the space complexity, the classical Value Iteration needs to store two vectors ($x_{k+1},x_k$) each of size $n$, so it needs a $2n$ space of memory. In comparison, the $d$A-VI algorithm needs to store $d+1$ vectors ($y_{k+1},x_{k+1},\cdots,x_{k-d+2}$) each of size $n$, so it needs $(d+1)n$ space of memory. We notice that in practice the degree $d$ that we will use is small ($\leq 4$), therefore the computational cost of one iteration of $d$A-VI and its space complexity are similar to the ones of the classical Value Iteration algorithm. Moreover, the asymptotic convergence rate $1-\epsilon^{1/d}$ allows the $d$A-VI algorithm to converge in a number of iterations smaller than the Value Iteration algorithm (see the numerical experiments in~\Cref{sec-numerical}).
\end{remark}
}

%\todo{OS: Add the initial condition for $x_0, \cdots, x_{d-1}=0$ for example}
%% where we use a Krasnosel'ski\u\i-Mann dumping (\ref{dKM}) and then an $d$-acceleration (\ref{dAVIy}) which uses the $d$ previous terms of the sequence $(x_i)_{k-d+2\leq i\leq k+1}$ in order to compute $y_{k+1}$,

%% We shall see that  this allows an accelerated convergence when $T(x)=g+\origp x$, for a class of {\em non-symmetric} matrices $\origp$. In contrast, Nesterov's accelerated gradient descent applies to the minimization of a convex function.
%% In particular, it applies to the minimization
%% of a function of the form $f(x)= x^\top A x -b\cdot x$, with $A$ a $n\times n$ real positive definite symmetric matrix, and $b\in \Rset^n$, or equivalently, to 
%% the fixed point problem $Ax=b$. This corresponds to $A=I-\origp$. In contrast,
%% that $A$ is symmetric. 
%% Note that, when a quadratic function $f$ is taken, the classical Nesterov's acceleration of the gradient descent algorithm is restricted to symmetric matrices that represent the gradient of the function $f$. In this sens, the work that we are proposing is more general since
%%we allow the matrices to be non-symmetric and so to have non-real eigenvalues.

%{\bf Our algorithm.}

%\todo[color=blue!30]{SG: I now say that the sheme existed}

\if{\subsection{The effect of the Krasnosel'ski\u\i-Mann scaling}
The use of the Krasnosel'ski\u\i-Mann iteration is equivalent to looking for the fixed point of the scaled operator $\myt:x\mapsto \myg +\myp x$ instead of $T$.
%A classical approach to find fixed point problem for nonexpansive operators is to use Krasnosel'ski\u\i-Mann method~\cite{}, which is an averaging between the identity and the operator $T$. We consider the operator $\myt$ defined by
So for all $x \in \Rset^{n}$, $\myt(x)=(1-\beta)x+\beta T(x)=\myg+\myp x$, with $\myg=\beta g$, and $\myp=(1-\beta)\I+\beta \origp$, where $\I$ is the identity matrix. We denote by $\spec\origp$ the spectrum of a matrix $\origp$.
\todo{is it the good place to define $\spec$?}
%% The spectrum of $\origp$, denoted by $\spec\origp$, and the spectrum of $\myp$ are related as shown in the following remark. 
%We easily show that fixed points of $T$ are exactly the fixed points of $\myt$.
\begin{observation}\label{RemHomothety}
  The spectrum of $\myp$ is the image in the complex plane of the spectrum of $\origp$ by the homothety $H^{\beta}:=z\mapsto 1+ \beta (z-1)$ of center $1$ and ratio $\beta$.
  %% So any geometric property on the spectrum required on the spectrum of $\myp$ is equivalent to a geometric property on the spectrum of $\origp$ which is an image  of the spectrum of $\myp$ by the homothety of center $1$ and with the ratio $\beta'=\frac{1}{\beta}\in [1,+\infty[$.
\end{observation}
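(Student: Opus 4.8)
The plan is to recognize that $\myp=(1-\beta)\I+\beta\origp$ is simply a degree-one polynomial evaluated at $\origp$, namely $\myp=p(\origp)$ with $p(z)=(1-\beta)+\beta z=1+\beta(z-1)=H^\beta(z)$. Under this identification the assertion is exactly the spectral mapping theorem specialized to affine polynomials, so the entire content reduces to verifying the affine case directly. Because $0<\beta\le 1$ guarantees $\beta\neq 0$, the map $H^\beta$ is an invertible affine transformation of $\C$, which is what will make the two inclusions match cleanly.

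Concretely, I would argue on singularity. For any $\lambda\in\C$,
\[
\myp-\lambda\I=(1-\beta)\I+\beta\origp-\lambda\I=\beta\Big(\origp-\tfrac{\lambda-(1-\beta)}{\beta}\I\Big),
\]
where division by $\beta$ is legitimate since $\beta>0$. Hence $\myp-\lambda\I$ is singular if and only if $\origp-\mu\I$ is singular, with $\mu=\tfrac{\lambda-(1-\beta)}{\beta}$. Rearranging this relation gives $\lambda=(1-\beta)+\beta\mu=1+\beta(\mu-1)=H^\beta(\mu)$. Thus $\lambda\in\spec\myp$ if and only if $\lambda=H^\beta(\mu)$ for some $\mu\in\spec\origp$, which is precisely the claim $\spec\myp=H^\beta(\spec\origp)$.

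If an eigenvector-based argument is preferred, one notes that $\origp v=\mu v$ implies $\myp v=\big((1-\beta)+\beta\mu\big)v=H^\beta(\mu)v$, yielding the inclusion $H^\beta(\spec\origp)\subseteq\spec\myp$; the reverse inclusion follows by reading the same computation backwards (using $\beta\neq 0$) or by passing to a triangular (e.g.\ Jordan) form of $\origp$, under which $\myp$ is triangular with diagonal entries obtained by applying $H^\beta$ to those of $\origp$. I do not anticipate any genuine obstacle here: the only point deserving care is to keep $\beta>0$ so that $H^\beta$ is a bijection of $\C$ and the two inclusions coincide exactly, which is ensured by the standing assumption $0<\beta\le 1$.
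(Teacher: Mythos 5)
Your proof is correct and complete: the singularity argument $\myp-\lambda\I=\beta\bigl(\origp-\tfrac{\lambda-(1-\beta)}{\beta}\I\bigr)$ with $\beta\neq 0$ gives both inclusions at once, which is exactly the affine spectral mapping fact the paper relies on. The paper states this as an Observation without proof, treating it as immediate, and your argument is precisely the routine verification it leaves implicit.
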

In the following we will show that if the spectrum of $\myp$ belongs to a certain region of the complex plane (the {\em accelerable region}) then the AVI algorithm does converge. Therefore, thanks to Observation~\ref{RemHomothety}, by choosing an appropriate $\beta \in ]0,1]$, AVI does converge if the spectrum of the initial matrix $\origp$ is included in any image of the accelerable region by an homothety of center $1$ and with any given ratio $\beta'=\frac{1}{\beta}\in [1,+\infty[$. So, in the following we will restrict the study of the accelerable region to the spectrum of $\myp$.

%        Given $x_0, y_0\in \Rset^n$,
        The AVI algorithm can be rewritten as follows:

\begin{equation}\label{AVI}
\left\{ \begin{array}{ll} x_{k+1}=\myg+\myp y_k \\
y_{k+1}=x_{k+1}+\alpha(x_{k+1}-x_{k})
\end{array} \right., \; k=0,1,\cdots .
\end{equation}}\fi

    \section{Analysis of Accelerated Value Iteration of degree $2$}\label{sec:d2}
In this section we analyse the AVI scheme~\eqref{AVIalphabeta}.
We will show that with an appropriate choice of the acceleration parameter $\alpha$, and under an assumption on the shape of the complex spectrum of $\origp$, the asymptotic rate can  indeed be improved up to $1-\sqrt{\epsilon}$. We also show that the damping parameter $\beta$
will allow us
to enlarge the convergence region, while keeping
the acceleration properties. We deal separately with the special $d=2$ case,
since it is more elementary, easier to compare with existing
acceleration schemes, and since it gives insight on the generalization
to the higher degree case which will be done in~\Cref{sec:d3}. 

\subsection{The spectrum of the AVI iteration}
%We first assume that $\beta=1$. 
We define $\myp:= (1-\beta) I + \beta \nmyp$. %, with $I$ being the identity matrix. 
Then, the AVI algorithm~\eqref{AVIalphabeta} can be written as the second order iteration 
\begin{align}\label{a:avi}y_{k+1}=\beta \nmyg +(1+\alpha)\myp y_{k}-\alpha \myp y_{k-1}.\end{align} Considering $z_k=y_k-x_*$, the iteration becomes $z_{k+1}=(1+\alpha)\myp z_{k}-\alpha \myp z_{k-1}$.
%, and we are looking for the rate of convergence of $z_k$ to $0$.
This is equivalent to:
%second order iteration on $(z_k)$ is equivalent to:
\begin{equation}\label{AVIQ}
\begin{pmatrix} 
z_{k+1} \\
z_k
\end{pmatrix}=
%\qab
\begin{pmatrix} 
(1+\alpha)\myp & -\alpha \myp \\
I & 0 
\end{pmatrix}
\begin{pmatrix} 
z_{k} \\
z_{k-1}
\end{pmatrix} \enspace 
\end{equation}
%% where
%% \[
%% \qab:=
%% \in \R^{2n\times 2n}
%% \enspace .
%% \]
%% So $\begin{pmatrix} 
%% z_{k+1} \\
%% z_k
%% \end{pmatrix}=
%% \myq
%% \begin{pmatrix} 
%% z_{k} \\
%% z_{k-1}
%% \end{pmatrix}=
%% \myq^k
%% \begin{pmatrix} 
%% z_{k} \\
%% z_{k-1}
%% \end{pmatrix}$, where $\myq=\begin{pmatrix} 
%% (1+\alpha)\nmyp & -\alpha \nmyp \\
%% I & 0 
%% \end{pmatrix} \in  \Rset^{2n\times 2n}$.
%We will analyze the spectrum of the matrix $\myq$.
%depending on the spectrum of $\nmyp$ and the choice of $\alpha$.

%We notice that $\delta$ is an eigenvalue of $\nmyp$ if and only if there exists $\delta'$ and eigenvalue of $\origp$ such that $\delta=1-\beta+\beta\delta'$. Geometrically, this means that $\delta$ is the image of $\delta'$ by the homothety of center $1$ and ratio $\beta$.
Without loss of generality we  first deal with the case with no damping, i.e., $\beta=1$.  The discussion for  general $\beta\in (0,1]$ can be found in~\Cref{sec:thickness} . Then, the matrix appearing
in~\eqref{AVIQ} becomes
\begin{equation}\label{a:myq}
\myq:=
\begin{pmatrix} 
(1+\alpha)\nmyp & -\alpha \nmyp \\
I & 0 
\end{pmatrix}
\enspace. 
\end{equation}
\textcolor{blue}{The asymptotic rate of convergence of the sequence $(z_k)$ in the system~\eqref{AVIQ}, when it is converging, and thus of the sequence $(y_k)$ in the AVI scheme~\eqref{AVIalphabeta} is determined by the spectral radius of $\myq$.} Recall that we want to improve this asymptotic rate%of the AVI scheme~\eqref{AVIalphabeta}
, thus it suffices to find appropriate values of $\alpha$ such that the spectral radius of $\myq$ is as small as possible.

We first relate the eigenvalues of
$\myq$ with those of $\nmyp$.
We introduce the following rational function of degree $2$, defined on $\C\setminus\{\alpha/(1+\alpha)\}$ by 
$$\phi_\alpha(z):= \frac{z^2}{(1+\alpha) z - \alpha } \enspace. 
$$
The following is a standard property of block-companion matrices,
we provide the proof for completeness.
\begin{lemma}\label{SpecQ}
	If $\alpha\neq 0$ then $\lambda$ is an eigenvalue of $\myq$ if and only if there exists an eigenvalue $\delta$ of $\nmyp$ such that $\delta=\phi_\alpha(\lambda)$. %\frac{\lambda^2}{(1+\alpha)\lambda-\alpha}$.
In other words, $$\spec \myq = \phi_\alpha^{-1}(\spec \nmyp).$$
\end{lemma}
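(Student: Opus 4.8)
The plan is to show that $\lambda$ is an eigenvalue of $\myq$ exactly when the companion-type determinant factors through the rational function $\phi_\alpha$. The matrix $\myq$ is a $2\times 2$ block matrix whose blocks are simultaneously diagonalizable with $\nmyp$ (they are polynomials in $\nmyp$ together with the identity), so the natural strategy is to reduce to the scalar case by triangularizing $\nmyp$. Concretely, I would first put $\nmyp$ in upper-triangular (Schur or Jordan) form via a unitary $U$, write $\nmyp = U R U^{-1}$ with $R$ upper triangular whose diagonal entries are the eigenvalues $\delta_1,\dots,\delta_n$ of $\nmyp$, and observe that conjugating $\myq$ by $\mathrm{diag}(U,U)$ replaces each block $\nmyp$ by $R$ while leaving $I$ and $0$ unchanged. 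This makes the conjugated $\myq$ block-upper-triangular after a suitable permutation of rows and columns, so its characteristic polynomial is the product of the $2\times 2$ scalar blocks associated to each $\delta_i$.

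The key computation is then purely scalar. For an eigenvalue $\delta$ of $\nmyp$, the corresponding $2\times 2$ block contributes the determinant
\begin{equation}\label{e-scalarblock}
\det\begin{pmatrix} (1+\alpha)\delta - \lambda & -\alpha\delta \\ 1 & -\lambda \end{pmatrix}
= -\lambda\big((1+\alpha)\delta - \lambda\big) + \alpha\delta
= \lambda^2 - (1+\alpha)\delta\,\lambda + \alpha\delta \enspace.
\end{equation}
Setting this to zero and solving for $\delta$ gives $\delta\big((1+\alpha)\lambda - \alpha\big) = \lambda^2$, i.e. $\delta = \phi_\alpha(\lambda)$ precisely when $(1+\alpha)\lambda - \alpha \neq 0$. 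Thus $\lambda$ is a root of the characteristic polynomial of $\myq$ if and only if there is some eigenvalue $\delta$ of $\nmyp$ with $\delta = \phi_\alpha(\lambda)$, which is the claimed equivalence $\spec\myq = \phi_\alpha^{-1}(\spec\nmyp)$.

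The main obstacle is not conceptual but bookkeeping: I must verify that the determinant of the full $2n\times 2n$ matrix $\lambda I - \myq$ really factors as $\prod_{i} \big(\lambda^2 - (1+\alpha)\delta_i\lambda + \alpha\delta_i\big)$ and not merely that each scalar root yields an eigenvalue. The cleanest way to make this rigorous is to compute the characteristic polynomial directly using the Schur complement / block determinant formula: treating $\lambda I - \myq$ as a block matrix with invertible bottom-left block structure, one gets $\det(\lambda I - \myq) = \det\big(\lambda^2 I - (1+\alpha)\lambda\nmyp + \alpha\nmyp\big)$, and since this is a matrix polynomial in $\nmyp$ alone, its determinant equals $\prod_i \big(\lambda^2 - (1+\alpha)\lambda\delta_i + \alpha\delta_i\big)$. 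I would carry out the block-determinant reduction carefully, noting that the bottom row block $(I,\ -\lambda I)$ lets us perform block row/column operations cleanly, and handle the degenerate case $(1+\alpha)\lambda = \alpha$ separately (there $\lambda^2 = 0$ forces $\lambda = 0$, which is excluded once $\alpha\neq 0$ since then $\delta = 0$ would be needed and $\phi_\alpha$ is defined off that pole). Finally I would note that the hypothesis $\alpha \neq 0$ is what guarantees $\phi_\alpha$ has genuine degree $2$ so that each $\delta$ pulls back to two preimages, matching the dimension count $2n$.
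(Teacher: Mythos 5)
Your proof is correct, but it takes a genuinely different route from the paper's. The paper argues directly with eigenvectors: it takes $\myq \left(\begin{smallmatrix} z_1 \\ z_0\end{smallmatrix}\right)=\lambda \left(\begin{smallmatrix} z_1 \\ z_0\end{smallmatrix}\right)$, eliminates $z_1=\lambda z_0$ to get $(\lambda(1+\alpha)-\alpha)\nmyp z_0=\lambda^2 z_0$, checks that $z_0\neq 0$ and that $\lambda(1+\alpha)-\alpha\neq 0$ (this is exactly where $\alpha\neq 0$ is used, since the pole $\alpha/(1+\alpha)$ is then nonzero and $\lambda^2 z_0=0$ would force $z_0=0$), and concludes $\nmyp z_0=\phi_\alpha(\lambda)z_0$; the converse is the same construction run backwards. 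You instead compute the characteristic polynomial, $\det(\lambda I-\myq)=\det\bigl(\lambda^2 I-((1+\alpha)\lambda-\alpha)\nmyp\bigr)=\prod_i\bigl(\lambda^2-(1+\alpha)\delta_i\lambda+\alpha\delta_i\bigr)$, via the commuting-block determinant identity (the bottom blocks $-I$ and $\lambda I$ commute, so the Schur-complement reduction is legitimate, with the $\lambda=0$ case handled by continuity or directly), and your treatment of the degenerate value $\lambda=\alpha/(1+\alpha)$ — where the factor collapses to $\lambda^2\neq 0$ — is the correct counterpart of the paper's non-vanishing check. Your scalar quadratic $\lambda^2-(1+\alpha)\delta\lambda+\alpha\delta=0$ is literally the paper's equation~\eqref{lambdaEq}. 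What your route buys is slightly more: the factorization gives the spectrum of $\myq$ \emph{with algebraic multiplicities} and makes the two-to-one preimage count under $\phi_\alpha$ explicit, whereas the paper's eigenvector argument is more elementary (no determinant identities, no triangularization) and is the one that generalizes verbatim to \Cref{lem-d-SpecQ} for the degree-$d$ block companion matrix. The initial Schur-form digression in your write-up is dispensable once you commit to the determinant identity, since $\det$ of a polynomial in $\nmyp$ already factors over $\spec\nmyp$ without triangularizing.
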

\ifX
\begin{proof}
  Let $\lambda$ be an eigenvalue of $\myq$.
  There exists a non-zero vector $\left(\begin{smallmatrix} z_1 \\	z_0
  \end{smallmatrix}\right) \in \Rset^{2n} $ such that $\myq \left(\begin{smallmatrix}  z_1 \\	z_0   \end{smallmatrix}\right) = \lambda
  \left(\begin{smallmatrix} z_1 \\	z_0 \end{smallmatrix}\right)$.
  This is equivalent to
	%	$\left\{\begin{matrix}
	%	(1+\alpha)\nmyp z_1-\alpha \nmyp z_0=\lambda z_1\\
	%	z_1=\lambda z_0 
	%	\end{matrix}\right.$,
  %% \left\{
  %%       \begin{array}{ll}
  $	(1+\alpha)\nmyp z_1-\alpha \nmyp z_0=\lambda z_1$
  and $z_1=\lambda z_0 $, or equivalently
	%% \end{array}
	%% \right.$,
	%% and then equivalent to 
  $
  %% \left\{
  %%       \begin{array}{ll}
  (\lambda(1+\alpha)-\alpha) \nmyp z_0=\lambda^2 z_0$
  and
  %\\
$	z_1=\lambda z_0 $.
	%% \end{array}
	%% \right. .$
  We have $z_0\neq 0$, because otherwise $z_1=\lambda z_0=0$. We notice that
  %% $\left(\begin{smallmatrix} z_1 \\	z_0 \end{smallmatrix}\right)=\left(\begin{smallmatrix} \lambda z_0 \\	z_0 \end{smallmatrix}\right)=0$, and that
  $\lambda(1+\alpha)-\alpha \neq 0$, because otherwise $\lambda=\frac{\alpha}{1+\alpha}\neq 0$ and $\lambda^2 z_0=0$, then $z_0=0$, which is not true. Therefore $\nmyp z_0=\phi_\alpha(\lambda) z_0$ which allows to conclude.
\end{proof}
\fi

\subsubsection{The case of real eigenvalues}

We now explain how to select $\alpha$ optimally. 
We first suppose that the spectrum of $\nmyp$ is real and nonnegative, i.e., $\spec\nmyp \subset [0,1-\epsilon]$ for some $\epsilon \in (0,1)$. 
  %% We look for the choice of $\alpha$ that minimizes the spectral radius $\rho(\myq)=\max\{|\lambda|, \lambda\in \spec(\myq)\}$ of the matrix $\myq$.
%This problem can be formulated as follows:
%\begin{align}\label{a}
%\underset{\alpha\in [0,1]}{\text{argmin }} \rho(\myq) \enspace.
%\end{align}
  We denote by $\ball(z,r)$ the closed disk of the complex plane with center $z$ and
  radius $r$.
  We consider the minimax problem
  \begin{align}
    \min_{ \alpha>0}\;  \max_{P :\, \spec P\subset [0,1-\epsilon]} \rho(Q_\alpha)
    \label{e-minimax}
  \end{align}
  where $\rho$ denotes the spectral radius,
  and the matrix $Q_\alpha$, depending on $P$, is defined
  by~\eqref{a:myq}.

\begin{lemma}\label{lem-optalpha}
  The solution $\alpha^*$ of the minimax problem~\eqref{e-minimax}
  is given by
  %% If $\spec\nmyp \subset [0,1-\epsilon]$, then the choice of $\alpha\in [0,1]$ that minimizes the spectral radius of $\myq$ is
  \begin{align}\label{a:alphastar}\alpha^*=\frac{1-\sqrt{\epsilon}}{1+\sqrt{\epsilon}}.\end{align}
  It guarantees that $\spec Q_{\alpha^*} \subset \ball(0,1-\sqrt{\epsilon})$,
  for all matrices $P$ such that $\spec\nmyp \subset [0,1-\epsilon]$.
\end{lemma}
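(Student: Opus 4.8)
The plan is to use \Cref{SpecQ} to turn the computation of $\rho(Q_\alpha)$ into a root-location problem for a one-parameter family of scalar quadratics. By \Cref{SpecQ}, every eigenvalue $\lambda$ of $\myq$ satisfies $\phi_\alpha(\lambda)=\delta$ for some $\delta\in\spec\nmyp\subset[0,1-\epsilon]$; clearing the denominator of $\phi_\alpha$, this says precisely that $\lambda$ is a root of
\begin{equation*}
\lambda^2-\delta(1+\alpha)\lambda+\delta\alpha=0\enspace.
\end{equation*}
Thus the whole problem is driven by the two roots of this quadratic as $\delta$ ranges over $[0,1-\epsilon]$ and $\alpha$ over $(0,+\infty)$. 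For fixed $\delta,\alpha$ the two roots have product $\delta\alpha\geq0$ and sum $\delta(1+\alpha)\geq0$, with discriminant $\Delta=\delta\big(\delta(1+\alpha)^2-4\alpha\big)$.

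First I would show that the inner maximum in~\eqref{e-minimax} is always attained at $\delta=1-\epsilon$. When $\Delta\leq0$ the roots are complex conjugate with common modulus $\sqrt{\delta\alpha}$, which is non-decreasing in $\delta$; when $\Delta>0$ the roots are real and nonnegative, and the dominant one is non-decreasing in $\delta$ (each summand in the explicit root formula increases, the radicand being increasing once $\Delta\geq0$). Hence for every fixed $\alpha$ the maximum of $\rho(\myq)$ over admissible $P$ equals the modulus of the dominant root evaluated at $\delta=1-\epsilon$, and~\eqref{e-minimax} reduces to minimizing that single scalar quantity over $\alpha>0$.

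Next I would split into the complex and real regimes according to the sign of $\Delta$ at $\delta=1-\epsilon$. Solving $(1-\epsilon)(1+\alpha)^2\leq4\alpha$ yields a quadratic inequality in $\alpha$ whose roots are exactly $\alpha^*=(1-\sqrt{\epsilon})/(1+\sqrt{\epsilon})$ and $1/\alpha^*$, so the roots are complex precisely for $\alpha\in[\alpha^*,1/\alpha^*]$. On this interval the modulus equals $\sqrt{(1-\epsilon)\alpha}$, which is increasing in $\alpha$, hence minimized at $\alpha^*$, where a direct computation gives $\sqrt{(1-\epsilon)\alpha^*}=1-\sqrt{\epsilon}$. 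It then remains to handle the real regime $\alpha\in(0,\alpha^*)\cup(1/\alpha^*,+\infty)$ and check that the dominant real root $r_+(\alpha)$ stays above $1-\sqrt{\epsilon}$. I would do this either by implicit differentiation, which gives $r_+'(\alpha)=\bar\delta\,(r_+-1)/\sqrt{\Delta}$ with $\bar\delta=1-\epsilon$ (negative on $(0,\alpha^*)$, since there $r_+\in(1-\sqrt{\epsilon},1-\epsilon)\subset(0,1)$), or by evaluating at the endpoints ($r_+\to1-\epsilon>1-\sqrt{\epsilon}$ as $\alpha\to0^+$, $r_+=1+\sqrt{\epsilon}$ at $\alpha=1/\alpha^*$, and $r_+\to+\infty$ as $\alpha\to+\infty$) together with monotonicity.

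Combining these shows that $\max_P\rho(\myq)\geq1-\sqrt{\epsilon}$ for every $\alpha>0$, with equality exactly at $\alpha=\alpha^*$; this simultaneously identifies $\alpha^*$ as the minimizer in~\eqref{e-minimax} and gives $\spec Q_{\alpha^*}\subset\ball(0,1-\sqrt{\epsilon})$. I expect the main obstacle to be the real-root regime: one must confirm that the dominant root never dips below $1-\sqrt{\epsilon}$, and the cleanest route is the implicit-differentiation argument, which requires tracking the sign of $r_+-1$ and keeping careful account of which root is dominant across the regime boundary $\alpha=\alpha^*$.
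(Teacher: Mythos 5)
Your proposal is correct and follows essentially the same route as the paper: reduce via \Cref{SpecQ} to the scalar quadratic $\lambda^2-(1+\alpha)\delta\lambda+\alpha\delta=0$, show the inner maximum is attained at $\delta=1-\epsilon$ by the same complex/real discriminant case split, and then minimize over $\alpha$ using monotonicity of $\sqrt{(1-\epsilon)\alpha}$ in the complex regime and of the dominant real root in the real regime. The only differences are cosmetic: you sign the derivative of the dominant root by implicit differentiation where the paper differentiates the explicit root formula, and you explicitly treat $\alpha>1/\alpha^*$, a range the paper's argument leaves implicit.
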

%\ifX
\begin{proof}
	By~\Cref{SpecQ},  $\lambda \in \spec\myq$ if and only if there exists $\delta \in \spec\nmyp\subset [0,1-\epsilon]$, such that $\delta=\frac{\lambda^2}{(1+\alpha)\lambda-\alpha}$. This can be written as a second degree equation in $\lambda$:
	\begin{align}\label{lambdaEq}
	\lambda^2-(1+\alpha)\delta \lambda+\alpha \delta=0.
	\end{align}
	The discriminant of this equation is $\Delta=\delta^2(1+\alpha)^2-4\alpha \delta=\delta(1+\alpha)^2(\delta-\alpha')$, where $\alpha':=\frac{4 \alpha}{(1+\alpha)^2}$.  We note that the function $\alpha\mapsto \frac{4 \alpha}{(1+\alpha)^2}$ is a strictly increasing bijection from $[0,1]$ to itself, with inverse function $\alpha\mapsto \frac{1-\sqrt{1-\alpha}}{1+\sqrt{1-\alpha}}$. Hence  $\alpha'\geq 1-\epsilon$ if and only if $\alpha \geq \alpha^*$.
	\begin{claim}\label{claim1}
		For fixed $\alpha$, the maximal modulus of the solutions of%$\lambda$ satisfying
~\eqref{lambdaEq} is increasing with $\delta$.%$\delta>0$.
	\end{claim}
	\begin{proof}[Proof of Claim~\ref{claim1}]
		If $\Delta\leq 0$, i.e.\ $\delta\leq \alpha'$, then the solutions of (\ref{lambdaEq}) are complex conjugate $\lambda_{\pm}=\frac{1}{2}(\delta(1+\alpha)\pm i \sqrt{\delta(1+\alpha)^2(\alpha'-\delta)})$, and we have $\lambda_{+} \lambda_{-}=\alpha \delta$. Then $|\lambda_+|=|\lambda_-|=\sqrt{\alpha \delta}$, which is increasing in $\delta$.
		
		If $\Delta\geq 0$, i.e.\ $\delta\geq \alpha'$, the solutions of (\ref{lambdaEq}) are real: $$\lambda_{\pm}=\frac{1}{2}(\delta(1+\alpha)\pm\sqrt{\delta(1+\alpha)^2(\delta-\alpha')}).$$ Then $\max\left(|\lambda_{+}|, |\lambda_{-}|\right)=\frac{1}{2}(\delta(1+\alpha)+\sqrt{\delta(1+\alpha)^2(\delta-\alpha')})$ is strictly increasing in $\delta$.
	\end{proof}
	
	\Cref{claim1} shows that
	 \begin{align}\label{a:maxPdelta}
    \max_{P :\, \spec P\subset [0,1-\epsilon]} \rho(Q_\alpha) =\max\left\{ |\lambda|:  \lambda^2-(1+\alpha)(1-\epsilon) \lambda+\alpha (1-\epsilon)=0.\right\}
  \end{align}
The discriminant of the second order equation in~\eqref{a:maxPdelta} is $$\Delta=(1-\epsilon)^2(1+\alpha)^2-4\alpha (1-\epsilon)=(1-\epsilon)(1+\alpha)^2(1-\epsilon-\alpha').$$
	If $\alpha \geq \alpha^*$, then $\alpha'\geq  1-\epsilon$ and $\Delta\leq 0$.   In this case  $|\lambda_+|=|\lambda_-|=\sqrt{\alpha (1-\epsilon)}$ is increasing in $\alpha \in \left[\frac{1-\sqrt{\epsilon}}{1+\sqrt{\epsilon}},1\right]$. If $\alpha \leq \alpha^* $, then $\alpha'\leq  1-\epsilon$ and $\Delta\geq 0$. 
In this case $$\max\left(|\lambda_+|, |\lambda_-|\right)=\frac{1}{2}\left((1-\epsilon)(1+\alpha)+\sqrt{(1-\epsilon)^2(1+\alpha)^2-4\alpha (1-\epsilon)}\right).$$
        %% By studying the function the variations of $f$, we can show (claim~\ref{fdecreasing}) that $f$ is strictly decreasing on $[0,\frac{1-\sqrt{\epsilon}}{1+\sqrt{\epsilon}}]$.
	\begin{claim}\label{fdecreasing} %
		The function $\digamma:\alpha \rightarrow  (1-\epsilon)(1+\alpha)+\sqrt{(1-\epsilon)^2(1+\alpha)^2-4\alpha (1-\epsilon)}$ is strictly decreasing on $\left[0,\frac{1-\sqrt{\epsilon}}{1+\sqrt{\epsilon}}\right ]$.
	\end{claim}
	\begin{proof}[Proof of Claim~\ref{fdecreasing}]
		We have $$\digamma'(\alpha)=1-\epsilon+\frac{2(1-\epsilon)^2(1+\alpha)-4(1-\epsilon)}{2\sqrt{(1-\epsilon)^2(1+\alpha)^2-4\alpha (1-\epsilon)}}=\frac{ (1-\epsilon)h(\alpha
			)}{\sqrt{(1-\epsilon)^2(1+\alpha)^2-4\alpha (1-\epsilon)}},$$
		where $h(\alpha)=\sqrt{(1-\epsilon)^2(1+\alpha)^2-4\alpha ( 1-\epsilon)}+(1-\epsilon)(1+\alpha)-2.$ It is easy to check that
		$h(\alpha)=\sqrt{(2-(1-\epsilon)(1+\alpha))^2-4\epsilon}-(2-(1-\epsilon)(1+\alpha))<0$ for all $\alpha \in [0,\frac{1-\sqrt{\epsilon}}{1+\sqrt{\epsilon}}]$.  Since $2-(1-\epsilon)(1+\alpha)\geq 0$ for all $\alpha \in \left[0,\frac{1-\sqrt{\epsilon}}{1+\sqrt{\epsilon}}\right]$, we deduce that $h(\alpha) <0$ and hence $\digamma'(\alpha)<0$ for all $\alpha \in \left[0,\frac{1-\sqrt{\epsilon}}{1+\sqrt{\epsilon}}\right]$.  
	\end{proof}
	We conclude that the best choice of $\alpha$ which minimizes the
        maximum of the spectral radius of $Q_{\alpha}$ corresponding to all $P$
        with  spectrum in $[0,1-\epsilon]$ is $\alpha^*$ given in~\eqref{a:alphastar}, and it allows to have $\spec Q_{\alpha^*} \subset \ball(0,1-\sqrt{\epsilon})$ for all such matrix $P$. %$\max\left(|\lambda_+|, |\lambda_-|\right)\leq 1-\sqrt{\epsilon}$ for all $\delta \in [0,1-\epsilon]$.
	%, i.e. $\spec(\myq) \subset \ball(0,1-\sqrt{\epsilon})$.
\end{proof}
%\fi

\begin{remark}\rm
If $P$ is symmetric, then
the quadratic function $f$ in Remark~\ref{rem:quadraticf} is a strongly convex function with $L=1$ and $\mu=\epsilon$. 
In this special case the $\alpha^*$ in Lemma~\ref{lem-optalpha} coincides with the inertial parameter~\eqref{a:alpha} in Nesterov's constant-step method.
The same choice of step has been proposed, for nonsymmetric matrices with real
spectrum, in~\cite{iutzeler}.
%scheme~\cite{Nesterovbook}.
\end{remark}
%% Considerations similar to Lem
%% ConAn oswas stated as~\cite[Section~3.2.2]{iutzeler}.
%% We provide the proof of the lemma, for the
%% convenience of the reader. 
%% %% 

%\todo[color=magenta]{ZQ:  similar results in~\cite{iutzeler,julien}?}

%we recover the value of $\alpha$ in Nesterov's accelerated gradient descent scheme
%applied to the minimization of a strongly convex function, see the comment after Theorem 2.2.3 in~\cite{Nesterovbook}.
%\todo[color=blue!30]{SG: I edited the sentence trying to incorporate Zheng's comment? }
%\todo{ZQ: This corresponds to scheme 2.2.8 in~\cite{Nesterovbook}, see the comment right after Theorem 2.2.3. Note that this value is for generaly strongly convex minimization.}
\subsubsection{The case of complex eigenvalues}
%Now we go back to our initial assumption that
%$\spec\nmyp \subset \ball(0,1-\epsilon)$: 
Now we do not assume any more that $\nmyp$ has a real spectrum.
%of $\nmyp$ to be real.
We will show that
the best acceleration rate achievable in the case of a real spectrum, obtained by choosing $\alpha=\alpha^*$ as in Lemma~\ref{lem-optalpha},
%% \[ \alpha=\frac{1-\sqrt{\epsilon}}{1+\sqrt{\epsilon}}
%% \enspace ,
%% \]
is still achievable in the case of a complex spectrum satisfying a geometric condition.
%% We say that a sequence $x_k$ converges to $x$ with an asymptotic (geometric) rate $\gamma<1$ if $\limsup_k \|x_k-x\|^{1/k}\leq \gamma$. 
%% Recall thatAs we saw the best choice of the acceleration parameter $\alpha$ in the nonnegative real case, is $\alpha=\frac{1-\sqrt{\epsilon}}{1+\sqrt{\epsilon}}$. In the following we fix this value for $\alpha$, and we will look for the biggest accelerable set $\Sigma \subset \ball(0,1-\epsilon)$, i.e.\ such that if $\spec(\nmyp) \subset \Sigma$ then $\spec(\myq) \subset \ball(0,1-\sqrt{\epsilon})$. The following theorem gives an answer to this question with a explicit formula for the accelerable region $\Sigma$.

Consider the following simple closed curve $\Gamma_{\epsilon}$  defined by the parametric equation:
	\[
	\theta \mapsto \frac{(1-\epsilon)\e^{2i \theta}}{2 \e^{i \theta}-1} \quad, \quad \theta \in (0,2 \pi] \enspace.
	\]
	Denote by $\Sigma_{\epsilon}$ the compact set delimited by the curve $\Gamma_{\epsilon}$. We show in Figure~\ref{fig:sigma0} the curve $\Gamma_0$ and the enclosed region $\Sigma_0$. It is easy to see that 
	$\Gamma_{\epsilon}$ (resp. $\Sigma_{\epsilon}$) is a scaling of $\Gamma_0$ (resp. $\Sigma_{0}$) by $1-\epsilon$.  
	Moreover, we have
	\begin{align}\label{a:Sig}\left |\frac{e^{2i \theta}}{2 e^{i \theta}-1}\right |=\frac{1}{|2 e^{i \theta}-1|}\leq \frac{1}{|2 e^{i \theta}|-1}=1, \end{align}
	and thus
 the curve $\Gamma_{\epsilon}$ is included in the  disk  $\ball(0,1-\epsilon)$. 
	It follows that  \begin{align}\label{a:sigincl}\Sigma_{\epsilon} \subset \ball(0,1-\epsilon).\end{align}

	\begin{figure}[htbp]
	\label{fig:sigma0}
	\centering
	\subfigure[]{
	\label{sigma0a}
\includegraphics[width=0.45\textwidth]{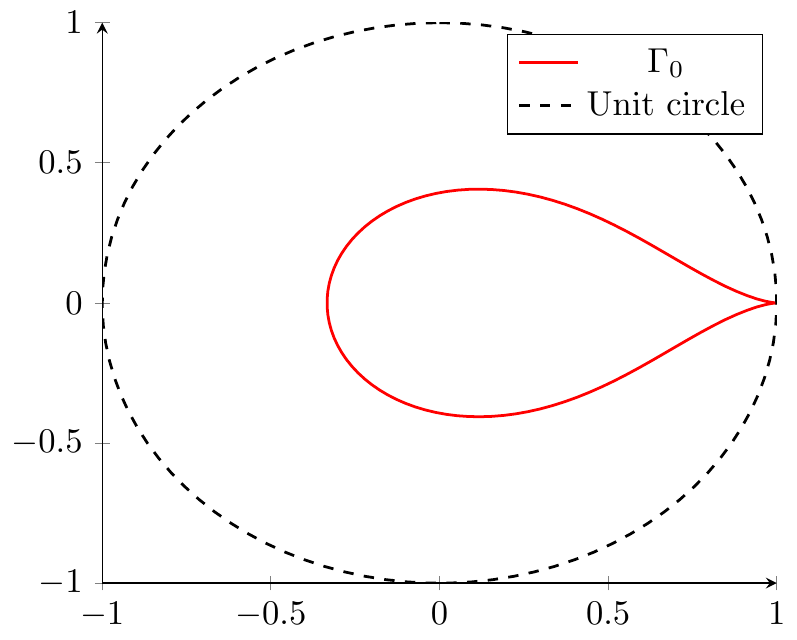}
}
\subfigure[]{
\label{sigma0b}
\includegraphics[width=0.45\textwidth]{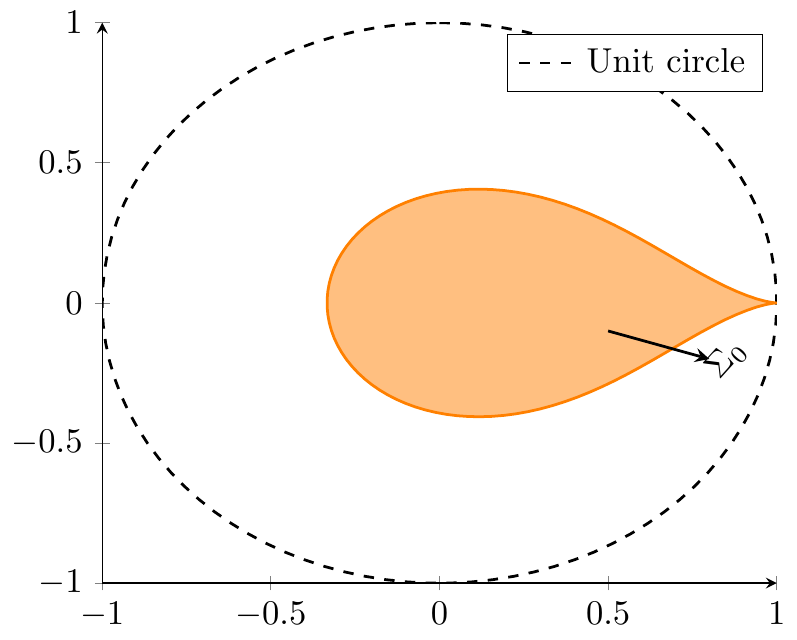}
}
\caption{Illustration of the curve $\Gamma_0$ (\Cref{sigma0a}) and its enclosed region $\Sigma_0$ (\Cref{sigma0b}).}
\end{figure}

% For $\epsilon=0$ we denote these two sets $\Sigma$ and $\Gamma$ respectively.
%\todo{SG,OS,ZQ: delete the statement that it is the largest subset and add a comment after the proof of the theorem referring's to Zheng's proof explaining that it will be shown that the subset is optimal}

\begin{theorem}\label{ThmSigma}%\todo{OS: should we still explicitly mention that $\beta=1$ as Reviewer 1 suggested for~\Cref{ThmSigma,ThmSigmastronger}?}
\textcolor{blue}{Let $\epsilon \in (0,1)$, $\nmyp$ be a $n\times n$ complex matrix and $\myq$ be defined as in~\eqref{a:myq} with}
$\alpha=(1-\sqrt{\epsilon})/(1+\sqrt{\epsilon})$
  %\frac{}{1+\sqrt{\epsilon}}$
  . %Then, the subset $\Sigma_\epsilon$ of the complex plane, such that 
  If $\spec \nmyp \subset \Sigma_\epsilon$, then $\spec \myq \subset \ball(0,1-\sqrt{\epsilon})$.
\end{theorem}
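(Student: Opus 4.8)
The plan is to use the spectral correspondence $\spec\myq=\phi_\alpha^{-1}(\spec\nmyp)$ furnished by \Cref{SpecQ} to reduce the statement to one purely about the rational map $\phi_\alpha$: it suffices to prove that the full preimage $\phi_\alpha^{-1}(\Sigma_\epsilon)$ is contained in the closed disk $\ball(0,1-\sqrt\epsilon)$, since then $\spec\myq=\phi_\alpha^{-1}(\spec\nmyp)\subset\phi_\alpha^{-1}(\Sigma_\epsilon)\subset\ball(0,1-\sqrt\epsilon)$. The first and guiding step is the identification that $\Gamma_\epsilon$ is exactly the image under $\phi_\alpha$ of the circle of radius $r:=1-\sqrt\epsilon$. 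Indeed, with $\alpha=(1-\sqrt\epsilon)/(1+\sqrt\epsilon)$ one has $1+\alpha=2/(1+\sqrt\epsilon)$, and substituting $\lambda=re^{i\theta}$ into $\phi_\alpha(\lambda)=\lambda^2/((1+\alpha)\lambda-\alpha)$ a direct simplification gives $\phi_\alpha(re^{i\theta})=(1-\epsilon)e^{2i\theta}/(2e^{i\theta}-1)$, which is precisely the parametrization of $\Gamma_\epsilon$. Thus $\phi_\alpha$ carries $\partial\ball(0,r)$ onto $\Gamma_\epsilon=\partial\Sigma_\epsilon$.

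Next I would count preimages. For each $\delta\in\C$ the equation $\phi_\alpha(z)=\delta$ is the monic quadratic $q_\delta(z):=z^2-(1+\alpha)\delta z+\alpha\delta=0$, whose two roots (with multiplicity) depend continuously on $\delta$. Let $N(\delta)$ be the number of these roots lying in the open disk $\{|z|<r\}$. A root can reach the circle $\partial\ball(0,r)$ only when $\phi_\alpha$ maps a point of that circle to $\delta$, that is, only when $\delta\in\phi_\alpha(\partial\ball(0,r))=\Gamma_\epsilon$; hence $N$ is locally constant, and therefore constant, on each connected component of $\C\setminus\Gamma_\epsilon$. The essential point (this is the main obstacle) is to pin down the value of this constant on the interior of $\Sigma_\epsilon$: the easy bound on the product of the roots, $|\lambda_1\lambda_2|=\alpha|\delta|$, only shows that the two roots cannot both lie outside the disk, and genuinely topological information is needed to force \emph{both} inside. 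I would extract it by evaluating at the origin: $q_0(z)=z^2$ has a double root at $0$, so $N(0)=2$, and a winding-number computation places $0$ in the interior of $\Sigma_\epsilon$. Indeed, as $\theta$ runs over $(0,2\pi]$ the numerator $e^{2i\theta}$ contributes winding $+2$ about $0$, while the denominator $2e^{i\theta}-1$ traces the circle of radius $2$ about $-1$, which encloses the origin and so contributes $+1$, giving total winding $+1\neq0$. Since $\Sigma_\epsilon$ is a Jordan domain its interior is connected, so $N\equiv2$ there: for every $\delta$ in the interior of $\Sigma_\epsilon$, both roots of $q_\delta$ lie in the open disk $\{|z|<r\}$.

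It then remains to treat the boundary $\delta\in\Gamma_\epsilon$, which I would handle either by continuity of the roots from the interior case, or, more concretely, by the product estimate, which doubles as a check. If $\delta\in\Gamma_\epsilon$ then $\delta=\phi_\alpha(\lambda_1)$ for some $\lambda_1$ with $|\lambda_1|=r$, and Vieta's formula for $q_\delta$ gives $\lambda_1\lambda_2=\alpha\delta$, so $|\lambda_2|=\alpha|\delta|/r$. Using $|\delta|\le1-\epsilon$ (from \eqref{a:sigincl}, i.e.\ $\Sigma_\epsilon\subset\ball(0,1-\epsilon)$) and the identity $\alpha(1-\epsilon)/r=(1-\sqrt\epsilon)^2/(1-\sqrt\epsilon)=1-\sqrt\epsilon=r$, one obtains $|\lambda_2|\le r$, so both roots lie in the closed disk. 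Combining the interior and boundary cases yields $\phi_\alpha^{-1}(\Sigma_\epsilon)\subset\ball(0,1-\sqrt\epsilon)$, and the conclusion $\spec\myq\subset\ball(0,1-\sqrt\epsilon)$ follows from \Cref{SpecQ}.
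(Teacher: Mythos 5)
Your proof is correct, but it follows a genuinely different route from the paper's. The paper argues by contraposition in the $\lambda$-plane: it takes an eigenvalue $\lambda=r(1-\sqrt{\epsilon})e^{i\bar\theta}$ of $\myq$ with $r>1$, notes that the image curve $\theta\mapsto\delta_r(\theta)$ starts outside $\Sigma_\epsilon$ at $\theta=0$, and then shows by an explicit algebraic computation (solving a quadratic for the unknown $v=e^{i\theta}$) that the curve $\Gamma_\epsilon(r)$ never meets $\Gamma_\epsilon$, so by connectedness $\delta_r(\bar\theta)$ must lie outside $\Sigma_\epsilon$. You instead work directly in the $\delta$-plane with an argument-principle-style root count: the number $N(\delta)$ of roots of $q_\delta$ in the open disk is locally constant off $\Gamma_\epsilon=\phi_\alpha(\partial\ball(0,1-\sqrt{\epsilon}))$, the anchor $N(0)=2$ combined with the winding-number computation placing $0$ in the interior forces $N\equiv 2$ on the interior of $\Sigma_\epsilon$, and the boundary is handled by Vieta's formula together with \eqref{a:sigincl}. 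Both proofs rest on the same two pillars (the identification of $\Gamma_\epsilon$ as the image of the critical circle, and the Jordan-curve structure of $\Sigma_\epsilon$), but your key computations are different and arguably lighter: the winding-number count and the product identity $\alpha(1-\epsilon)/(1-\sqrt{\epsilon})=1-\sqrt{\epsilon}$ replace the paper's non-intersection computation for the family of curves $\Gamma_\epsilon(r)$. One thing the paper's computation buys that yours does not is the stronger statement behind \Cref{ThmSigmastronger} (the curves $\Gamma_\epsilon(r)$ are pairwise disjoint, giving a quantitative rate when the spectrum only lies in the larger region $\Sigma_\epsilon(r)$); conversely, your direct root-counting framework yields containment of the \emph{full} preimage $\phi_\alpha^{-1}(\Sigma_\epsilon)$ and is closer in spirit to the set $\cS$ analysed for general degree $d$ in \Cref{subsec:proof}.
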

%\ifX

	\begin{proof}

	  To show that $\spec\nmyp\subset \Sigma_\epsilon \Rightarrow \spec\myq\subset \mathcal{B}\left(0,1-\sqrt{\epsilon}\right)$, we will prove the contrapositive
          \begin{align}\label{a:caoC}\spec\myq \cap \C \setminus \mathcal{B}\left(0,1-\sqrt{\epsilon}\right) \neq \emptyset \Rightarrow \spec\nmyp\cap \C \setminus \Sigma_\epsilon \neq \emptyset.\end{align}
	
	We consider an eigenvalue $\lambda\in \spec\myq \cap \C \setminus \mathcal{B}\left(0,1-\sqrt{\epsilon}\right)$ so that $\lambda=r\left(1-\sqrt{\epsilon}\right)e^{i \bar\theta}$ for some $\bar \theta \in (0,2 \pi]$ and $r>1$. The associated eigenvalue of $\nmyp$ is $$\delta_r(\bar \theta):=\frac{\lambda^2}{(1+\alpha)\lambda-\alpha}=\frac{\left(1-\sqrt{\epsilon}\right)^2 r^2 e^{2i  \bar\theta}}{\frac{2}{1+\sqrt{\epsilon}} r\left(1-\sqrt{\epsilon}\right)e^{i  \bar\theta} -\frac{1-\sqrt{\epsilon}}{1+\sqrt{\epsilon}}}=\frac{(1-\epsilon) r^2 e^{2i  \bar\theta}}{2 r e^{i  \bar\theta}-1}.$$ 
		It is easy to check from $r>1$  that  $$ \left | \delta_r(0)\right |= \frac{(1-\epsilon)r^2}{2 r-1}>1-\epsilon ,$$
		which together with~\eqref{a:sigincl} implies  that
		$$
		\delta_r(0) \notin \Sigma_{\epsilon}.
		$$
		 Suppose that $\delta_r(\bar\theta) \in \Sigma_\epsilon$. Since the curve $\Gamma_\epsilon$ is the boundary of the compact set $\Sigma_\epsilon$,   there must be a $\theta\in (0,2\pi)$ such that
		  $$
		  \delta_r(\theta) \in \Gamma_\epsilon.
		  $$
		  In other words,  there is $u,v\in \C$ such that $|u|=|v|=1$ and $(1-\epsilon)\frac{u^2}{2u-1}=(1-\epsilon)\frac{r^2 v^2}{2 r v-1}$. Then $r^2(2u-1)v^2-2ru^2v+u^2=0$. We consider $v$ as the unknown variable in this equation. The discriminant is $\Delta=4r^2u^2(u-1)^2$ and then $$v \in \left\{\frac{2ru^2\pm 2ru(u-1)}{2r^2(2u-1)}\right\} =\left\{\frac{u}{r},\frac{u}{r(2u-1)}\right\}.$$
	Since $|u|=|v|$, it is impossible that  $v=\frac{u}{r}$. If $v=\frac{u}{r(2u-1)}$, then by taking the module we have $|2u-1|=\frac{1}{r}$, which is absurd because $|2u-1|\geq |2u|-1=1>\frac{1}{r}$.  We thus conclude that 	 $\delta_r(\bar\theta)  \in \C \setminus \Sigma_\epsilon$ and~\eqref{a:caoC} is proved.
\end{proof}

\begin{remark}\rm
In Theorem~\ref{dSigmaAcc}, we will give an analysis for acceleration of  arbitrary degree $d\geq 2$, which recovers   Theorem~\ref{ThmSigma} for the case $d=2$ and  in addition shows that  if $\spec \myq \subset \ball(0,1-\sqrt{\epsilon})$, then $\spec \nmyp \subset \Sigma_\epsilon$.
\end{remark}

%%\begin{remark}\rm
%%An ingredient of the proof of Theorem~\ref{ThmSigma} is
%%to show that for any $r>1$, the simple closed curve $\Gamma_\epsilon (r)$,  defined by the parametric equation:
%%$$\theta \mapsto  \delta_r(\theta)% \frac{(1-\epsilon) r^2 e^{2i \theta}}{2 r e^{i \theta}-1}
%%,\enspace \theta\in (0,2\pi],$$
%%does  not intersect with the curve $\Gamma_\epsilon$. 
%%\end{remark}
For any $r\geq 1$, denote by $\Gamma_\epsilon (r)$ the simple closed curve defined by the parametric equation $\theta \mapsto  \delta_r(\theta)% \frac{(1-\epsilon) r^2 e^{2i \theta}}{2 r e^{i \theta}-1}
,\theta\in (0,2\pi]$, and denote by  $ \Sigma_\epsilon(r)$ the region enclosed by $\Gamma_\epsilon (r)$.
We have the following stronger result.

\begin{theorem}\label{ThmSigmastronger}
\textcolor{blue}{Let $\epsilon \in (0,1)$, $\nmyp$ be a $n\times n$ complex matrix, $\myq$ be defined as in~\eqref{a:myq} with}
 $\alpha=(1-\sqrt{\epsilon})/(1+\sqrt{\epsilon})$ and $r\geq 1$.
 If $\spec \nmyp \subset \Sigma_\epsilon(r)$, then $\spec \myq \subset \ball(0, r(1-\sqrt{\epsilon}))$.
\end{theorem}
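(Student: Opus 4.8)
The plan is to follow the contrapositive strategy used in the proof of \Cref{ThmSigma}, of which this statement is the exact generalization: since $\Gamma_\epsilon(1)=\Gamma_\epsilon$ and $\Sigma_\epsilon(1)=\Sigma_\epsilon$, the case $r=1$ recovers \Cref{ThmSigma}. Assume $\spec\myq\not\subset\ball(0,r(1-\sqrt{\epsilon}))$ and pick $\lambda\in\spec\myq$ with $|\lambda|>r(1-\sqrt{\epsilon})$, writing $\lambda=s(1-\sqrt{\epsilon})e^{i\bar\theta}$ with $s>r$ and $\bar\theta\in(0,2\pi]$. Since $\alpha=(1-\sqrt{\epsilon})/(1+\sqrt{\epsilon})>0$, \Cref{SpecQ} produces the associated eigenvalue $\delta_s(\bar\theta)=\phi_\alpha(\lambda)=(1-\epsilon)s^2e^{2i\bar\theta}/(2se^{i\bar\theta}-1)$ of $\nmyp$, and the whole task reduces to showing $\delta_s(\bar\theta)\notin\Sigma_\epsilon(r)$.

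First I would exhibit a reference point of the curve $\Gamma_\epsilon(s)$ that lies outside $\Sigma_\epsilon(r)$, playing the role that $\delta_r(0)\notin\ball(0,1-\epsilon)$ played in \Cref{ThmSigma}. From $|2re^{i\theta}-1|\geq 2r-1$ one obtains $|\delta_r(\theta)|\leq (1-\epsilon)r^2/(2r-1)$ for every $\theta$, so $\Gamma_\epsilon(r)$, and hence the compact region $\Sigma_\epsilon(r)$ it encloses, is contained in $\ball(0,R_r)$ with $R_r:=(1-\epsilon)r^2/(2r-1)$. Because $t\mapsto t^2/(2t-1)$ is strictly increasing on $[1,\infty)$ and $s>r\geq 1$, the real point $\delta_s(0)=(1-\epsilon)s^2/(2s-1)$ satisfies $\delta_s(0)>R_r$, so $\delta_s(0)\notin\ball(0,R_r)\supseteq\Sigma_\epsilon(r)$.

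The key step, which I expect to be the main obstacle, is to show that $\Gamma_\epsilon(s)$ never meets the boundary $\Gamma_\epsilon(r)=\partial\Sigma_\epsilon(r)$, i.e.\ $\delta_s(\bar\theta)\notin\Gamma_\epsilon(r)$ for all $\bar\theta$. Setting $v=e^{i\bar\theta}$ and $u=e^{i\theta}$, the equation $\delta_s(\bar\theta)=\delta_r(\theta)$ becomes, after clearing denominators (the factors $1-\epsilon$ cancel), the quadratic in $u$
\[
r^2(2sv-1)u^2-2rs^2v^2u+s^2v^2=0 ,
\]
whose discriminant factors as $4r^2s^2v^2(sv-1)^2$, giving the two roots $u=sv/r$ and $u=sv/(r(2sv-1))$. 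A genuine intersection would require $|u|=1$; but $|sv/r|=s/r>1$, while $|2sv-1|\geq 2s-1$ together with $s>r\geq r/(2r-1)$ (the last inequality being equivalent to $r\geq 1$) yields $|sv/(r(2sv-1))|\leq s/(r(2s-1))<1$. Hence neither root lies on the unit circle and $\Gamma_\epsilon(s)\cap\Gamma_\epsilon(r)=\emptyset$.

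Finally I would conclude by a connectedness argument. The map $\theta\mapsto\delta_s(\theta)$ is continuous on $[0,2\pi]$, since $2se^{i\theta}-1$ never vanishes for $s>1/2$, so $\Gamma_\epsilon(s)$ is a connected set disjoint from $\Gamma_\epsilon(r)$ and therefore lies in a single connected component of $\C\setminus\Gamma_\epsilon(r)$. As $\delta_s(0)$ belongs to the unbounded (exterior) component by the second paragraph, the entire curve, and in particular $\delta_s(\bar\theta)$, lies in the exterior of $\Sigma_\epsilon(r)$. Thus $\delta_s(\bar\theta)\in\spec\nmyp$ but $\delta_s(\bar\theta)\notin\Sigma_\epsilon(r)$, which establishes the contrapositive and hence the theorem.
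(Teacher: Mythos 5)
Your proof is correct and follows exactly the route the paper indicates: the paper only sketches this result, remarking that it can be proved as \Cref{ThmSigma} by showing that $\Gamma_\epsilon(r)$ and $\Gamma_\epsilon(r')$ do not intersect for distinct $r,r'\geq 1$, and your argument is a complete and accurate elaboration of precisely that sketch (the quadratic in $u$, its discriminant $4r^2s^2v^2(sv-1)^2$, and the modulus estimates on both roots all check out). Nothing further is needed.
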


\textcolor{blue}{
An ingredient of the proof of Theorem~\ref{ThmSigma} was to show that for any $r>1$, the curve $\Gamma_\epsilon (r)$ 
%defined by the parametric equation $\theta \mapsto  \delta_r(\theta) ,\theta\in (0,2\pi]$, 
does not intersect with the curve $\Gamma_\epsilon$. 
In a similar way, we can prove the above~\Cref{ThmSigmastronger}, by showing that
the curve $\Gamma_\epsilon(r)$ does not intersect with the curve $\Gamma_\epsilon(r')$
for any distinct $r\geq 1$ and $r'\geq 1$.
}
%The above theorem can be proved in a similar way as Theorem~\ref{ThmSigma}, by showing that $\Gamma_\epsilon(r)$ does not intersect with $\Gamma_\epsilon(r')$ for any distinct $r\geq 1$ and $r'\geq 1$.
\begin{remark}\rm\label{RmkSigmastronger}
Let $0<\gamma\leq 1$. An equivalent statement of Theorem~\ref{ThmSigmastronger} is as follows: if  $\spec \nmyp \subset \Sigma_\epsilon\left(\frac{1-\gamma{\sqrt{\epsilon}}}{1-\sqrt{\epsilon}}\right)$, then $\spec \myq \subset \ball(0, 1-\gamma\sqrt{\epsilon})$.  
This implies that the asymptotic rate can be of order $1-\Omega(\sqrt{\epsilon})$ if the spectrum  of $P$ is sufficiently close to $\Sigma_\epsilon$.
  We illustrate this result in Figure~\ref{fig:sigmar} with the example of $\epsilon=0.01$ and $\gamma=0.5$. 
\end{remark}

	\begin{figure}[htbp]
	\label{fig:sigmar}
	\centering
	\subfigure[]{
	\label{fig:sigmara}
\includegraphics[width=0.45\textwidth]{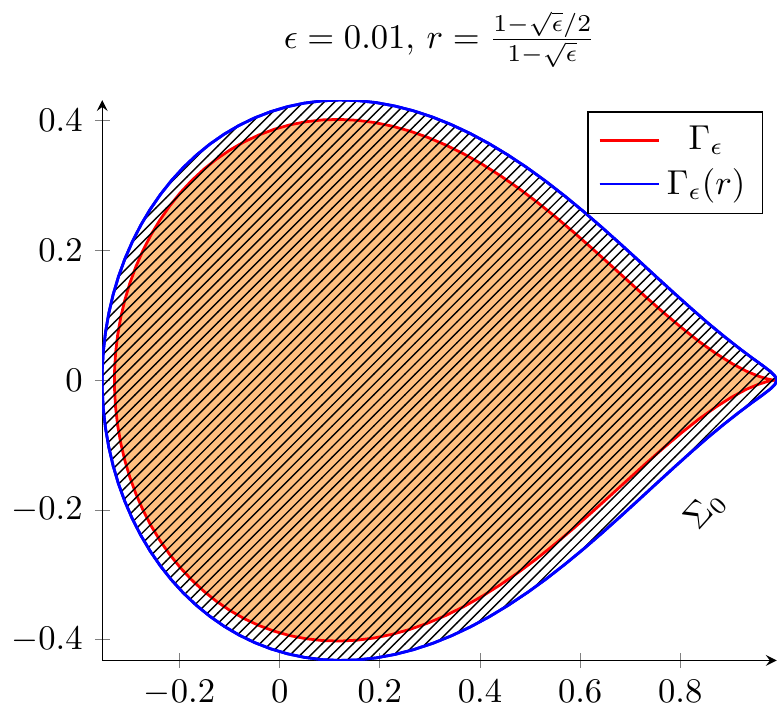}
}
\subfigure[]{
	\label{fig:sigmarb}
	\includegraphics[width=0.45\textwidth]{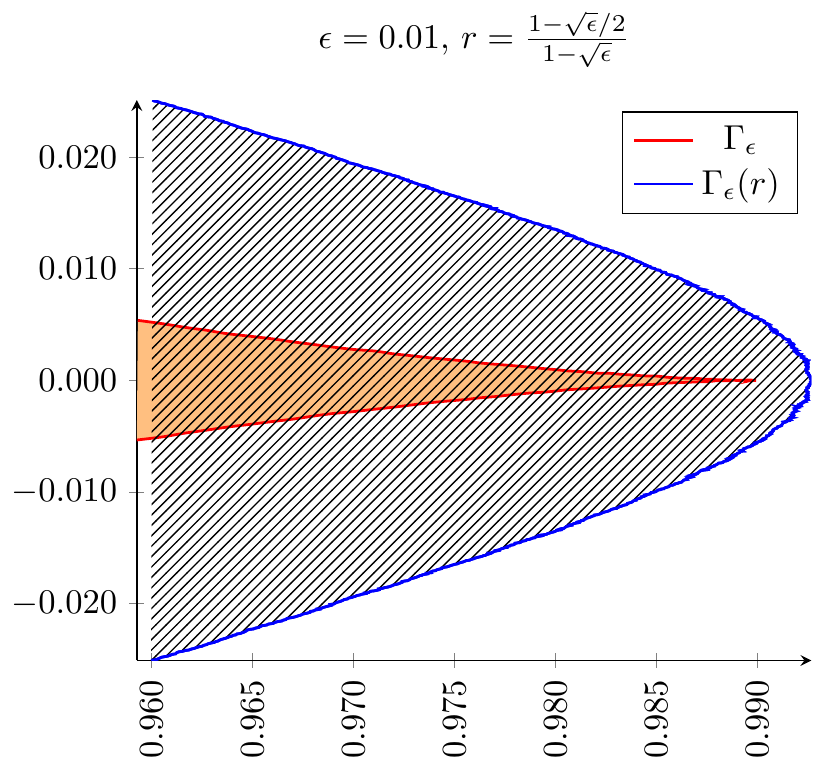}
}
\caption{Illustration of the curve $\Gamma_\epsilon$ (the curve in red) and its enclosed region $\Sigma_\epsilon$ (the region in orange), and the curve $\Gamma_\epsilon(r)$ (the curve in blue) and its enclosed region $\Sigma_\epsilon(r)$ (the dashed region). \Cref{fig:sigmara} is a zoom of~\Cref{fig:sigmarb}. }
\end{figure}

\subsection{Enlargement of the accelerable region by damping}\label{sec:thickness}
In this subsection we consider the effect of the Krasnosel'ski\u\i-Mann damping parameter $\beta \in (0,1]$
The following corollary, which is immediate from \Cref{ThmSigma}, determines the accelerable region for the spectrum of the initial matrix $\origp$.
\begin{corollary}\label{cor:bta}
	If there is $\beta\in (0,1]$ such that $\spec P_\beta \subset \Sigma_{\epsilon}$, then AVI algorithm~\eqref{AVIalphabeta} with the parameters $\beta$ and $\alpha=\frac{1-\sqrt{\epsilon}}{1+\sqrt{\epsilon}}$ converges  with an  asymptotic rate no greater than  $1-\sqrt{\epsilon}$, i.e., ~\eqref{a:sqrt} holds.
\end{corollary}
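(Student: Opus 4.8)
The plan is to reduce the general damped case to~\Cref{ThmSigma} applied to the matrix $P_\beta=(1-\beta)I+\beta P$, and then to convert the resulting spectral radius bound into an asymptotic convergence rate via Gelfand's formula.

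First I would record the linear recursion driving the algorithm. Writing $z_k := y_k - x_*$ and using that $x_*$ is also the fixed point of the damped operator $x \mapsto (1-\beta)x + \beta T(x) = P_\beta x + \beta g$, the AVI scheme~\eqref{AVIalphabeta} takes the second-order form~\eqref{a:avi}, so that $(z_{k+1}, z_k)^\top = \qab\,(z_k, z_{k-1})^\top$, where $\qab$ is the matrix appearing in~\eqref{AVIQ}, that is,~\eqref{a:myq} with $P$ replaced by $P_\beta$. Consequently \Cref{SpecQ} and \Cref{ThmSigma} apply verbatim to $P_\beta$: since the hypothesis gives $\spec P_\beta \subset \Sigma_\epsilon$ and $\alpha = (1-\sqrt\epsilon)/(1+\sqrt\epsilon)$, \Cref{ThmSigma} yields $\spec \qab \subset \ball(0, 1-\sqrt\epsilon)$, hence $\rho(\qab) \le 1 - \sqrt\epsilon$.

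Second, I would pass from this spectral bound to the asymptotic rate. Iterating the recursion gives $(z_{k+1}, z_k)^\top = \qab^{\,k}(z_1, z_0)^\top$, so combining Gelfand's formula $\lim_k \|\qab^{\,k}\|^{1/k} = \rho(\qab)$ with the submultiplicative bound $\|\qab^{\,k}(z_1,z_0)^\top\|\le \|\qab^{\,k}\|\,\|(z_1,z_0)^\top\|$ gives $\limsup_k \|z_k\|^{1/k} \le \rho(\qab) \le 1-\sqrt\epsilon$. Finally, from $x_{k+1} - x_* = P_\beta(y_k - x_*) = P_\beta z_k$ one transfers the rate from $(y_k)$ to $(x_k)$, since $\|x_{k+1}-x_*\| \le \|P_\beta\|\,\|z_k\|$; thus $\limsup_k \|x_k - x_*\|^{1/k} \le 1-\sqrt\epsilon$, which is~\eqref{a:sqrt}. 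As all norms on $\C^n$ are equivalent and $c^{1/k}\to 1$ for any $c>0$, this $\limsup$ is independent of the chosen norm, as the statement demands.

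Since this is a corollary, there is no genuine obstacle; the only point requiring a little care is the passage from $\rho(\qab) \le 1-\sqrt\epsilon$ to~\eqref{a:sqrt}. Here one must use the spectral radius, rather than $\|\qab\|$, to govern the asymptotic behaviour: the polynomial-in-$k$ prefactors produced by any nontrivial Jordan blocks of $\qab$ vanish under the $k$-th root, so the rate bound holds even when $\qab$ fails to be diagonalizable.
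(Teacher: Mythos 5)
Your proposal is correct and follows exactly the route the paper intends: the paper declares the corollary ``immediate from \Cref{ThmSigma}'' after having already noted that the asymptotic rate of $(y_k)$, and hence of $(x_k)$, is governed by the spectral radius of the block-companion matrix in~\eqref{AVIQ}. Your write-up merely makes explicit the two steps the paper leaves implicit (applying \Cref{ThmSigma} to $P_\beta$, then invoking Gelfand's formula and the identity $x_{k+1}-x_*=P_\beta z_k$), and both are carried out correctly.
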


%For any $\beta \in (0,1]$, the spectrum of $P_{\beta}$ is the image of the spectrum of $P$  by the homothety $H^{\beta}:=z\mapsto 1-\beta + \beta z$ of center $1$ and ratio ${\beta}$.  
%We define $\Gamma_\epsilon^{\beta}$ to be the simple closed curve  defined by the parametric equation $$\theta \mapsto 1-\frac{1}{\beta}+ \frac{1-\epsilon}{\beta}\frac{\e^{2i \theta}}{2 \e^{i \theta}-1}, \theta \in (0,2 \pi].$$
%Let
% $\Sigma_{\epsilon}^{\beta}$
%\todo{check consistent order of $\epsilon$ and $\beta$ in $\Sigma^{\epsilon,\beta}$ / sequel }
%be the compact set delimited by the curve $\Gamma_\epsilon^{\beta}$. % given by the parametric equation $\theta \mapsto 1-\frac{1}{\beta}+ \frac{1-\epsilon}{\beta}\frac{\e^{2i \theta}}{2 \e^{i \theta}-1}, \theta \in [0,2 \pi]$. 
%Then the set $\Sigma_{\epsilon}^{\beta}$ is the image of the set $\Sigma_\epsilon$ by the homothety $H^{\frac{1}{\beta}}$ of center $1$ and ratio $\frac{1}{\beta}$. 

Based on \Cref{cor:bta}, we now look for a radius $r>0$ such that if $\spec P \subset \ball(0,r) \cup [-1+\epsilon,1-\epsilon]$, then there is a scaling parameter $\beta \in (0,1]$ such that $\spec P_\beta \subset  \Sigma_{\epsilon'} $, for some $\epsilon'>0$%$\spec\myp \subset \Sigma_\epsilon$
, with the goal of achieving
% so that
 an accelerated asymptotic rate $1-\Omega(\sqrt{\epsilon})$% is achievable by acceleration
.

We start by giving a disk and a part of the real line which are contained  in $\Sigma_0$.
\begin{lemma}\label{l:1o3}
We have
$$
\ball\left(\frac{1}{3},\frac{1}{3}\right) \cup \left[-\frac{1}{3},1\right]\subset \Sigma_0.
$$
\end{lemma}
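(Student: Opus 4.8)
The plan is to compare the two target sets directly with the boundary curve $\Gamma_0$, parametrized by $w(\theta)=e^{2i\theta}/(2e^{i\theta}-1)$ for $\theta\in(0,2\pi]$. Writing $u=e^{i\theta}$ and using $u\bar u=1$ together with $|2u-1|^2=5-4\cos\theta$ and $u^2\bar u=u$, I would first record the two elementary identities
\[
|w(\theta)|^2=\frac{1}{5-4\cos\theta},\qquad w(\theta)=\frac{2u-u^2}{5-4\cos\theta},
\]
the second of which yields $\real w(\theta)=(1+2\cos\theta-2\cos^2\theta)/(5-4\cos\theta)$. These make the rest of the argument a matter of checking two inequalities on $\Gamma_0$.

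For the disk, the key step is the algebraic simplification
\[
\Bigl|w(\theta)-\tfrac13\Bigr|^2-\tfrac19=|w(\theta)|^2-\tfrac23\real w(\theta)=\frac{(1-2\cos\theta)^2}{3\,(5-4\cos\theta)}\ge 0,
\]
obtained by substituting the identities above and observing that the numerator is a perfect square while $5-4\cos\theta\ge 1>0$. Hence every point of $\Gamma_0$ satisfies $|w-1/3|\ge 1/3$, so the open disk $\{z:|z-1/3|<1/3\}$ is disjoint from $\Gamma_0$ (the curve is in fact internally tangent to $\partial\ball(1/3,1/3)$ at $\cos\theta=1/2$). To convert this into containment I would appeal to the Jordan curve theorem: $\C\setminus\Gamma_0$ splits into the bounded interior of $\Sigma_0$ and an unbounded component. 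A short winding count gives $\mathrm{ind}_{\Gamma_0}(0)=2-1=1\neq 0$ (the factor $u^2$ contributes $+2$, while $2u-1$ winds once about $0$), so $0\in\mathrm{int}\,\Sigma_0$. The open disk is connected, contains $0$, and avoids the separating curve $\Gamma_0$, hence lies entirely in $\mathrm{int}\,\Sigma_0$; since $\Sigma_0$ is closed, taking closures gives $\ball(1/3,1/3)\subset\Sigma_0$.

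For the segment I would first locate $\Gamma_0\cap\R$: from $\mathrm{Im}(2u-u^2)=2\sin\theta(1-\cos\theta)$ the curve meets the real axis only at $\theta=0$ and $\theta=\pi$, giving the two points $w(0)=1$ and $w(\pi)=-1/3$. Thus the open segment $(-1/3,1)$ is a connected subset of $\R$ disjoint from $\Gamma_0$ and containing $0\in\mathrm{int}\,\Sigma_0$, so $(-1/3,1)\subset\mathrm{int}\,\Sigma_0$; adjoining the two endpoints, which lie on $\Gamma_0\subset\Sigma_0$, yields $[-1/3,1]\subset\Sigma_0$.

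I expect the only real content to be the perfect-square identity for the disk: it is exactly what makes the estimate $|w-1/3|\ge 1/3$ hold, with equality reflecting an internal tangency rather than a transversal crossing. Everything else is topological bookkeeping (Jordan curve theorem plus a connectedness argument), which becomes routine once the single interior point $0$ has been pinned down by the winding-number count.
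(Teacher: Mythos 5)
Your proposal is correct and follows essentially the same route as the paper: the same perfect-square identity $\bigl|w(\theta)-\tfrac13\bigr|^2-\tfrac19=(2\cos\theta-1)^2/(3(5-4\cos\theta))\ge 0$ showing $\Gamma_0$ avoids the open disk, the same identification of $\Gamma_0\cap\R=\{1,-1/3\}$, and the same connectedness argument through the interior point $0$. The only difference is that you make the topological step explicit via the Jordan curve theorem and a winding-number count for $0$, which the paper leaves implicit; incidentally, your constant $3(5-4\cos\theta)$ in the denominator is the correct one (the paper drops a factor of $3$ in an intermediate simplification, harmlessly).
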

\begin{proof}
The boundary of $\Sigma_0$ intersects with the real axis  at $(1,0)$ and $(-1/3,0)$. Thus $\left[-\frac{1}{3},1\right]\subset \Sigma_0$.
For any $\theta\in (0, 2\pi]$, we have
\begin{align*}
&\left |\frac{e^{2i \theta	}}{2e^{i \theta}-1}-\frac{1}{3}\right |^2-\frac{1}{9}= \left |\frac{3 e^{2i \theta}-2e^{i \theta}+1}{3(2e^{i \theta}-1)}\right|^2-\frac{1}{9}
\\ & = \frac{ |3 e^{2i \theta}-2e^{i \theta}+1|^2-|2e^{i \theta}-1|^2 }{9 |2e^{i \theta}-1|^2 }\\ &=\frac{ 14-16\cos(\theta)+6 \cos(2 \theta)-5+4\cos(\theta)}{9 |2e^{i \theta}-1|^2} \\& = \frac{4\cos^2(\theta)-4\cos(\theta)+1}{9 |2e^{i \theta}-1|^2}   =\frac{(2\cos(\theta)-1)^2} {9 |2e^{i \theta}-1|^2}\geq 0 .
\end{align*}
Thus the boundary of $\Sigma_0$ does not intersect the interior of the disk $\ball (\frac{1}{3},\frac{1}{3})$. Since $0\in \Sigma_0 \cap  \ball (\frac{1}{3},\frac{1}{3})$,  the disk 
$\ball (\frac{1}{3},\frac{1}{3})$ is entirely contained in   $\Sigma_0$.
\end{proof}
%We denote by $H^{\beta}:=z\mapsto 1-\beta + \beta z$ the homothety of center $1$ and ration $\beta$.

The following result shows that if the spectrum of the initial matrix $P$
belongs to a ``flying saucer'' shaped region of the complex plane (see \Cref{fig:fsa} for illustration),
the AVI algorithm does converge with an asymptotic rate $1-\Omega(\sqrt{\epsilon})$. 
\begin{theorem}\label{th:flyingsaucer}
  %[Flying Saucer Region]
  If $\spec P\subset \mathcal{B}(0,\frac{1-\epsilon}{2})\cup [-1+\epsilon,1-\epsilon]$, then by setting $\beta=\frac{2}{3-\epsilon}$ and \begin{equation}\label{eq:alpha}\alpha=\frac{1-\sqrt{2\epsilon/(3-\epsilon)}}{1+\sqrt{2\epsilon/(3-\epsilon)}},\end{equation} the iterates of algorithm AVI~\eqref{AVIalphabeta}  satisfy $$  \limsup_{k\to\infty}\|x_k-x_*\|^{1/k} \leq 1-\sqrt{\frac{2\epsilon}{3}}.$$
  %% $\nmyp=(1-\beta)\I+\beta \origp$ and $\myq=\begin{pmatrix} 
  %%       (1+\alpha)\nmyp & -\alpha \nmyp \\
  %%       I & 0 
  %% \end{pmatrix}$ such that
  %$\spec\qab\subset \mathcal{B}(0,1-\sqrt{{2\epsilon}/{3} })$.
\end{theorem}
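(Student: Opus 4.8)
The plan is to reduce the statement to \Cref{cor:bta} by exhibiting, for the given $\beta$, an auxiliary accuracy $\epsilon'$ for which $\spec P_\beta\subset\Sigma_{\epsilon'}$. Recall that $P_\beta=(1-\beta)I+\beta P$, so $\spec P_\beta$ is the image of $\spec P$ under the homothety $H_\beta:z\mapsto 1+\beta(z-1)$ of center $1$ and ratio $\beta$. Recall also, from the discussion preceding \Cref{l:1o3}, that $\Sigma_{\epsilon'}=(1-\epsilon')\Sigma_0$, so that scaling \Cref{l:1o3} by $1-\epsilon'$ gives $\ball\bigl(\tfrac{1-\epsilon'}{3},\tfrac{1-\epsilon'}{3}\bigr)\cup[-\tfrac{1-\epsilon'}{3},1-\epsilon']\subset\Sigma_{\epsilon'}$. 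The strategy is therefore to track the images of the disk $\ball(0,\tfrac{1-\epsilon}{2})$ and of the segment $[-1+\epsilon,1-\epsilon]$ under $H_\beta$, and to show that with $\beta=\tfrac{2}{3-\epsilon}$ they both land inside this scaled region for a suitable $\epsilon'$.

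Concretely, I would set $\epsilon':=\tfrac{2\epsilon}{3-\epsilon}$, noting that $\epsilon'\in(0,1)$ and $\beta=\tfrac{2}{3-\epsilon}\in(0,1]$ for all $\epsilon\in(0,1)$, and observe that $\tfrac{1-\sqrt{\epsilon'}}{1+\sqrt{\epsilon'}}$ is exactly the parameter~\eqref{eq:alpha}. The key computation is that $H_\beta$ sends the disk $\ball(0,\tfrac{1-\epsilon}{2})$, whose radius equals the distance from its center to $0$, to the disk of center $1-\beta=\tfrac{1-\epsilon}{3-\epsilon}$ and radius $\beta\tfrac{1-\epsilon}{2}=\tfrac{1-\epsilon}{3-\epsilon}$; since $\tfrac{1-\epsilon}{3-\epsilon}=\tfrac{1-\epsilon'}{3}$, this image is exactly $\ball\bigl(\tfrac{1-\epsilon'}{3},\tfrac{1-\epsilon'}{3}\bigr)$. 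This is precisely why $\beta=\tfrac{2}{3-\epsilon}$ is the right choice: it forces the origin-tangent input disk onto the origin-tangent disk appearing in the scaled \Cref{l:1o3}.

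Next I would check the segment: $H_\beta$ maps $[-1+\epsilon,1-\epsilon]$ to $[\,1-\beta(2-\epsilon),\,1-\beta\epsilon\,]=[-\tfrac{1-\epsilon}{3-\epsilon},\tfrac{3(1-\epsilon)}{3-\epsilon}]$, and since $\tfrac{3(1-\epsilon)}{3-\epsilon}=1-\epsilon'$ and $\tfrac{1-\epsilon}{3-\epsilon}=\tfrac{1-\epsilon'}{3}$, this is exactly $[-\tfrac{1-\epsilon'}{3},1-\epsilon']$. Combining the two images with the scaled \Cref{l:1o3} yields $\spec P_\beta\subset\Sigma_{\epsilon'}$, whence \Cref{cor:bta}, applied with $\epsilon'$ in place of $\epsilon$, gives an asymptotic rate at most $1-\sqrt{\epsilon'}=1-\sqrt{\tfrac{2\epsilon}{3-\epsilon}}$. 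I would then conclude using $3-\epsilon\le 3$, so $\sqrt{\tfrac{2\epsilon}{3-\epsilon}}\ge\sqrt{\tfrac{2\epsilon}{3}}$, to reach the stated (slightly weaker but cleaner) bound $1-\sqrt{\tfrac{2\epsilon}{3}}$.

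I expect no genuine obstacle: the argument is a reduction to \Cref{cor:bta}, and the only real content is the arithmetic pinning down $\epsilon'=\tfrac{2\epsilon}{3-\epsilon}$ together with the two exact identities $\tfrac{1-\epsilon}{3-\epsilon}=\tfrac{1-\epsilon'}{3}$ and $\tfrac{3(1-\epsilon)}{3-\epsilon}=1-\epsilon'$. The one point requiring a little care is to confirm $\beta\in(0,1]$ and $\epsilon'\in(0,1)$ so that \Cref{cor:bta} legitimately applies; both hold throughout $\epsilon\in(0,1)$.
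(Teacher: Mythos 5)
Your proposal is correct and follows essentially the same route as the paper's proof: apply the homothety $H^\beta$, check that the flying-saucer region lands exactly on the scaled version of the region in \Cref{l:1o3} with $\epsilon'=\beta\epsilon=\tfrac{2\epsilon}{3-\epsilon}$, and conclude via \Cref{cor:bta}. The arithmetic identities you verify are the same ones the paper records in the form $1-\beta=\tfrac{\beta(1-\epsilon)}{2}=\tfrac{1-\beta\epsilon}{3}$.
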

\ifX
\begin{proof}For any $\beta \in (0,1]$, the spectrum of $P_{\beta}$ is the image of the spectrum of $P$  by the homothety $H^{\beta}:=z\mapsto 1-\beta + \beta z$ of center $1$ and ratio ${\beta}$. 
Note that $\beta=\frac{2}{3-\epsilon}$ satisfies
$$
1-\beta= \frac{\beta(1-\epsilon)}{2}=\frac{1-\beta \epsilon}{3}.
$$
Hence the image of $ \mathcal{B}(0,\frac{1-\epsilon}{2})\cup [-1+\epsilon,1-\epsilon]$ by  the homothety $H^\beta$  is
$$
\mathcal{B}\left (\frac{1-\beta \epsilon}{3},\frac{1-\beta \epsilon}{3}\right )\cup \left [-\frac{1-\beta\epsilon}{3},1-\beta\epsilon\right].
$$
See~\Cref{fig:fsb} for an illustration.
In view of~\Cref{l:1o3}, this region is contained in $\Sigma_{\beta\epsilon}$. It follows that $\spec P_{\beta}\subset \Sigma_{\beta\epsilon}$ and the statement follows by applying \Cref{cor:bta}.
%	$\hfill\square$
\end{proof}
\fi

	\begin{figure}[htbp]
	\label{fig:sigmar-bis}
	\centering
	\subfigure[]{
	\label{fig:fsa}
	\includegraphics[width=0.45\textwidth]{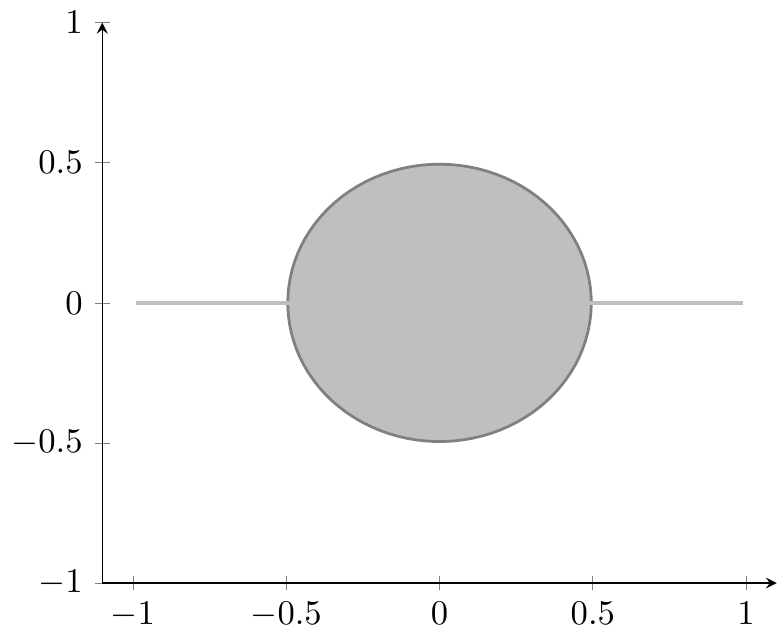}
}
\subfigure[]{
	\label{fig:fsb}
	\includegraphics[width=0.45\textwidth]{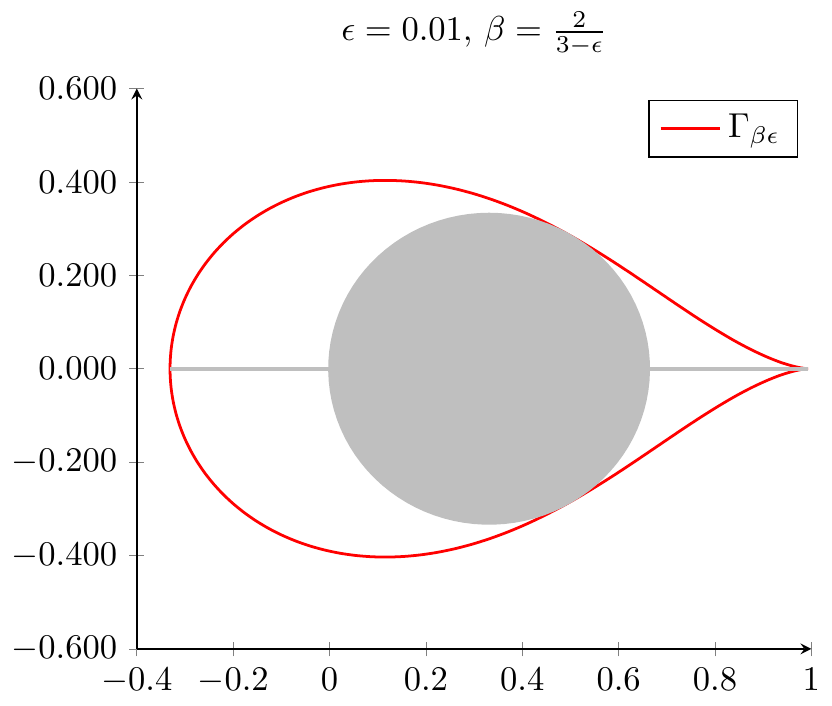}
}
\caption{(a):
  %\Cref{fig:fsa}:
  the region $\mathcal{B}\left(0,\frac{1-\epsilon}{2}\right)\cup [-1+\epsilon,1-\epsilon]$ (the flying saucer shaped region in grey).
  (b):
  %\Cref{fig:fsb}:
  the curve $\Sigma_{\beta\epsilon}$ (the curve in red) and the image of $\mathcal{B}\left(0,\frac{1-\epsilon}{2}\right)\cup [-1+\epsilon,1-\epsilon]$ by the homothety $H^\beta$. }
\end{figure}
\begin{remark}\rm For $0<\epsilon<\frac{1}{3}$,
the flying saucer shaped region $\mathcal{B}\left(0,\frac{1-\epsilon}{2}\right)\cup [-1+\epsilon,1-\epsilon]$
can not be included in $\Sigma_{0}$ and \Cref{cor:bta} is not applicable. However, the homothety  $H^\beta$ with $\beta=\frac{2}{3-\epsilon}$  sends this region inside  $\Sigma_{\beta\epsilon}$, whence an accelerated asymptotic rate.
\end{remark}
\begin{remark}\rm
In the special  case when  $\spec P\subset [-1+\epsilon,1-\epsilon]$, a similar result  was established in~\cite{julien}.  Translated with our notations, Theorem 5.1 in~\cite{julien} proved an asymptotic rate $1-\sqrt{{\epsilon}/(2-\epsilon)}$ by setting $\alpha={1}/{(2-\epsilon)}$ and $\beta=\frac{1-\sqrt{\epsilon/(2-\epsilon)}}{1+\sqrt{\epsilon/(2-\epsilon)}}$ in~\eqref{AVIalphabeta}.  
\end{remark}
We complement~\Cref{th:flyingsaucer} by  showing the optimality of the radius $\frac{1-\epsilon}{2}$ in the sense described by the following lemma. 
\begin{lemma}\label{lemCircle}
	The largest radius $r\geq 0$, for which there exists $\beta\in (0,1]$ such that 
	%the image of the ball $\mathcal{B}(0,r)$ by the homothety $H^{\beta}:=z\mapsto 1-\beta + \beta z$ 
	 $H^{\beta}(\mathcal{B}(0,r))\subset \Sigma_0$, is $r=\frac{1}{2}$ and it corresponds to the choice $\beta=\frac{2}{3}$.
\end{lemma}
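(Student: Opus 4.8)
The plan is to straighten the cusp that $\Sigma_0$ has at the point $1$ (where $\Gamma_0$ meets the real axis) by means of the Möbius map $\zeta=1/(1-z)$, which sends $1$ to $\infty$ and, as I will check, turns $\Sigma_0$ into the inside of a parabola; the extremal inscribed disk is then transparent. First I record the easy half: by \Cref{l:1o3} we have $\mathcal{B}(1/3,1/3)\subset\Sigma_0$, and since $H^{2/3}(\mathcal{B}(0,1/2))=\mathcal{B}(1/3,1/3)$, the value $r=1/2$ is admissible with $\beta=2/3$. The whole content is therefore to show that no $r>1/2$ is admissible.

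The key computation is the image of $\Gamma_0$ under $\zeta=1/(1-z)$. Writing $w=e^{i\theta}$ and using $1-z=-(w-1)^2/(2w-1)$, a half-angle reduction with $t=\cot(\theta/2)$ yields $\operatorname{Re}\zeta=(3+t^2)/4$ and $\operatorname{Im}\zeta=t/2$; eliminating $t$ shows that $\Gamma_0$ is mapped onto the parabola $\{\operatorname{Re}\zeta=\tfrac34+(\operatorname{Im}\zeta)^2\}$. Since the interior point $z=0\in\Sigma_0$ is sent to $\zeta=1$, which lies on the convex side, $\Sigma_0$ is mapped onto the closed convex region $\mathcal{R}=\{\operatorname{Re}\zeta\ge \tfrac34+(\operatorname{Im}\zeta)^2\}$. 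For $r<1$ the disk $H^\beta(\mathcal{B}(0,r))=\mathcal{B}(1-\beta,\beta r)$ does not contain the pole $z=1$, hence is sent to a genuine disk $\mathcal{B}(C,R)$ centred on the real axis; taking the images $1/(\beta(1+r))$ and $1/(\beta(1-r))$ of its two real boundary points gives $C=1/(\beta(1-r^2))$ and $R=r/(\beta(1-r^2))$, so that $R/C=r$ and $C\ge 1/(1-r^2)$ as $\beta$ ranges over $(0,1]$.

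It now suffices to maximize $R/C$ over disks $\mathcal{B}(C,R)\subset\mathcal{R}$. As $\mathcal{R}$ is convex with axis of symmetry the real axis, such a disk lies in $\mathcal{R}$ iff $R$ does not exceed the distance from $(C,0)$ to the parabola; minimizing $(\tfrac34+Y^2-C)^2+Y^2$ over $Y$ shows this squared distance equals $C-1$ as soon as $C\ge \tfrac54$. For $r>1/2$ we have $1/(1-r^2)>4/3>5/4$, so every admissible $C$ exceeds $5/4$ and the containment forces $R^2=r^2C^2\le C-1$, i.e.\ $r^2C^2-C+1\le 0$; but this quadratic in $C$ has negative discriminant $1-4r^2$ and is therefore everywhere positive, a contradiction. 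Hence $r\le 1/2$, and the extremal disk $C=2$, $R=1$ (attained at $\beta=2/3$) sits tangentially inside the parabola since $R^2=1=C-1$, matching the configuration $\mathcal{B}(1/3,1/3)$ found above.

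The only genuine obstacle is the first computation: one must verify not merely that $\zeta=1/(1-z)$ carries $\Gamma_0$ onto the stated parabola, but also that $\Sigma_0$ goes to the convex side $\mathcal{R}$ rather than its complement, and that the candidate disks avoid the pole so that they remain disks; once the parabola picture is secured the optimization is elementary.
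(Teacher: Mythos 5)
Your proof is correct, and it takes a genuinely different route from the paper's. The paper works directly in the $z$-plane: it observes that the line $(D_r)$ through $(1,0)$ tangent to $\mathcal{B}(0,r)$ is invariant under the homothety $H^\beta$ (whose center is $1$), hence remains tangent to $H^\beta(\mathcal{B}(0,r))$ and must therefore meet $\Gamma_0$ at a second point; parametrizing that intersection yields the constraint $r/\sqrt{1-r^2}\le\max_\theta \sin\theta/(2-\cos\theta)=\sqrt{3}/3$, i.e.\ $r\le 1/2$. You instead apply the M\"obius map $\zeta=1/(1-z)$, which (I checked the half-angle computation: with $t=\cot(\theta/2)$ one gets $\operatorname{Re}\zeta=(3+t^2)/4$, $\operatorname{Im}\zeta=t/2$) straightens $\Gamma_0$ into the parabola $X=\tfrac34+Y^2$ and turns the family $H^\beta(\mathcal{B}(0,r))=\mathcal{B}(1-\beta,\beta r)$ into disks $\mathcal{B}(C,rC)$ with $C\ge 1/(1-r^2)$; the containment then reduces to $r^2C^2\le C-1$, whose insolubility for $r>1/2$ is immediate from the discriminant. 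What your approach buys is a conceptual explanation of where $1/2$ comes from (the tangency condition $R^2=C-1$ combined with the fixed aspect ratio $R/C=r$), and it replaces the paper's trigonometric maximization and the slightly delicate claim that $(D_r)$ must re-enter $\Sigma_0$ by a convexity argument in the transformed picture; the price is the burden of verifying the parabola computation, the orientation of the image (which you correctly settle via $\zeta(0)=1$), and that the disks avoid the pole. One cosmetic remark: your contradiction argument as written covers $r\in(1/2,1)$ only; the cases $r\ge 1$ are disposed of either as in the paper (the point $1+\beta(r-1)\ge 1$ lies outside the interior of $\Sigma_0$) or simply by monotonicity of admissibility in $r$, so this is not a genuine gap.
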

\ifX
\begin{proof}
Applying the homothety $H^{\beta}$ to $\mathcal{B}(0,r)$ leads to the ball $\mathcal{B}(1-\beta,\beta r)$. We thus look for the largest $r$ such that  $\mathcal{B}(1-\beta,\beta r)\subset \Sigma_0$ for some $\beta\in(0,1)$.
	We notice that $r>1$ is not possible, because  for any $r>1$ we have $1+\beta (r-1)>1$, which is outside $\Sigma_0$.
	 
	Now, we suppose that $0 \leq r \leq 1$ and there is $\beta\in(0,1)$ such that $\mathcal{B}(1-\beta,\beta r) \subset \Sigma_0$.  We consider the line $(D_r)$ of the complex plane passing through the point of coordinates $(1,0)$ and tangent to the upper half of the circle $\mathcal{B}(0,r)$. This line is given by the equation $$y=\frac{r}{\sqrt{1-r^2}}(1-x).$$  
	Note that $(D_r)$ is invariant by the homothety $H^{\beta}$ and thus is also tangent to $\mathcal{B}(1-\beta,\beta r)$.  Thus $(D_r) $ must intersect with  $ \Sigma_0$ at a  point  other than $(1,0)$, see~\Cref{fig:lineD}
	for an illustration.
	The curve $\Gamma_0$ is given by $$\theta \mapsto \frac{e^{2i\theta}}{2 e^{i\theta}-1}%= \frac{e^{2i\theta}(2 e^{-i\theta}-1)}{(2 e^{i\theta}-1)(2 e^{-i\theta}-1)}
	=\frac{2 \cos(\theta)-\cos(2 \theta)}{5-4\cos(\theta)}+i \frac{2 \sin(\theta)-\sin(2 \theta)}{5-4\cos(\theta)}, \enspace\theta\in(0,2\pi].$$ 
	Let  $\theta\in (0,2\pi)$ such that $ (x_0,y_0):=\left(\frac{2 \cos(\theta)-\cos(2 \theta)}{5-4\cos(\theta)}, \frac{2 \sin(\theta)-\sin(2 \theta)}{5-4\cos(\theta)}\right)$ lies in $(D_r)\cap \Sigma_0$. Then 
	$$
	\frac{r}{\sqrt{1-r^2}}=\frac{y_0}{1-x_0}=\frac{2\sin(\theta)(1-\cos(\theta))}{5-6\cos(\theta)+2\cos^2(\theta)-1} =\frac{\sin(\theta)}{2-\cos(\theta)}\enspace.
	$$
	We can easily prove that:
		\[
		\max_{\theta\in (0,2\pi)} \frac{\sin(\theta)}{2-\cos(\theta)}= \frac{\sqrt{3}}{3} \enspace.
		\]
		Hence, 
	$$
	\frac{r}{\sqrt{1-r^2}} \leq \frac{\sqrt{3}}{3},
	$$
	which implies that $r\leq 1/2$.
	
	When $r=1/2$, we let $\beta=2/3$. Then  the image of $\ball(0,r)$ by the homothety $H^\beta$  is $\ball (1/3,1/3)$, which by \Cref{l:1o3} is contained in $\Sigma_0$.
\end{proof}

\begin{figure}[htbp]
	\label{fig:lineD}
	\centering
	\subfigure[]{
	\label{fig:linea}
\includegraphics[width=0.45\textwidth]{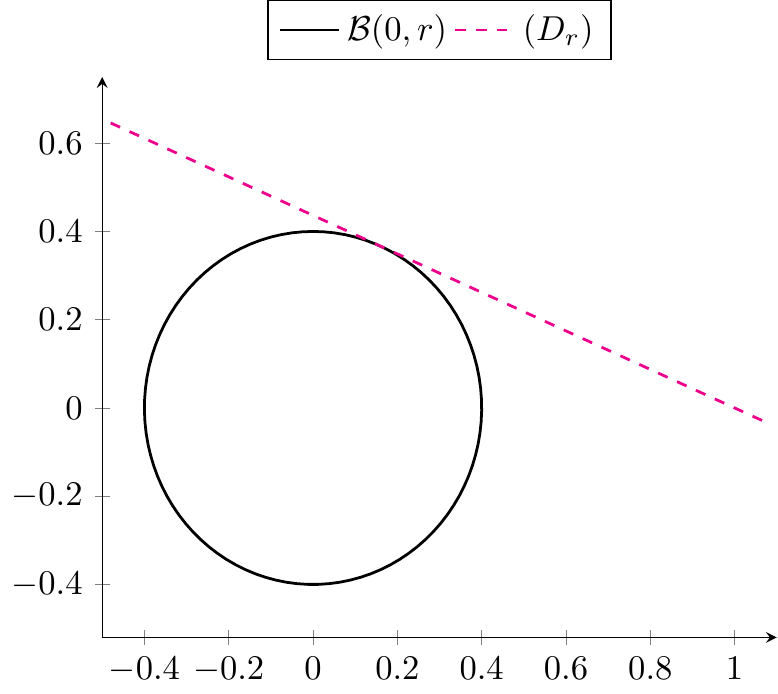}
}
\subfigure[]{
	\label{fig:lineb}
\includegraphics[width=0.45\textwidth]{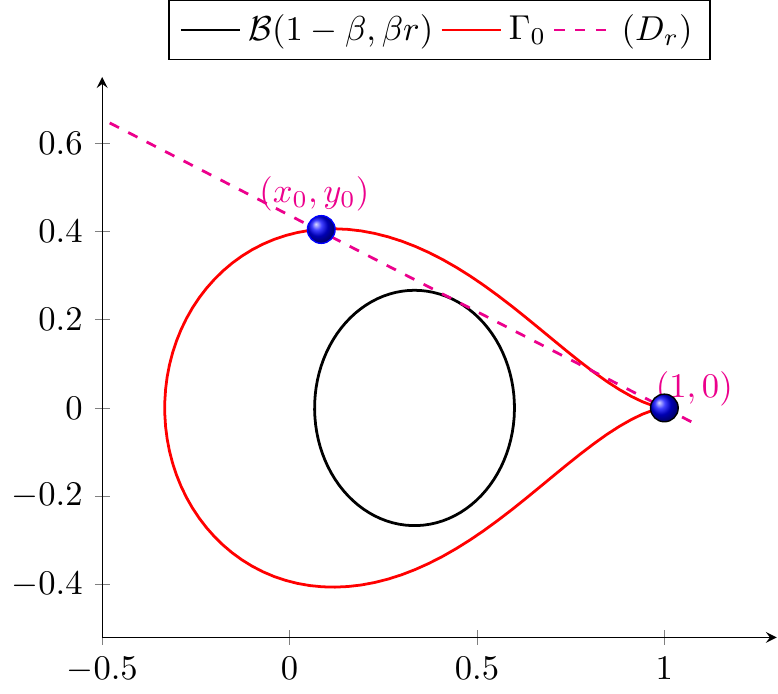}
}
\caption{(a): %\ref{fig:linea}:
  the line $(D_r)$ is tangent to the boundary of $\mathcal{B}\left(0,r\right)$. %\ref{fig:lineab}:
  (b): the line $(D_r)$ is tangent to the boundary of $\mathcal{B}\left(1-\beta,\beta r\right)$, which is contained in $\Sigma_0$. The line  $(D_r)$ intersects $\Sigma_0$ at $(1,0)$ and $(x_0,y_0)$.}
\end{figure}

\section{Analysis of Accelerated Value Iteration of degree $d$ %$d$A-VI
}\label{sec:d3}

%We define the operator 
%$$T(x)=\origp x+g$$
%We suppose there is a unique fixed point $x_*$ of $T$:
%$$T(x_*)=x_*$$
In this section we consider the acceleration scheme $d$A-VI~\eqref{dAVI} of any order $d\geq 2$. % in the special case when  $T(x)=g+\origp x$.
Hereinafter, $\alpha=(\alpha_0, \cdots, \alpha_{d-2})\in \R^{d-1}$ denotes the vector of parameters required in~\eqref{dAVIy}.   
We shall extend the previous results for $d=2$ to arbitrary $d\geq 2$. That is,  with an appropriate choice of  $\alpha$, and under an assumption on the shape of the complex spectrum of $\origp$, the asymptotic rate of~\eqref{dAVI} can  be   $1-\epsilon^{1/d}$. We refer to Remark~\ref{rem:AA} for a discussion on the connection between the $d$A-VI~\eqref{dAVI} and Anderson acceleration.

%% Inspired by the AVI algorithm, and it's close relation with the study of the rational function $\phi_\alpha$ of degree $2$. For $n\geq 3$, we define the $d$-Accelerated Value Iteration ($d$-AVI) algorithm by: 

%% where we use a Krasnosel'ski\u\i-Mann dumping (\ref{dKM}) and then an $d$-acceleration (\ref{dAVIy}) which uses the $d$ previous terms of the sequence $(x_i)_{k-d+2\leq i\leq k+1}$ in order to compute $y_{k+1}$,

\subsection{Parameters}\label{sec:pAR}
We show how to select the parameters $\alpha=(\alpha_0, \cdots, \alpha_{d-2})$
in~\eqref{dAVIy} to obtain an acceleration of any order $d\geq 2$. For the sake of simplicity we let $\beta=1$.
%that we will choose in an optimal way.
Then $z_k=y_k-x_*$ satisfies the following system of linear equations:
\[
\begin{pmatrix}
  z_{k+1}\\
z_k\\
\vdots\\
z_{k-d+2}
\end{pmatrix}=\myqd \begin{pmatrix}
z_{k}\\
z_{k-1}\\
\vdots\\
z_{k-d+1}
\end{pmatrix}
\]
where %\todo{SG: write $Q_{\alpha,d}$ and $\phi_{\alpha,d}$ everywhere below.}
%where $\myq$ is the matrix given by
\[
\myqd:=\begin{pmatrix} 
(1+\alpha_{d-2}+\cdots+\alpha_0){\nmyp} & -\alpha_{d-2} {\nmyp} & \cdots & -\alpha_0 {\nmyp}\\
I & 0 & \cdots & 0\\
\vdots & \ddots & & \vdots\\
0 & \cdots & I & 0
\end{pmatrix},
\]
%\todo{SG: write $\phi_{\alpha,d}$ for consistency with the earlier sections}
We introduce the following rational function of degree $d$ defined by %, defined on $\C\setminus\{\alpha/(1+\alpha)\}$ by 
\begin{align}\label{a:phiad}\phi_{\alpha,d}(\lambda)= \frac{\lambda^d}{U(\lambda) },\end{align} where $U(\cdot):\C\rightarrow \C$ is the  polynomial of degree $d-1$ given by:
\begin{equation*}\label{alpha_d}
U(\lambda)=(1+\alpha_{d-2}+\cdots+\alpha_0)\lambda^{d-1}-\alpha_{d-2}\lambda^{d-2}-\cdots-\alpha_0 \enspace .
\end{equation*}
The polynomial $U$ satisfies $U(1)=1$. The following
standard result, which is proved as~\Cref{SpecQ} above,
relates the eigenvalues of $\myqd$ with those of $\nmyp$. 
\begin{lemma}\label{lem-d-SpecQ}
	%If $\alpha\neq 0$ then 
	$\lambda$ is an eigenvalue of $\myq$ if and only if there exists an eigenvalue $\delta$ of $\nmyp$ such that $\delta=\phi_{\alpha,d}(\lambda)$.  In other words, $$\spec \myqd = \phi_{\alpha, d}^{-1}(\spec \nmyp).$$%\frac{\lambda^2}{(1+\alpha)\lambda-\alpha}$.
\end{lemma}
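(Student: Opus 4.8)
The plan is to mimic the proof of \Cref{SpecQ}, exploiting the block-companion structure of $\myqd$. First I would take an eigenvalue $\lambda$ of $\myqd$ and write an associated eigenvector in block form as $(\xi_0,\xi_1,\dots,\xi_{d-1})^\top \in \C^{dn}$, with each $\xi_j\in\C^n$. Reading the eigenvalue equation $\myqd(\xi_0,\dots,\xi_{d-1})^\top=\lambda(\xi_0,\dots,\xi_{d-1})^\top$ block row by block row, the $d-1$ rows carrying the identity blocks give the chain $\xi_{j-1}=\lambda\xi_j$ for $j=1,\dots,d-1$, so that $\xi_j=\lambda^{d-1-j}\xi_{d-1}$ for every $j$; in particular all blocks are determined by $\xi_{d-1}$, and $\xi_{d-1}\neq 0$, since otherwise the whole eigenvector would vanish.

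Next I would substitute these relations into the one remaining equation, coming from the first block row, namely $(1+\alpha_{d-2}+\cdots+\alpha_0)\nmyp\,\xi_0-\alpha_{d-2}\nmyp\,\xi_1-\cdots-\alpha_0\nmyp\,\xi_{d-1}=\lambda\xi_0$. Replacing each $\xi_j$ by $\lambda^{d-1-j}\xi_{d-1}$ and factoring out $\nmyp\,\xi_{d-1}$, the bracketed scalar coefficient is, after reindexing the sum, exactly the polynomial $U(\lambda)$, so the equation collapses to
\begin{equation*}
U(\lambda)\,\nmyp\,\xi_{d-1}=\lambda^{d}\,\xi_{d-1}.
\end{equation*}

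The delicate point — the analogue of the $\lambda\neq\alpha/(1+\alpha)$ discussion in \Cref{SpecQ} — is to rule out $U(\lambda)=0$, which relies on the genericity assumption $\alpha_0\neq 0$ (needed in any case for $\phi_{\alpha,d}$ to have degree $d$). Indeed, if $U(\lambda)=0$ then the displayed identity forces $\lambda^{d}\xi_{d-1}=0$, hence $\lambda=0$ as $\xi_{d-1}\neq 0$; but $U(0)=-\alpha_0\neq 0$, a contradiction. Therefore $U(\lambda)\neq 0$, and dividing gives $\nmyp\,\xi_{d-1}=\phi_{\alpha,d}(\lambda)\,\xi_{d-1}$, exhibiting $\delta:=\phi_{\alpha,d}(\lambda)$ as an eigenvalue of $\nmyp$.

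For the converse inclusion I would reverse the construction: given an eigenvalue $\delta$ of $\nmyp$ with eigenvector $v\neq 0$ and any $\lambda$ satisfying $\phi_{\alpha,d}(\lambda)=\delta$ (so $U(\lambda)\neq 0$ and $\lambda^{d}=\delta\,U(\lambda)$), I would set $\xi_j:=\lambda^{d-1-j}v$ and check directly that $(\xi_0,\dots,\xi_{d-1})^\top$ is a nonzero eigenvector of $\myqd$ for $\lambda$: the identity block rows hold by construction, and the first block row reduces precisely to $\delta\,U(\lambda)=\lambda^{d}$. Combining the two inclusions yields $\spec\myqd=\phi_{\alpha,d}^{-1}(\spec\nmyp)$. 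The only genuine obstacle is the index bookkeeping needed to recognize $U(\lambda)$ in the collapsed first-row equation, together with the degenerate case $\lambda=0$ handled above; the remainder is the standard companion-matrix argument.
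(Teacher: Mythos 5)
Your proof is correct and follows exactly the route the paper intends: the paper states that \Cref{lem-d-SpecQ} "is proved as \Cref{SpecQ} above," i.e., by the block-companion eigenvector computation, which is precisely what you carry out (the chain $\xi_j=\lambda^{d-1-j}\xi_{d-1}$, the collapse of the first block row to $U(\lambda)\nmyp\xi_{d-1}=\lambda^d\xi_{d-1}$, and the exclusion of $U(\lambda)=0$ via $U(0)=-\alpha_0\neq 0$, the exact analogue of the $\alpha\neq 0$ hypothesis in the $d=2$ case). Your explicit treatment of both inclusions and of the degenerate case is, if anything, slightly more complete than the paper's one-line reference.
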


%We can prove that $\lambda\in \spec \myq $ if and only if there is $\delta\in \spec P$ such that $\delta=\phi_{\alpha, d}(\lambda)$, where $\phi_{\alpha, d}$ is the rational function of degree $d$ given by:
%\(\phi_{\alpha, d}(\lambda):=\frac{\lambda^d}{U(\lambda)},\)

\ifX{
%We want to ``accelerate'' the eigenvalue $\delta=1-\epsilon$ of $\nmyp$ as much as possible, like in the case of AVI (i.e.\ $d=2$).

\textcolor{blue}{We want to choose the vector of parameters $\alpha$ that leads to the smallest possible spectral radius for $\myqd$, in order to obtain the smallest asymptotic rate for~\eqref{dAVI}, like in the case of AVI (i.e.\ $d=2$).
}

\begin{lemma}\label{l:bestc}
The best choice
%The following choice 
of the parameters $\alpha_0, \cdots, \alpha_{d-2}$ that minimizes the maximum of the moduli of the preimages of $1-\epsilon$ by $\phi_{\alpha, d}$ is:
%we determine the acceleration parameters $\alpha_0, \cdots , \alpha_{d-2}$:
%in a unique way:
\begin{equation}\label{alpha_i}
\alpha_i=\binom{d}{i}\frac{(\epsilon^{1/d}-1)^{d-i}}{ (1-\epsilon)}, \quad \forall \quad i=0,\cdots, d-2 \enspace ,
\end{equation}
and it corresponds to the following rational function
\begin{align}\label{a:phistard}
\phi^*_{d}(\lambda)=\frac{(1-\epsilon)\lambda^d}{\lambda^d-(\lambda-(1-\epsilon^{1/d}))^d} \enspace .
\end{align}
\end{lemma}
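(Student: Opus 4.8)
The plan is to translate the minimization over the parameter vector $\alpha=(\alpha_0,\dots,\alpha_{d-2})$ into an optimization over monic polynomials. By~\eqref{a:phiad}, a point $\lambda$ is a preimage of $1-\epsilon$ under $\phi_{\alpha,d}$ exactly when $\lambda^d=(1-\epsilon)U(\lambda)$, i.e.\ when $\lambda$ is a root of the monic degree-$d$ polynomial $P(\lambda):=\lambda^d-(1-\epsilon)U(\lambda)$. The crucial structural fact is the single constraint imposed by $U(1)=1$: evaluating at $\lambda=1$ gives $P(1)=1-(1-\epsilon)=\epsilon$. Conversely, any monic $P$ of degree $d$ with $P(1)=\epsilon$ arises from an admissible choice of parameters, since $U:=(\lambda^d-P(\lambda))/(1-\epsilon)$ is then a polynomial of degree $\le d-1$ satisfying $U(1)=1$, from whose coefficients one reads off $\alpha_0,\dots,\alpha_{d-2}$. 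First I would record this bijection, so that the lemma reduces to the clean problem: among all monic polynomials $P$ of degree $d$ with $P(1)=\epsilon$, minimize $\max_i|\rho_i|$ over the roots $\rho_1,\dots,\rho_d$ of $P$.

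Next I would establish the lower bound. Writing $R:=\max_i|\rho_i|$ and using $P(1)=\prod_{i=1}^d(1-\rho_i)$, the reverse triangle inequality gives $\epsilon=|P(1)|=\prod_i|1-\rho_i|\ge\prod_i(1-|\rho_i|)$. If $R\le 1$ each factor is at least $1-R\ge 0$, so $\epsilon\ge(1-R)^d$, whence $R\ge 1-\epsilon^{1/d}$; if $R>1$ the inequality $R\ge 1-\epsilon^{1/d}$ is trivial. Tracking the equality cases then yields uniqueness of the optimizer: equality in $|1-\rho_i|\ge 1-|\rho_i|$ forces each $\rho_i$ real and in $[0,1]$, and equality in $1-|\rho_i|\ge 1-R$ forces $|\rho_i|=R$, so all roots must coincide at the single value $1-\epsilon^{1/d}$.

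Finally I would exhibit the optimizer and extract the parameters. The polynomial $P^*(\lambda)=(\lambda-(1-\epsilon^{1/d}))^d$ is monic of degree $d$, satisfies $P^*(1)=(\epsilon^{1/d})^d=\epsilon$, and has all roots equal to $1-\epsilon^{1/d}$, so it attains the bound $R=1-\epsilon^{1/d}$. Solving $U=(\lambda^d-P^*(\lambda))/(1-\epsilon)$ yields exactly the rational function~\eqref{a:phistard}; one checks that the leading coefficient $d(1-\epsilon^{1/d})/(1-\epsilon)$ is nonzero, so $U$ has degree $d-1$ and the preimages are genuinely $d$ in number. Expanding $(\lambda-(1-\epsilon^{1/d}))^d$ by the binomial theorem and matching the coefficient of $\lambda^i$ (for $i\le d-2$) against $-\alpha_i$ in $U$ produces $\alpha_i=\binom{d}{i}(\epsilon^{1/d}-1)^{d-i}/(1-\epsilon)$, which is~\eqref{alpha_i}.

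I expect the only delicate point to be the reduction in the first paragraph: verifying that the objective really is governed by the roots of $P$ (so that, in particular, no preimage is lost at a pole of $\phi_{\alpha,d}$) and that $P(1)=\epsilon$ is the sole constraint. Once the problem is posed as minimizing the maximal modulus of a monic polynomial's roots under a single normalization, the extremal ``all roots coincide'' solution and the binomial identification of the $\alpha_i$ are routine.
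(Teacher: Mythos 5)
Your proposal is correct and follows essentially the same route as the paper's proof: the core step in both is to evaluate the monic polynomial $\lambda^d-(1-\epsilon)U(\lambda)$ at $\lambda=1$ to obtain $\prod_i(1-\lambda_i)=\epsilon$ and then squeeze $(1-\max_i|\lambda_i|)^d\le\prod_i(1-|\lambda_i|)\le\prod_i|1-\lambda_i|=\epsilon$, forcing all roots to coincide at $1-\epsilon^{1/d}$. Your explicit framing as an optimization over monic polynomials subject to the single constraint $P(1)=\epsilon$, together with the remark about preimages possibly being lost at poles, is slightly more careful than the paper's write-up but does not change the argument.
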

\begin{proof} 
It is easy to verify that with the choice of $\alpha_0,\ldots, \alpha_{d-2}$ in~\eqref{alpha_i},  
\[
U(\lambda)=\frac{1}{1-\epsilon}\left(\lambda^d-\left(\lambda-(1-\epsilon^{1/d})\right)^d\right) \enspace ,
\]
and thus it leads to the rational function~\eqref{a:phistard}. In addition, 
$
\phi_d^*(\lambda)=1-\epsilon
$
if and only if $\left(\lambda-(1-\epsilon^{1/d})\right)^d=0$, from which we deduce that the  maximal moduli of the preimages of $1-\epsilon$ by $\phi_d^*$ is $1-\epsilon^{1/d}$.

%The  we want that these preimages have a modulus not greater than $1$.
Let $\lambda_1,\lambda_2,\cdots,\lambda_d$ be the solutions of $\phi_{\alpha, d}(\lambda)=1-\epsilon$ satisfying $\max_i |\lambda_i|\leq 1-\epsilon^{1/d}$.
%% (i.e.\ the solutions of
%% %% preimages of $1-\epsilon$ by $\phi_{\alpha, d}$), which is equivalent to
%% $\lambda^d-(1-\epsilon)U(\lambda)=0$).
Then $\lambda^d-(1-\epsilon)U(\lambda)=\prod_{i}(\lambda-\lambda_i)$ for all $\lambda\in \C$.  %First, we want that these preimages have a modulus not greater than $1$.
%so that $\myq$ can be at least stable.
By taking $\lambda=1$ we obtain that $\prod_{i}(1-\lambda_i)=\epsilon$. We have 
\begin{align*}\label{Ineqlambda}
\epsilon\leq (1-\max_i|\lambda_i|)^d \leq \prod_{i}(1-|\lambda_i|)\leq \prod_{i}|1-\lambda_i|=\epsilon \enspace .
\end{align*}
%Therefore $\max_i|\lambda_i|\geq 1-\epsilon^{1/d}$.
Therefore for all $i$, $\lambda_i = 1-\epsilon^{1/d}$ and $\phi_{\alpha, d}$ is exactly $\phi_d^*$.
%We want to minimize $\max_i|\lambda_i|$, and 
%To achieve this lower bound %for $\max_i|\lambda_i|$ %$\max_i|\lambda_i|=$ 
%$1-\epsilon^{1/d}$, we need that all the above inequalities are equalities% in (\ref{Ineqlambda}), i.e.\ $|\lambda_1|=|\lambda_2|=\cdots=|\lambda_d|$ and $1-|\lambda_i|=|1-\lambda_i|$ (i.e.\ $\lambda_i \in [0,1]$) for all $i\in[d]$. This is equivalent to $\lambda_1=\cdots=\lambda_d \in [0,1]$. So we have $(1-\lambda_1)^d=\prod_{i}(1-\lambda_i)=\epsilon$, then the roots of the polynomial $\lambda^d-(1-\epsilon)U(\lambda)$ are $\lambda_i=1-\epsilon^{1/d}$ for all $i\in[d]$. These values allow us to determine $U$ in a unique following way:
% From $\lambda^d-(1-\epsilon)U(\lambda)=\prod_{i}(\lambda-\lambda_i)=(\lambda-\lambda_1)^d$, we deduce that \[U(\lambda)=\frac{1}{1-\epsilon}(\lambda^d-(\lambda-(1-\epsilon^{1/d}))^d) \enspace .\]By expanding this formula, we determine the acceleration parameters $\alpha_0, \cdots , \alpha_{d-2}$ defined in \cref{alpha_i}.
\end{proof}
}
\fi

%We conjecture that this requires that $1-\epsilon$ has a unique preimage by $\phi_{\alpha, d}$ that is real and we denote it by $\lambda=1-u$, $u\in [0,1] $, with is equivalent to $\phi(1-u)=1-\epsilon$ and $\phi_{\alpha, d}'(1-u)=\phi_{\alpha, d}''(1-u)=...=\phi_{\alpha, d}^{(d-1)}(1-u)=0$.

%The second condition allows to determine the coefficients $\alpha_0,\cdots,\alpha_{d-2}$. We conjecture that for $n\geq d$ (already verified for $d=2,3$ and $4$), $\phi_{\alpha, d}$ is such that 
%$$\phi_{\alpha, d}((1-u)z)=(1-u^d)\frac{z^d}{z^d-(z-1)^d} , \quad \forall z \in \C $$
%So that $\phi_{\alpha, d}(1-u)=1-u^d$, this means that $u=\epsilon^{1/d}$, and $\phi_{\alpha, d}$ is given by:
%$$\phi_{\alpha, d}(z)=(1-\epsilon)\frac{\big(\frac{z}{1-\epsilon^{1/d}}\big)^d}{\big(\frac{z}{1-\epsilon^{1/d}}\big)^d-(\frac{z}{1-\epsilon^{1/d}}-1)^d}=(1-\epsilon)\frac{z^d}{z^d-(z-1+\epsilon^{1/d})^d} , \quad \forall z \in \C $$

%is sufficient to study $\phi_{\alpha, d}$.

In the following, we consider the scheme~\eqref{dAVI} implemented with the special choice of the parameters $\{\alpha_i: i=0,\cdots,d-2\}$ given in \cref{alpha_i}. %described above
We want to generalize the characterization of the accelerable region $\Sigma_\epsilon$ for the AVI algorithm
%in the case of AVI
to get the largest accelerable  region  for the $d$A-VI algorithm. 
For this purpose, for any $\epsilon \geq 0$ and $d\geq 2$, let $\Gamma_{\epsilon,d}$ be the simple closed curve defined by the parametric equation:
	\begin{equation}\label{eq:Gammaepsilond}
	\theta \mapsto \frac{ (1-\epsilon)\e^{i d \theta} }{\e^{i d \theta}-(\e^{i \theta}-1)^d} , \quad \theta \in \left(\pi - \frac{2\pi}{d},\pi + \frac{2\pi}{d}\right]\enspace.
	\end{equation}
	See an illustration in~\Cref{fig:Gamma4} for $\epsilon=0$ and $d=4$.
	\begin{figure}[htbp]%[h]
\label{fig:Gamma4}
	\centering
	\subfigure[curve $\Gamma_{\epsilon,d}$ ]{
	\label{Gammad4}
		\includegraphics[width=0.45\textwidth] {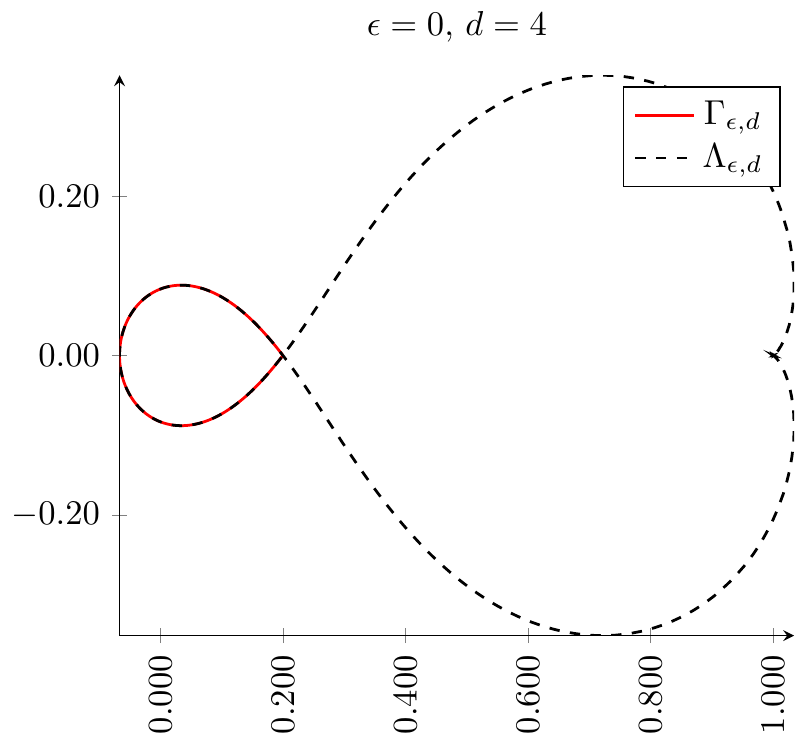}	
	}\subfigure[region $\Sigma_{\epsilon,d}$]{
	\label{Sigmad4}
		\includegraphics[width=0.45\textwidth] {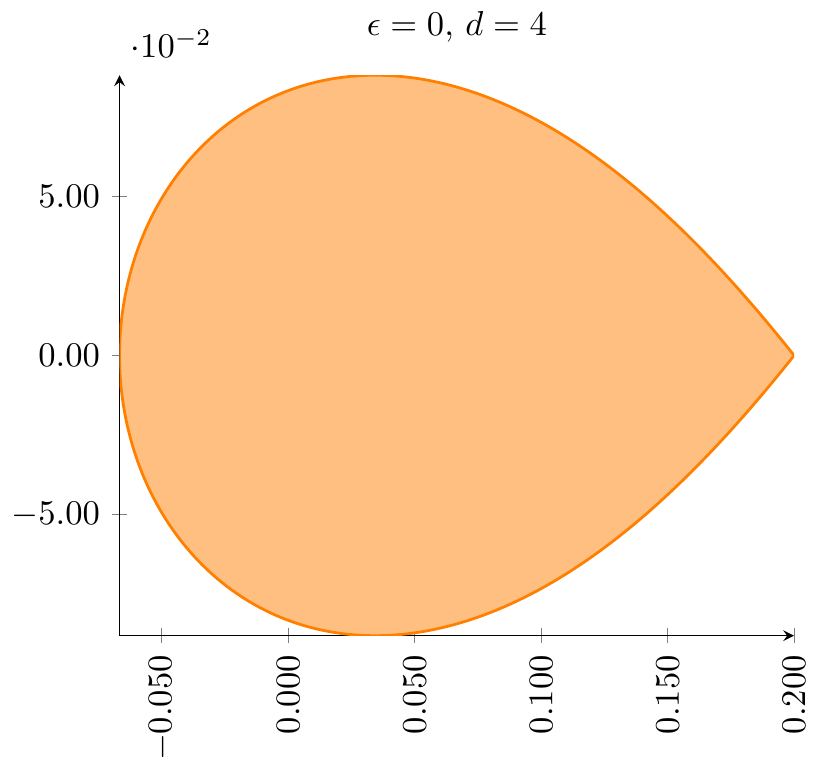}	
	}
	\caption{Ilustration of  the curve $\Gamma_{\epsilon,d}$ (the curve in red in \Cref{Gammad4}) and its enclosed region  $\Sigma_{\epsilon,d}$ (\Cref{Sigmad4}). The dashed curve in \Cref{Gammad4}
	corresponds to $\left\{ \frac{ (1-\epsilon)\e^{i d \theta} }{\e^{i d \theta}-(\e^{i \theta}-1)^d}: \theta \in (0,2\pi])\right\}$. }
	\end{figure}
	Denote by $\Sigma_{\epsilon,d}$ the compact set delimited by the  simple closed curve $\Gamma_{\epsilon,d}$. 
The following theorem identifies
conditions on the spectrum of the initial matrix $P$ which guarantee that
%the a region         , i.e. the set $\Sigma^{(d)}$ is accelerable and
the $d$A-VI algorithm converges asymptotically with a rate
%has an $d$-accelerated rate of convergence
$1-\epsilon^{1/d}$.

\begin{theorem}\label{dSigmaAcc}
%  Under the fractional function $\phi_{\alpha, d}$,
  Choosing the parameters $(\alpha_0,\dots, \alpha_{d-1})$ as in \cref{alpha_i}, we get
  that
$$\spec\myqd \subset \ball(0,1-\epsilon^{1/d}),$$ if and only if $\spec\nmyp\subset \Sigma_{\epsilon,d}\cup \{1-\epsilon\}$.
\end{theorem}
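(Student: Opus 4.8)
The plan is to reduce the statement to a pointwise question about the rational function $\phi^*_d$ of~\eqref{a:phistard} and then to solve that question by an explicit change of variables. With the parameters chosen as in~\eqref{alpha_i} the function $\phi_{\alpha,d}$ coincides with $\phi^*_d$, so by~\Cref{lem-d-SpecQ} we have $\spec\myqd=(\phi^*_d)^{-1}(\spec\nmyp)$. Writing $\rho:=1-\epsilon^{1/d}$, the inclusion $\spec\myqd\subset\ball(0,\rho)$ therefore holds if and only if, for every $\delta\in\spec\nmyp$, all solutions $\lambda$ of $\phi^*_d(\lambda)=\delta$ satisfy $|\lambda|\le\rho$. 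Hence it suffices to establish the pointwise equivalence: for a single $\delta\in\C$, all preimages of $\delta$ under $\phi^*_d$ lie in $\ball(0,\rho)$ if and only if $\delta\in\Sigma_{\epsilon,d}\cup\{1-\epsilon\}$; both directions of the theorem then follow at once.

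The key step is a change of variables linearizing the problem. Setting $c:=1-\epsilon^{1/d}=\rho$ and manipulating $\phi^*_d(\lambda)=\delta$ (for $\delta\neq0$) into $\big((\lambda-c)/\lambda\big)^d=1-(1-\epsilon)/\delta$, I would introduce $w:=1-c/\lambda$ and $\mu:=1-(1-\epsilon)/\delta$. The equation then becomes simply $w^d=\mu$, the inverse relation is $\lambda=c/(1-w)$, and the disk condition transforms into the clean equivalence $|\lambda|\le\rho\Leftrightarrow|w-1|\ge1$. Thus all preimages of $\delta$ lie in the closed disk $\ball(0,\rho)$ if and only if every $d$-th root $w$ of $\mu$ lies outside the open disk $\ball(1,1)$; the special value $\delta=1-\epsilon$ gives $\mu=0$, whose only root $w=0$ satisfies $|w-1|=1$, so it is admissible, and the degenerate case $\delta=0$ (whose sole preimage is $\lambda=0$) is handled directly.

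Next I would resolve this purely geometric question. Writing $\mu=s^d e^{i\xi}$ with $s\ge0$ and $\xi\in(-\pi,\pi]$, the roots $w_k=s\,e^{i(\xi+2\pi k)/d}$ are equally spaced on $\{|w|=s\}$, and $|w_k-1|\ge1$ is equivalent, after expanding, to $\cos(\arg w_k)\le s/2$. The largest value of $\cos(\arg w_k)$ is attained at the root whose argument is closest to $0$, which for $\xi\in(-\pi,\pi]$ is $\xi/d\in(-\pi/d,\pi/d]$; hence all roots qualify if and only if $\mu=0$ or $s\ge2\cos(\xi/d)$. This exhibits the admissible set of $\mu$ as the unbounded region $\{|\mu|^{1/d}\ge2\cos((\arg\mu)/d)\}$, together with the point $\mu=0$, which is isolated precisely when $d\ge3$ (since then $2\cos(\xi/d)\ge2\cos(\pi/d)>0$ for all admissible $\xi$, whereas for $d=2$ the point $\mu=0$ joins the region at $\xi=\pi$). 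This is exactly the source of the isolated term $\{1-\epsilon\}$ in the statement.

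Finally I would transport this back through the Möbius map $m:\mu\mapsto(1-\epsilon)/(1-\mu)$, a homeomorphism of the Riemann sphere. The essential computation is that for $\delta$ on $\Gamma_{\epsilon,d}$ one gets $\mu=1-(1-\epsilon)/\delta=(1-e^{-i\theta})^d=(2\sin(\theta/2))^d e^{id(\pi/2-\theta/2)}$, which satisfies $s=2\sin(\theta/2)=2\cos(\xi/d)$, i.e.\ lies on the boundary of the admissible region; moreover the prescribed range $\theta\in(\pi-\tfrac{2\pi}{d},\pi+\tfrac{2\pi}{d}]$ is precisely what forces $\xi/d=\pi/2-\theta/2$ into the fundamental domain $(-\pi/d,\pi/d]$, so that $\Gamma_{\epsilon,d}$ traces this boundary exactly once. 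Since $m$ sends the boundary of the admissible region onto the Jordan curve $\Gamma_{\epsilon,d}$, it sends the connected unbounded admissible region onto one closed component of the complement; as $\mu=1$ (which maps to $\delta=\infty$) is not admissible, that image is bounded and therefore equals the compact region $\Sigma_{\epsilon,d}$, while the isolated admissible point $\mu=0$ maps to $\delta=1-\epsilon$. This yields the desired set equality and hence the theorem. I expect the main obstacle to lie in this last step: verifying that $\Gamma_{\epsilon,d}$ is indeed a simple closed curve, correctly matching the closures of the regions, and cleanly separating the isolated point $1-\epsilon$ (which is genuinely distinct only for $d\ge3$); the change of variables itself makes the rest essentially routine.
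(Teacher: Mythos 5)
Your proposal is correct, and it reaches the same intermediate target as the paper --- characterizing the set of $\delta$ whose full preimage under $\phi^*_d$ lies in $\ball(0,1-\epsilon^{1/d})$, after the reduction via \Cref{lem-d-SpecQ} --- but it carries out that characterization by a genuinely different route. The paper first rescales to the normalized map $\psi_d$ (\Cref{l:pdl1}), factors $\psi_d=f_1\circ f_2\circ f_1$ with $f_1(\lambda)=\lambda/(\lambda-1)$ an inversion centred at $1$, uses $f_1(\ball(0,1))=H=\{w:\real(w)\le 1/2\}$ to rewrite the admissible set as $f_1(f_2(\cQ))$ with $\cQ=\bigcap_k e^{2k\pi i/d}H$ a polygon, and then extracts the boundary curve by an open-mapping argument (\Cref{l:erdd}, \Cref{th-multi}). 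You instead substitute $w=1-c/\lambda$ and $\mu=1-(1-\epsilon)/\delta$ directly into $\phi^*_d(\lambda)=\delta$, turning the condition into ``every $d$-th root of $\mu$ lies outside the open disk of centre $1$ and radius $1$,'' which elementary trigonometry resolves as the polar inequality $|\mu|^{1/d}\ge 2\cos((\arg\mu)/d)$ together with the isolated point $\mu=0$. This is more self-contained and computational: the parametrization of $\Gamma_{\epsilon,d}$, the correct angular window $\theta\in(\pi-\tfrac{2\pi}{d},\pi+\tfrac{2\pi}{d}]$, and the special role of $1-\epsilon$ for $d\ge 3$ all drop out of the polar description, with no need for the polygon $\cQ$; what the paper's route buys in exchange is precisely that polygon and the two-sided disk estimates of \Cref{rm:Q} leading to \eqref{ball-Sigma}, which are reused later (e.g.\ in \Cref{thm:daccle}). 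Two minor points to fix when writing this up: your window gives $\xi/d=\pi/2-\theta/2\in[-\pi/d,\pi/d)$ rather than $(-\pi/d,\pi/d]$ (immaterial, since the boundary is still traced exactly once), and in the final transport step the unbounded admissible set is the \emph{closure} of one complementary component of the Jordan curve rather than a component itself; with these adjustments the M\"obius/Jordan-curve argument you sketch does close the proof.
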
 The proof is given in the next subsection.

\subsection{Proof of Theorem~\ref{dSigmaAcc}}\label{subsec:proof}
\begin{lemma}\label{l:pdl1}
$\spec \myqd \subset \ball(0,1-{\epsilon^{1/d}})$ if and only if
\begin{align}\label{a:specP2}
\spec P \subset \{(1-\epsilon) z: \psi_d^{-1}(z)\subset \ball (0,1) \}\enspace,
\end{align}
where $\psi_d$ is the rational function defined by
\begin{equation*}
\psi_d (\lambda) = \frac{\lambda^d}{\lambda^d-(\lambda-1)^d} \enspace .
\end{equation*}
\end{lemma}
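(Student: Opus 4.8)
The plan is to reduce the equivalence to a single scaling identity between the rational map $\phi_d^*$ of~\eqref{a:phistard}, which governs the spectrum of $\myqd$, and the normalized map $\psi_d$. First I would invoke~\Cref{lem-d-SpecQ}: with the parameters chosen as in~\cref{alpha_i} we have $\phi_{\alpha,d}=\phi_d^*$ (by~\Cref{l:bestc}), so that $\spec\myqd=(\phi_d^*)^{-1}(\spec\nmyp)=\bigcup_{\delta\in\spec\nmyp}(\phi_d^*)^{-1}(\delta)$. Consequently $\spec\myqd\subset\ball(0,1-\epsilon^{1/d})$ holds exactly when, for every $\delta\in\spec\nmyp$, the entire preimage $(\phi_d^*)^{-1}(\delta)$ lies in the closed disk of radius $1-\epsilon^{1/d}$.

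The key computation is the functional identity obtained via the substitution $\lambda=(1-\epsilon^{1/d})\mu$. Since $\lambda-(1-\epsilon^{1/d})=(1-\epsilon^{1/d})(\mu-1)$, the factors $(1-\epsilon^{1/d})^d$ cancel between numerator and denominator of~\eqref{a:phistard}, leaving
\[
\phi_d^*\big((1-\epsilon^{1/d})\mu\big)=\frac{(1-\epsilon)\mu^d}{\mu^d-(\mu-1)^d}=(1-\epsilon)\,\psi_d(\mu).
\]
Because $\mu\mapsto(1-\epsilon^{1/d})\mu$ is a bijection of $\C$ carrying $\ball(0,1)$ onto $\ball(0,1-\epsilon^{1/d})$ (here $1-\epsilon^{1/d}>0$), it restricts to a bijection between $\psi_d^{-1}\big(\delta/(1-\epsilon)\big)=\{\mu:\psi_d(\mu)=\delta/(1-\epsilon)\}$ and $(\phi_d^*)^{-1}(\delta)$, transporting membership in the closed unit disk to membership in $\ball(0,1-\epsilon^{1/d})$. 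Hence $(\phi_d^*)^{-1}(\delta)\subset\ball(0,1-\epsilon^{1/d})$ if and only if $\psi_d^{-1}\big(\delta/(1-\epsilon)\big)\subset\ball(0,1)$.

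Finally I would combine the two steps and set $z=\delta/(1-\epsilon)$, i.e. $\delta=(1-\epsilon)z$: the requirement ``$\psi_d^{-1}(\delta/(1-\epsilon))\subset\ball(0,1)$ for all $\delta\in\spec\nmyp$'' is precisely the statement that $\spec\nmyp\subset\{(1-\epsilon)z:\psi_d^{-1}(z)\subset\ball(0,1)\}$, which is~\eqref{a:specP2}, closing the equivalence. The argument is essentially bookkeeping, and I anticipate no genuine obstacle: the only points requiring care are the cancellation of the $(1-\epsilon^{1/d})^d$ factors in the substitution and the fact that we work throughout with \emph{closed} disks. In particular this correctly handles the exceptional value $\delta=1-\epsilon$, whose unique preimage $\lambda=1-\epsilon^{1/d}$ lies on the boundary circle $|\lambda|=1-\epsilon^{1/d}$ (equivalently $z=1$, $\mu=1$ on the unit circle), so that it is included rather than excluded — consistent with the appearance of $\{1-\epsilon\}$ in~\Cref{dSigmaAcc}.
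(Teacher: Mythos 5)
Your argument is correct and is essentially the paper's own proof: both rest on \Cref{lem-d-SpecQ} together with the scaling identity $\phi^*_{d}((1-\epsilon^{1/d})\mu)=(1-\epsilon)\psi_d(\mu)$ (equation~\eqref{PropertyPhi} in the paper), with your version merely spelling out the resulting bijection between preimage sets more explicitly. No gap; the extra remarks on closed disks and the boundary point $\delta=1-\epsilon$ are consistent with \Cref{dSigmaAcc}.
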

\begin{proof}
We note from~\Cref{lem-d-SpecQ} that $\spec \myqd \subset \ball(0,1-{\epsilon^{1/d}})$ if and only if
\begin{align}\label{a:specP}
\spec P \subset \{ z: (\phi^*_d)^{-1}(z)\subset \ball (0,1-{\epsilon^{1/d}}) \}\enspace.
\end{align}
We note the following property:%%.......
\begin{equation}\label{PropertyPhi}
  \phi^*_{d}((1-\epsilon^{1/d})\lambda)=\frac{(1-\epsilon)\lambda^d}{\lambda^d-(\lambda-1)^d} = (1-\epsilon)\psi_d (\lambda) 
  \enspace ,
  %, \quad \forall \lambda \in \C \enspace .
\end{equation}

Hence~\eqref{a:specP} is equivalent to~\eqref{a:specP2}.
\end{proof}
%\ifX{Hence, it suffices to study the rational function
%\begin{equation*}
%\psi_d(\lambda)=\frac{\lambda^d}{\lambda^d-(\lambda-1)^d}.
%\end{equation*}
%}\fi
%We fix the order $d$  and 
We next give a description of the following set.
\begin{align}\label{a:stabled}
\cS:=\left\{z\in \C: \psi_d^{-1} (z) \subset \ball(0,1)\right\}.
\end{align}
We shall need to define 
$$
\cQ:=\bigcap_{k=0}^{d-1} e^{\frac{2k\pi i}{d}} H,
$$
where
$$
H:=\{w\in \C: \real(w)\leq 1/2\},$$
is the  half-plane containing all the complex numbers with real part smaller than $1/2$, and  $e^{\alpha i} H$  denotes the
halfspace obtained by rotating $H$ of angle $\alpha$.
\begin{lemma}\label{l:erdd}
\begin{align}
\cS= \left\{\frac{1}{1-\frac{1}{z^d}}: z\in \cQ\right\}\cup\{1\}.
\end{align}
\end{lemma}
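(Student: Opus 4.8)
The plan is to describe explicitly the $d$ preimages of a point $w$ under $\psi_d$ and to translate the membership condition $\psi_d^{-1}(w)\subset\ball(0,1)$ into a system of half-plane constraints, which will reassemble precisely into the set $\cQ$.

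First I would invert $\psi_d$. Fix $w\in\C\setminus\{0,1\}$. Clearing denominators in $\psi_d(\lambda)=w$ gives $(w-1)\lambda^d = w(\lambda-1)^d$, a polynomial of degree exactly $d$ in $\lambda$ (the coefficient of $\lambda^d$ is $(w-1)-w=-1$), so $w$ has exactly $d$ preimages. Dividing by $w\lambda^d$ rewrites this as $\bigl(1-\tfrac1\lambda\bigr)^d = 1-\tfrac1w = \tfrac{w-1}{w}$. Choosing any $d$-th root $z$ of $\tfrac{w}{w-1}$, so that $\tfrac1{z^d}=\tfrac{w-1}{w}$, the solutions are read off from $1-\tfrac1\lambda = \tfrac{\omega_k}{z}$ with $\omega_k:=e^{2k\pi i/d}$, namely
\[
\lambda_k = \frac{z}{z-\omega_k}, \qquad k=0,\dots,d-1.
\]

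Next, the modulus condition: for each $k$,
\[
|\lambda_k|\le 1 \iff |z-\omega_k|\ge|z| \iff |z|^2-2\real(\bar\omega_k z)+1\ge|z|^2 \iff \real(\bar\omega_k z)\le\tfrac12,
\]
and $\real(\bar\omega_k z)\le\tfrac12$ is exactly the statement $z\in e^{2k\pi i/d}H$. Intersecting over $k=0,\dots,d-1$ yields $\psi_d^{-1}(w)\subset\ball(0,1)\iff z\in\cQ$. The remaining point is that this condition is well posed: $\cQ$ is invariant under multiplication by any $d$-th root of unity (multiplying $z$ by $\omega_j$ merely permutes the half-planes $e^{2k\pi i/d}H$), so whether $z\in\cQ$ does not depend on the choice of $d$-th root $z$ of $\tfrac{w}{w-1}$, i.e. it depends only on $w$. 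Using $w=\tfrac1{1-1/z^d}$, which is the inverse relation to $z^d=\tfrac{w}{w-1}$, this gives $\cS\setminus\{0,1\}=\{\tfrac1{1-1/z^d}:z\in\cQ,\ z\neq0\}$.

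Finally I would treat the two degenerate values. For $w=1$ the equation reduces to $(\lambda-1)^d=0$, so $\psi_d^{-1}(1)=\{1\}\subset\ball(0,1)$ and $1\in\cS$; since $1$ is the limit of $\tfrac1{1-1/z^d}$ as $|z|\to\infty$ it is not attained by the parametrization and must be adjoined, accounting for the $\cup\{1\}$. For $w=0$ one has $\psi_d^{-1}(0)=\{0\}\subset\ball(0,1)$, so $0\in\cS$, and $0$ is the value of $\tfrac1{1-1/z^d}$ at $z=0\in\cQ$ under the convention $1/0^d=\infty$; hence $0$ belongs to the first set. Combining the three cases gives the claimed identity. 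The main obstacle I anticipate is the bookkeeping of these boundary values $w\in\{0,1\}$ together with the degenerate arguments $z\in\{0,\infty\}$, and the verification of the root-of-unity invariance of $\cQ$ that makes the parametrization consistent; the algebra in the first two steps is routine.
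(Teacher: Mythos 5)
Your proof is correct and follows essentially the same route as the paper's: the substitution $z^d = w/(w-1)$ and the root formula $\lambda_k = z/(z-\omega_k)$ are exactly the paper's decomposition $\psi_d = f_1\circ f_2\circ f_1$ (with $f_1(\lambda)=\lambda/(\lambda-1)$, $f_2(\lambda)=\lambda^d$) made explicit, and the half-plane conditions $\real(\bar\omega_k z)\le\tfrac{1}{2}$ are the paper's identity $f_1(\ball(0,1))=H\cup\{\infty\}$ applied root by root. Your direct computation is a clean, slightly more elementary rendering of the same argument, and your explicit bookkeeping of the degenerate values $w\in\{0,1\}$ and of the root-of-unity invariance of $\cQ$ (which the paper handles implicitly via the set $\bar\cQ$) is sound.
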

\begin{proof}
We define two self-maps of the extended complex plane $\bar \C$:
%\todo{SG: edited}
\begin{align}
&f_1(\lambda):=
\frac{\lambda}{\lambda-1} \enspace ,\\
%   \frac{1}{1-\frac{1}{\lambda}}\\
& f_2(\lambda):=\lambda^d \enspace .
\end{align}
Note that
\[
f_1(\lambda) = 1 + \frac{1}{\lambda-1},
\]
which entails that $f_1$ is an inversion of center $1$.
%\todo{SG: added}
In particular, $f_1\circ f_1(\lambda)=\lambda$ for any $\lambda \in \bar \C$.
It is easy to see that
\begin{align}\label{a:psif12}
\psi_d(\lambda)=f_1\circ f_2 \circ f_1 (\lambda),\enspace \forall
\lambda\in \bar\C.
\end{align}
Hence  we know that
\begin{equation}\label{q:cS}
\begin{array}{ll}
\cS&=\left\{z\in  \C: \psi_d^{-1}(z)\subset \ball(0,1)\right\}\\
&=\left\{z\in \C: f^{-1}_1 \left(f_2^{-1} \left(f_1^{-1}(z)\right)\right) \subset \ball(0,1)\right\}\\ 
&=\left\{z\in \C: f_2^{-1} \left(f_1^{-1}(z)\right)\subset  f_1(\ball(0,1))\right\}\\
&=\left\{f_1(w)\in \C: f_2^{-1} \left(w\right)\subset  f_1(\ball(0,1))\right\},
\end{array}
\end{equation}
where the second equality used~\eqref{a:psif12}, the third equality relies on the bijection property of $f_1$ and the last equality applies the change of variable $w=f_1^{-1}(z)$.

 Now we characterize the set $f_1(\ball(0,1))$. Note that $z= f_1(w)$ if and only if $\frac{1}{z}+\frac{1}{w}=1$. Thus there is $w\in \ball(0,1)$ such that $z=f_1(w)$   if and only if  
	$|1-\frac{1}{z}|\geq 1$. We then deduce that
	\begin{align}\label{a:f1B01}
	f_1(\ball(0,1))=\left\{w \in  \bar\C: |w-1|\geq |w|\right\}.
	\end{align}
	Note that
	$$
	\left\{w \in  \C: |w-1|\geq |w|\right\}=\{w\in \C: \real(w)\leq 1/2\}=H.
	$$Indeed, it is known that a circle passing through the center of an inversion
is sent to a line by this inversion, and the disk
delimited by the circle is send to a half-plane. We conclude that
 \begin{align}\label{a:f1B012}
 f_1(\ball(0,1))=H\cup \{\infty\}.
 \end{align}
Plugging~\eqref{a:f1B012} into~\eqref{q:cS} we obtain
\begin{equation}\label{q:cS2}
\begin{array}{ll}
\cS=\left\{f_1(w)\in \C: f_2^{-1} \left(w\right)\subset  H\cup \{\infty\}\right\}.
\end{array}
\end{equation}
It remains to characterize 
the set
$$
\left\{w\in \bar\C: f_2^{-1}(w) \subset H\cup \{\infty\}\right \}=\left\{w\in \C: f_2^{-1}(w) \subset H\right \}\cup\{\infty\}.
$$
Define:
	\begin{align}\label{a:defQ-old}
	\bar \cQ:=\left\{z\in \C: f_2^{-1}(f_2(z)) \subset H\right \}.
	\end{align}
It is easy to see that:
$$
\left\{w\in \C: f_2^{-1}(w) \subset H\right \}=\left\{f_2(z): z\in \bar\cQ\right \}
$$
 It follows that
\begin{align}\label{a:HQ}
\left\{w\in \bar\C: f_2^{-1}(w) \subset H\cup \{\infty\}\right \}=\left\{ f_2(z): z\in  \bar \cQ\right\}\cup \{\infty\}.
\end{align}
Finally plugging~\eqref{a:HQ} into~\eqref{q:cS2}  we obtain that
\begin{align}
\cS=\left\{f_1(f_2(z))\in \C: z\in \bar\cQ\right\}\cup \{1\}= \left\{\frac{1}{1-\frac{1}{z^d}}: z\in \bar\cQ\right\}\cup\{1\}.
\end{align}
Since for any $z\in\C$, 
$$
	f_2^{-1}\left(f_2 (z)\right)=\left\{z, e^{-\frac{2\pi i}{d}}z,\dots, e^{-\frac{2(d-1)\pi i}{d}}z\right\},
	$$
	we obtain
	\begin{align}\label{a:defQ}
	\bar\cQ=\left\{ z \in\C: \left\{z, e^{-\frac{2\pi i}{d}}z,\dots, e^{-\frac{2(d-1)\pi i}{d}}z\right\}\subset H\right \}.
	\end{align}
	Therefore, $\bar\cQ$ is actually the intersection of $d$  halfspaces obtained by rotating $H$ of angles $\frac{2k\pi}{d}$ for $k=0,\dots ,d-1$. Namely,  
$$
\bar\cQ=\cQ.
$$

~
\end{proof}

\begin{remark}\rm\label{rm:Q}
	For $d=2$, $\cQ$ is the set of complex numbers with real part in $[-1/2,1/2]$. For $d\geq 3$, $\cQ$ is a regular polygon with $d$ vertices which circumscribes the disk $\ball(0,1/2)$, see \Cref{fig:Q} for illustration from $d=2$ to $d=5$.  In particular we have $\ball(0,1/2)\subset \cQ\subset \ball(0,1/(2\cos(\pi/d)))$ and $\cQ$ asymptotically approximates $\ball(0,1/2)$ when $d\rightarrow +\infty$. It follows that
	%\todo{OS: we can maybe add $\cQ\subset \ball(0,1/(2\cos(\pi/d))$}
	$$
	\left\{\frac{1}{1-\frac{1}{z^d}}: z\in \ball(0,1/2)\right\} \cup \{1\} \subset \cS\subset \left\{\frac{1}{1-\frac{1}{z^d}}: z\in \ball(0,1/(2\cos(\pi/d)))\right\}\cup \{1\} \enspace.
	$$
	%$$\todo{OS: I should add a set $S'_d$ for $\ball(0,1/(2\cos(\pi/d))$.}
	 Note that  for any $a>1$,
	$$
        \left\{\frac{1}{1-\frac{1}{z^d}}: z\in \ball(0,a)\right\} =   \left\{\frac{1}{1-\frac{1}{w}}: w\in \ball(0,a^d)\right\}=  \left\{ \frac{1}{z}: |z-1|\geq a^d \right\}.
	$$
	and thus, 
	$$
	\ball(0, 1/(a^d+1))\subset  \left\{\frac{1}{1-\frac{1}{z^d}}: z\in \ball(0,a)\right\} \subset \ball(0, 1/(a^d-1)).
	$$
	This allows to deduce the following estimation of the region $\cS$.
	\begin{equation}\label{ball-Sigma}
	\ball\left(0,\frac{1}{2^d+1}\right)\cup \{1\} \subset \cS \subset \ball\left(0,\frac{1}{(2\cos(\pi/d))^d-1}\right) \cup \{1\}.
	\end{equation}
	%\todo{OS: I think it should be $B\left(0,\frac{1}{2^d+1}\right) \cup \{1\} \subset \cS\subset B\left(0,\frac{1}{(2\cos(\pi/d))^d-1}\right) \cup \{1\}$ ?}
\end{remark}
\begin{figure}[htbp]%[h]
\label{fig:Q}
	\centering
	\subfigure[d=2]{
		\includegraphics[width=0.45\textwidth] {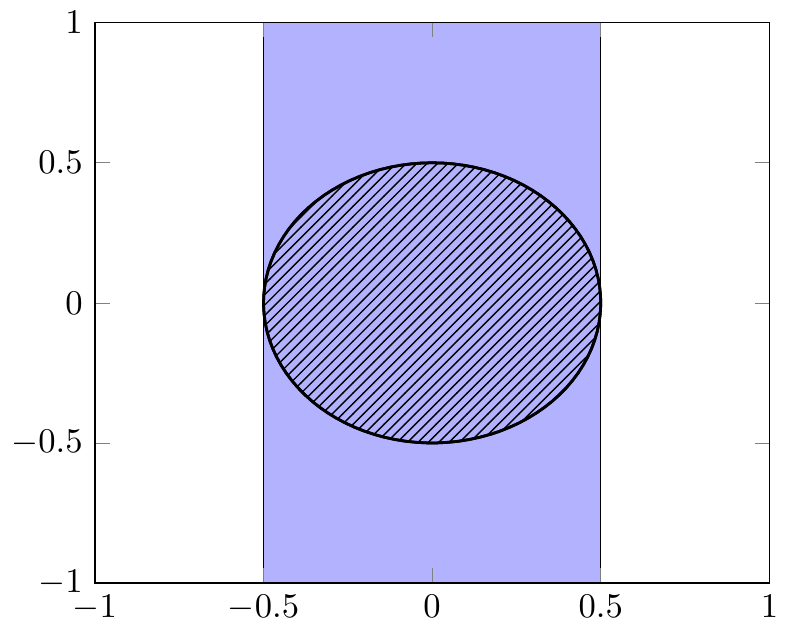}	
	}\subfigure[d=3]{
		\includegraphics[width=0.45\textwidth] {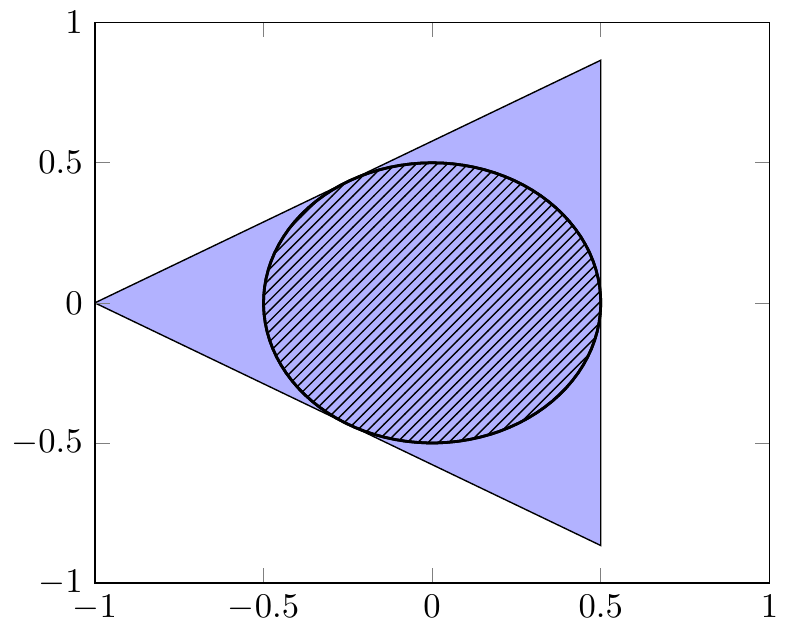}	
	}
	\subfigure[d=4]{
		\includegraphics[width=0.45\textwidth] {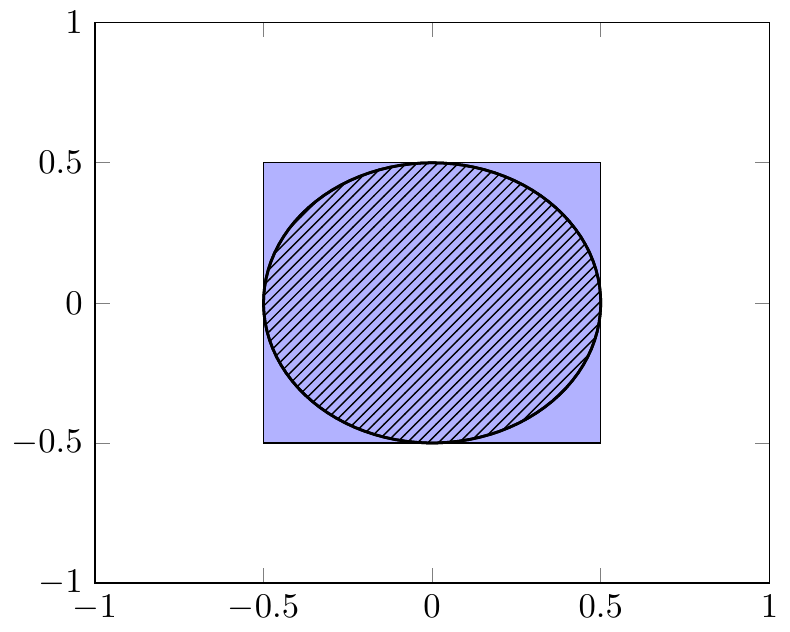}
	}
	\subfigure[d=5]{
		\includegraphics[width=0.45\textwidth] {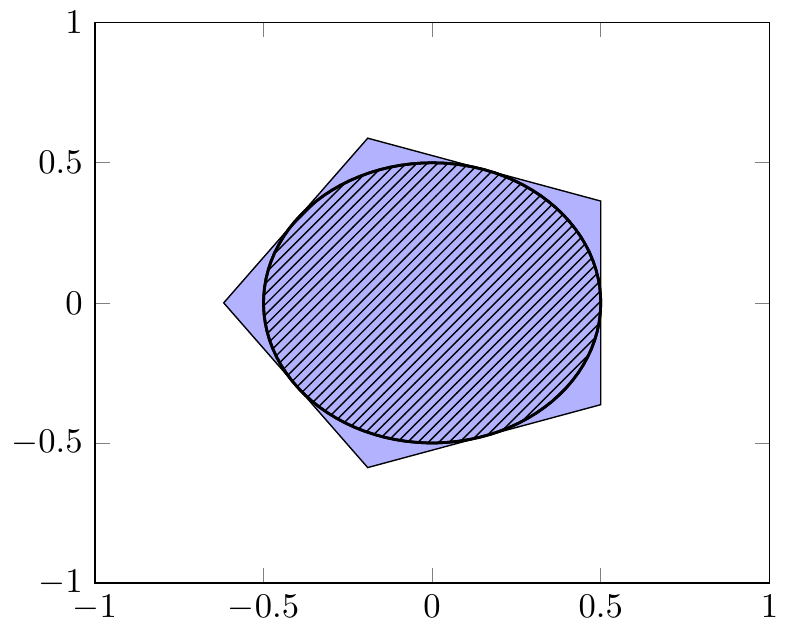}
	}
	\caption{Ilustration of  $\cQ$ (the region in blue) and the circumscribed disk $\ball(0,1/2)$ (the dashed region).}
	\end{figure}
%In Section~\ref{} we have shown that the closed set delimited by the curve $\Gamma_\epsilon$ (Figure~\ref{Gamma}) defined by the parametric equation:
	%\[
	%\theta \mapsto \frac{(1-\epsilon)\e^{2i \theta}}{2 \e^{i \theta}-1} \quad, \quad \theta \in [0,2 \pi] \enspace.
	%\]
	 Next we characterize the boundary of the accelerable region $\cS$.  \textcolor{blue}{We denote by $\Bd S$ the boundary of a set $S$}.
	\begin{proposition}\label{th-multi}We have
	\begin{equation}\label{eq-P-Sigma}
\cS=\Sigma_{0,d} \cup \{1\}
\end{equation} 
where $\Sigma_{0,d}$ is the compact set of the complex plane  delimited by the simple closed curve $\Gamma_{0,d}$ as defined in~\eqref{eq:Gammaepsilond}.
	\end{proposition}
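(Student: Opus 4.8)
The plan is to start from the description of $\cS$ provided by \Cref{l:erdd}, namely $\cS=\{\varphi(z):z\in\cQ\}\cup\{1\}$, where $\varphi:=f_1\circ f_2$ with $f_1(\lambda)=\lambda/(\lambda-1)$ and $f_2(\lambda)=\lambda^d$ (the maps from the proof of \Cref{l:erdd}), so that $\varphi(z)=1/(1-z^{-d})$ and $\psi_d=f_1\circ f_2\circ f_1$. The whole problem then reduces to proving that $\varphi(\cQ)$ equals $\Sigma_{0,d}$ up to the single point $1$. Observe first that $\varphi$ is holomorphic on a neighbourhood of $\cQ$: its only poles are the $d$-th roots of unity, and none lies in $\cQ$, since a root $e^{2\pi i\ell/d}$ violates the defining inequality of $\cQ$ for the index $k=\ell$, as $\real(e^{-2\pi i\ell/d}e^{2\pi i\ell/d})=1>1/2$. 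Observe also that $1$ is never attained by $\varphi$, yet $1\in\cS$ because $\psi_d^{-1}(1)=\{1\}\subset\ball(0,1)$; this is exactly the isolated point that \Cref{l:erdd} adjoins.

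The key step is to compute $\varphi(\partial\cQ)$. By \Cref{rm:Q}, $\cQ$ is invariant under the rotation $z\mapsto e^{2\pi i/d}z$, and so is $\varphi$ (it depends on $z$ only through $z^d$); hence $\varphi$ sends all $d$ edges of $\partial\cQ$ to one and the same set, and it suffices to treat the edge $E_0:=\partial\cQ\cap\{\real(w)=1/2\}$, the segment joining the vertices $v_\pm=e^{\pm i\pi/d}/(2\cos(\pi/d))$. Writing a point of the line $\real(w)=1/2$ as $w=(1-e^{i\phi})^{-1}=f_1(e^{-i\phi})$, one checks that $v_+$ and $v_-$ correspond to $\phi=\pi-2\pi/d$ and $\phi=\pi+2\pi/d$. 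Since $\varphi=f_1\circ f_2$ and $\psi_d=f_1\circ f_2\circ f_1$, this gives $\varphi(w)=\psi_d(e^{-i\phi})$, so that, after the substitution $\theta=-\phi \pmod{2\pi}$ and by definition~\eqref{eq:Gammaepsilond},
\[
\varphi(E_0)=\{\psi_d(e^{-i\phi}):\phi\in[\pi-2\pi/d,\pi+2\pi/d]\}=\{\psi_d(e^{i\theta}):\theta\in(\pi-2\pi/d,\pi+2\pi/d]\}=\Gamma_{0,d}.
\]
The same computation shows $\varphi$ is injective along $E_0$ apart from $\varphi(v_+)=\varphi(v_-)=1/(1+(2\cos(\pi/d))^d)$, because $w\mapsto w^d$ can identify two points of $E_0$ only when their arguments differ by a nonzero multiple of $2\pi/d$, which on $E_0$ occurs solely for the pair $v_\pm$; this re-proves that $\Gamma_{0,d}$ is a simple closed curve.

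It remains to pass from the boundary to the full image. As $\varphi$ is holomorphic and nonconstant, it is an open map, hence $\Bd\,\varphi(\cQ)\subseteq\varphi(\partial\cQ)=\Gamma_{0,d}$. Moreover $\varphi(\cQ)$ is connected and contains $\varphi(0)=0$, an interior point of $\Sigma_{0,d}$. By the Jordan curve theorem $\C\setminus\Gamma_{0,d}$ consists of the bounded component (the interior of $\Sigma_{0,d}$) and the unbounded one; a connected set whose boundary lies in $\Gamma_{0,d}$, which meets the bounded component and is itself bounded, must equal $\Sigma_{0,d}$. For $d\ge3$, $\cQ$ is the compact regular $d$-gon of \Cref{rm:Q}, so $\varphi(\cQ)$ is compact and we conclude $\varphi(\cQ)=\Sigma_{0,d}$. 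For $d=2$, $\cQ$ is the unbounded strip $\{|\real z|\le1/2\}$ and $\varphi(z)\to1$ as $z\to\infty$; applying the argument to the continuous extension of $\varphi$ to $\cQ\cup\{\infty\}$, with $\varphi(\infty)=1\in\Gamma_{0,2}$, yields $\varphi(\cQ)\cup\{1\}=\Sigma_{0,2}$. In both cases $\varphi(\cQ)\cup\{1\}=\Sigma_{0,d}\cup\{1\}$, that is $\cS=\Sigma_{0,d}\cup\{1\}$.

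The delicate point is this last, topological step. Two features require care. First, $\varphi$ is $d$-to-$1$ on $\cQ$ (the $f_2$-fibre through a point of $\cQ$ is a full rotation orbit lying in $\cQ$), so one cannot argue through a naive homeomorphism; the clean remedy is to restrict to a fundamental sector $\{-\pi/d<\arg z\le\pi/d\}\cap\cQ$, on which $\varphi$ is injective, and to note that its two radial edges both fold onto the real segment $[0,\varphi(v_+)]$, interior to $\Sigma_{0,d}$ and therefore contributing nothing to $\Bd\,\varphi(\cQ)$. Second, the point $1$ and, for $d=2$, the unboundedness of $\cQ$, have to be handled via the behaviour of $\varphi$ at infinity; tracking $\varphi(\infty)=1$ together with $\psi_d^{-1}(1)=\{1\}\subset\ball(0,1)$ is what reconciles the adjoined point of \Cref{l:erdd} with the fact that $1$ lies on $\Gamma_{0,2}$ when $d=2$ but strictly outside $\Sigma_{0,d}$ when $d\ge3$.
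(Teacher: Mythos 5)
Your argument is correct and follows essentially the same route as the paper: both start from \Cref{l:erdd} and identify $\Gamma_{0,d}$ as the image of $\Bd \cQ$ under $f_1\circ f_2$ by writing the boundary lines of $\cQ$ as $f_1$ of arcs of the unit circle, so that the composite becomes $\psi_d$ restricted to the unit circle. Your wrap-up via openness, the Jordan curve theorem and connectedness is a slightly more explicit version of the paper's boundary identification $\Bd f_1(f_2(\cQ)) = f_1(f_2(\Bd \cQ))$, and your separate treatment of the point at infinity for $d=2$ makes precise a step the paper leaves implicit.
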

\begin{proof}
Since $f_2$ is holomorphic and thus open, it sends the interior of $\cQ$ into the interior of $f_2(\cQ)$. It follows that $\Bd f_2(\cQ)\subset f_2(\Bd \cQ)$. 
By the continuity of $f_2$, for any $z\in \Bd\cQ$ and any $\epsilon>0$, there is $\delta>0$ such that
$$
f_2(\ball(z, \delta))\subset \ball(f_2(z),\epsilon).
$$
Since $z\in \Bd\cQ$,  $\ball(z, \delta) \cap \cQ^c\neq \emptyset$ and thus $f_2(\ball(z, \delta) ) \cap  f_2( \cQ^c)\neq \emptyset$.we note from the definition~\eqref{a:defQ} that 
$$
f_2(\cQ)\cap f_2(\cQ^c)=\emptyset.
$$
Thereby $f_2(\ball(z, \delta) ) \cap (f_2(\cQ))^c\neq \emptyset$ and $\ball(f_2(z),\epsilon)\cap (f_2(\cQ))^c\neq \emptyset$. This shows that $f_2(z)\in \Bd(f_2(\cQ))$ and thus $  f_2(\Bd \cQ)\subset\Bd f_2(\cQ)$. We thus proved that\begin{align}\label{a:Bdf2}
\Bd f_2(\cQ)=f_2(\Bd \cQ).
\end{align}
Since $f_1:\bar \C\rightarrow \bar \C$ is a homeomorphism, we know that
\begin{align}\label{a:Bdf1f2}
\Bd f_1(f_2(\cQ))=f_1(\Bd f_2(\cQ))\overset{\eqref{a:Bdf2}}{=}f_1(f_2 (\Bd \cQ)).
\end{align}
As mentioned in Remark~\ref{rm:Q}, for $d=2$,  $\cQ$ is the set of complex numbers with real part in $[-1/2,1/2]$ and the boundary of $\cQ$ can be described as follows:
$$
\Bd \cQ=\left\{\frac{ \pm  (1+i\tan \theta)}{2}:\theta\in \left(-\frac{\pi}{2},\frac{\pi}{2}\right) \right\}.
$$
For $d\geq 3$, $\cQ$ is the regular convex polygon with boundary given by the simple closed curve:
$$
\Bd  \cQ=\left\{\frac{   e^{\frac{2k\pi i}{d}}   (1+i\tan \theta)}{2}:\theta\in \left(-\frac{\pi}{d},\frac{\pi}{d}\right ], k\in\{0,\dots,d-1\}\right\}.
$$
Since
\begin{align*}\frac{1}{1-e^{-i\theta}}&=\frac{1}{1-\cos \theta +i \sin \theta}=\frac{1-\cos \theta -i\sin \theta}{2-2\cos\theta} \\&=\frac{1}{2}-\frac{i \sin\theta}{2(1-\cos\theta)}=\frac{1}{2}+\frac{i \sin(\theta+\pi)}{2(1+\cos(\theta+\pi))}\\&
=\frac{ (1+i\tan\frac{ \theta+\pi}{2})}{2},
\end{align*}
we obtain another representation of $\Bd\cQ$:
\begin{equation}\label{eq:BdcQ}
\Bd \cQ=\left\{\begin{array}{ll}
\left\{\frac{ \pm 1 }{ 1-e^{-i\theta}}:\theta\in \left(0,2\pi\right) \right\} & \mathrm{~if~} d=2\\
\left\{\frac{   e^{\frac{2k\pi i}{d}}  }{1-e^{-i\theta}}:\theta\in \left(\pi-\frac{2\pi}{d},\pi+\frac{2\pi}{d}\right ], k\in\{0,\dots,d-1\} \right\},& \mathrm{~if~} d\geq 3
\end{array}\right.
\end{equation}
Plugging ~\eqref{eq:BdcQ} into~\eqref{a:Bdf1f2}  we obtain that
\begin{equation}\label{eq:BdcQ2}
\Bd f_1 (f_2 (\cQ))=\left\{\begin{array}{ll}
\left\{\frac{ 1 }{1-( 1-e^{-i\theta})^2}:\theta\in \left(0,2\pi\right) \right\} & \mathrm{~if~} d=2\\
\left\{\frac{  1  }{1-(1-e^{-i\theta})^d}:\theta\in \left(\pi-\frac{2\pi}{d},\pi+\frac{2\pi}{d}\right ] \right\} & \mathrm{~if~} d\geq 3
\end{array}\right.
\end{equation}
Therefore, define the set
\begin{equation}\label{eq:Sigmad}
\Sigma_{0,d}:=\left\{\begin{array}{ll}
f_1(f_2(\cQ))\cup\{1\} & \mathrm{~if~} d=2\\
f_1(f_2(\cQ)) & \mathrm{~if~} d\geq 3
\end{array}\right.
\end{equation}
Then  we have~\eqref{eq-P-Sigma} and 
\begin{equation}\label{eq:BdcQ3}
\Bd \Sigma_{0,d}=\left\{\begin{array}{ll}
\left\{\frac{ 1 }{1-( 1-e^{-i\theta})^2}:\theta\in \left(0,2\pi\right] \right\} & \mathrm{~if~} d=2\\
\left\{\frac{  1  }{1-(1-e^{-i\theta})^d}:\theta\in \left(\pi-\frac{2\pi}{d},\pi+\frac{2\pi}{d}\right ] \right\} & \mathrm{~if~} d\geq 3
\end{array}\right.
\end{equation}
which can be written as
$$\Bd \Sigma_{0,d}=\left\{ \frac{\e^{i d \theta} }{\e^{i d \theta}-(\e^{i \theta}-1)^d} , \quad \theta \in \left(\pi - \frac{2\pi}{d},\pi + \frac{2\pi}{d}\right]\right\}\enspace.
$$
 for any $d\geq 2$. 
Finally the compactness of $\Sigma_{0,d}$ follows from the compactness of $\cS$, which can be easily seen from the fact that  $\cS\subset  \psi_d\left (\ball (0,1)\right)$ by the definition~\eqref{a:stabled}.
\end{proof}

%the simply connected set containing $0$ 
%the simple closed curve (sub-curve of $\Lambda^{\epsilon, (d)}$) given by  

%with boundary given by the set $\Gamma^{\epsilon,d}$.

%\todo{ZQ: The title of Figure~\ref{Gamma4} need to be adjusted. OS: I just did it.}

\begin{proof}[Proof of Theorem~\ref{dSigmaAcc}]
This follows directly from~\eqref{a:specP2},~\eqref{a:stabled} and~\eqref{eq-P-Sigma}.
	%Let $x\in \Sigma^{\epsilon,(d)}\cup \{1-\epsilon\}$. By \Cref{eq-P-Sigma}, this is equivalent to $\frac{x}{1-\epsilon}\in \left\{
	%\frac{1}{1-\frac{1}{w^d}} : w\in \cP
	%\right\}$. By \Cref{coro-psi-P}, it's equivalent to $\psi_d^{-1}(\frac{x}{1-\epsilon})\subset \ball(0,1)$, by \Cref{eq-phi-psi}, this is equivalent to $\phi_{\alpha, d}^{-1}(x)/(1-\epsilon^{1/d})\subset \ball(0,1)$, and this means that $\phi_{\alpha, d}^{-1}(x)\subset \ball(0,1-\epsilon^{1/d})$.
	%Finally, we finish the proof by using \Cref{lem-d-SpecQ}.
\end{proof}

\begin{remark}\rm
In Theorem~\ref{dSigmaAcc}, the region $\Sigma_{\epsilon,d}$ assured to be accelerable {\em does not} contain some part of the real interval $[0,1-\epsilon]$ for any $d\geq 3$.
%% contains only the set $\Sigma^{(d)}$ that did not contain all the non negative eigenvalues $[0,1-\epsilon]$.
This is consistent with Theorem 2.2.12 of \cite{Nesterovbook}
implying that for a linear recurrent scheme with finite memory
calling the oracle $T$, the geometric convergence rate
cannot be smaller than $1-O(\kappa^{-1/2})$ where $\kappa$ is
a condition number, corresponding to $\epsilon^{-1}$ here.
\end{remark}
%\todo[color=blue!30]{SG: I edited this para trying to answer to Zheng}

%% We recover the fact that we cannot get a rate smaller than $1-O(\epsilon^{1/2})$
%% for general symmetric matrices by linear recurrent schemes with finite memory,
%% a fact already
%% proved by Nesterov.
%% in~\cite{Nesterovbook}.
%% do more than a single acceleration for some symmetric matrices as shown by Nesterov.
%% %\cite{nesterov83}.

%% Theorem~\ref{dSigmaAcc} ensures also that the point $1-\epsilon$ is
%% accelerable at the rate $1-\epsilon^{1/d}$.

\subsection{Robustness of the acceleration}
Note that the parameters $(\alpha_0,\dots, \alpha_{d-1})$ defined in \cref{alpha_i} requires the knowledge of $\epsilon$ thus  of the  exact value of the spectral radius of $P$, which may be a restrictive assumption for practitioners. 
\textcolor{blue}{For example, in the stochastic shortest path problem analyzed in~\cite{bertsekas1991}, we do not know the spectral radii of the substochastic matrices arising in the restricted contracting operator described in Proposition~1 of~\cite{bertsekas1991}.}
In this section we evaluate how the small perturbation of $\epsilon$ will affect the order of convergence of the acceleration scheme. This in particular allows the use of an approximate value of $\epsilon$ to compute the parameters $(\alpha_0,\dots, \alpha_{d-1})$ 
while still achieving an asymptotic convergence rate of order  $1-\Omega(\epsilon^{1/d})$.
 %convergence    may not be known so that 
%Now, we want to evaluate how much the right edge of the spectrum $\spec\nmyp$ can move away from $1-\epsilon$, still keeping
%an acceleration of order
%and still have an acceleration even if it's less good than
%$1-\Omega(\epsilon^{1/d})$.

For $h\geq 0$, we are looking for the smallest radius $g_\epsilon(h)\geq 0$ such that
% $\forall \delta\in \spec(P)$, $\delta\in \ball(1-\epsilon,h)$ implies that
$\left(\phi^*_{d}\right)^{-1}(\ball(1-\epsilon,h))\subset \ball(1-\epsilon^{1/d},g_\epsilon(h))$, and we enforce $g_\epsilon(h)\leq \epsilon^{1/d}$ to preserve the acceleration. First we make this analysis for $\psi_d$ (i.e.\ $\epsilon=0$).%\todo{ZQ: Note that in the short version, $\psi_d$ does appear before. OS: I just defined it in (\ref{PropertyPhi})}
% in the following lemma
\begin{lemma}\label{Tolerance0}
	For $h\geq 0$, the smallest nonnegative real number $g_0(h)$ such that $$\psi_d^{-1}(\ball(1,h)) \subset \ball(1,g_0(h))$$ is %given by
	\[
	g_0(h)=\frac{h^{1/d}}{(1+h)^{1/d}-h^{1/d}}, \quad \forall h\geq 0 \enspace .
	\]
\end{lemma}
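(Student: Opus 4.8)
The plan is to recast $g_0(h)$ as an extremal quantity and then reduce the whole computation to finding the largest disk centered at $1$ that is contained in a sublevel set of the map $q\mapsto|q^d-1|$. First I would record the elementary identity $\psi_d(\lambda)-1=(\lambda-1)^d/(\lambda^d-(\lambda-1)^d)$, so that, writing $t=\lambda-1$, the condition $\psi_d(\lambda)\in\ball(1,h)$ becomes $|\eta(t)|\le h$ with $\eta(t):=t^d/((1+t)^d-t^d)$. Since $\ball(1,g_0(h))=\{\,1+t:|t|\le g_0(h)\,\}$, the smallest admissible radius is exactly $g_0(h)=\sup\{\,|t|:|\eta(t)|\le h\,\}$. (The point $\lambda=\infty$ is harmless: $\psi_d(\infty)=\infty\notin\ball(1,h)$.)

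Next I would perform the change of variable $q=1+1/t$, which is precisely the inversion $f_1$ underlying the factorization $\psi_d=f_1\circ f_2\circ f_1$ in~\eqref{a:psif12}. A direct substitution gives $\eta=1/(q^d-1)$ and $|t|=1/|q-1|$, so that $|\eta|\le h$ is equivalent to $|q^d-1|\ge 1/h$ and hence
$$
g_0(h)=\frac{1}{\inf\{\,|q-1|:|q^d-1|\ge 1/h\,\}}\enspace.
$$
The remaining task is to show that this infimum equals $(1+1/h)^{1/d}-1$, i.e.\ that the largest open disk $\ball(1,\rho)$ contained in $\{\,|q^d-1|<1/h\,\}$ has radius $\rho$ determined by $(1+\rho)^d-1=1/h$.

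The heart of the argument, and the step I expect to need the most care, is the extremal estimate that on the circle $|q-1|=\rho$ one has $\max|q^d-1|=(1+\rho)^d-1$, attained at the real point $q=1+\rho$. I would prove it from the factorization $q^d-1=(q-1)\sum_{j=0}^{d-1}q^j$, together with the bound $|q|\le 1+\rho$ valid on that circle: the triangle inequality yields $|\sum_{j=0}^{d-1}q^j|\le\sum_{j=0}^{d-1}|q|^j\le\sum_{j=0}^{d-1}(1+\rho)^j=((1+\rho)^d-1)/\rho$, whence $|q^d-1|\le(1+\rho)^d-1$, with equality precisely when $q=1+\rho$ since then every power $q^j$ is a positive real and both inequalities become equalities. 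By the maximum principle this also shows $\ball(1,\rho)\subset\{\,|q^d-1|<1/h\,\}$ if and only if $(1+\rho)^d-1\le 1/h$, so the distance from $1$ to $\{\,|q^d-1|\ge 1/h\,\}$ is exactly $(1+1/h)^{1/d}-1$.

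Finally I would collect the pieces: $g_0(h)=1/\rho$ with $\rho=(1+1/h)^{1/d}-1$. Using $(1+1/h)^{1/d}=(1+h)^{1/d}/h^{1/d}$ gives $\rho=((1+h)^{1/d}-h^{1/d})/h^{1/d}$, and therefore $g_0(h)=h^{1/d}/((1+h)^{1/d}-h^{1/d})$, as claimed. I would also check the degenerate endpoint $h=0$ separately: there $\psi_d^{-1}(\{1\})=\{1\}$ and the formula correctly returns $g_0(0)=0$, so the statement holds for all $h\ge 0$.
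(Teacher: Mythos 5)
Your proposal is correct and follows essentially the same route as the paper: both reduce, via the inversion $q=1+1/t$ (equivalently $\psi_d(1+\lambda)-1=1/((1+1/\lambda)^d-1)$), to the sharp inequality $|q^d-1|\le(1+|q-1|)^d-1$ with equality exactly at the real point, the only cosmetic difference being that you derive this inequality from the factorization $q^d-1=(q-1)\sum_{j=0}^{d-1}q^j$ and a geometric sum, whereas the paper expands $(1+1/\lambda)^d-1$ binomially and applies the triangle inequality term by term. The appeal to the maximum principle is unnecessary (the pointwise bound already settles the open/closed bookkeeping), but nothing is missing.
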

\ifX
\begin{proof}
	For $h=0$, it follows from $\psi_d^{-1}(1)=\{1\}$.
	
	Now let $h>0$, we want to have $\psi_d(\ball(1,g_0(h))^c) \subset \ball(1,h)^c$, i.e.:
	\begin{equation}\label{I1}
	|\lambda|> g_0(h) \Rightarrow |\psi_d(1+\lambda)-1|> h,\enspace \forall \lambda\in \C.
	\end{equation}
	We have $\psi_d(1+\lambda)=\frac{(1+\lambda)^d}{(1+\lambda)^d-\lambda^d}=1+\frac{\lambda^d}{(1+\lambda)^d-\lambda^d}=1+\frac{1}{(1+\frac{1}{\lambda})^d-1}$, then
	$$\left |\psi(1+\lambda)-1\right |> h \Leftrightarrow  \left |\left(1+\frac{1}{\lambda}\right)^d-1\right |<\frac{1}{h},\enspace \forall \lambda\in \C.
	$$
For any $\lambda\in \C$ we know that
	\begin{equation}\label{I2}
	\left |\left(1+\frac{1}{\lambda}\right)^d-1\right |=\left |\sum_{k=1}^{d}\binom{d}{k}\frac{1}{\lambda^k }\right|\leq \sum_{k=1}^{d}\binom{d}{k}\frac{1}{|\lambda|^k}=\left(1+\frac{1}{|\lambda|}\right)^d-1	.
	\end{equation}
	Thus 
	$$ |\lambda|>\frac{h^{1/d}}{(1+h)^{1/d}-h^{1/d}} \Leftrightarrow \left (1+\frac{1}{|\lambda|}\right)^d-1<\frac{1}{h}\Rightarrow \left |\psi(1+\lambda)-1\right |> h,\enspace \forall \lambda\in \C.
	$$
	 This allows to conclude because we have equality in (\ref{I2}) when $\lambda \in \Rset_+$.  %Therefore we obtain \ref{I1}.
	%Now if we consider a smaller radius $\tilde{g}(h)<g_0(h)$, then the implication (\ref{I1}) is no longer true with $\tilde{g}(h)$ in stead of $g_0(h)$ because in this case the condition $|a|>\tilde{g}(h)$ allows us to take $a=g_0(h)\in \Rset_+$ therefore $(1+\frac{1}{a})^d-1=\frac{1}{h}$ and then $|\psi(1+a)-1|= h$.
\end{proof}
\fi
%\todo{I deleted the former Lemma 5 to save space}
\ifX\begin{lemma}\label{a-epsilon}
	For  any $a\in [0,1]$ we have 
	%\begin{equation*}
	%\forall \delta \in Sp(P),\quad \delta\in \ball(1-\epsilon, a\epsilon) \Rightarrow \lambda\in \phi_{\alpha, d}^{-1}(\delta) \subset \ball(1-\epsilon^{1/d}, a^{1/d}\epsilon^{1/d})
	%\end{equation*}
	\[
	\left(\phi^*_{d}\right)^{-1}(\ball(1-\epsilon, a\epsilon))\subset \ball(1-\epsilon^{1/d}, a^{1/d}\epsilon^{1/d})\enspace.
	\]
  \end{lemma}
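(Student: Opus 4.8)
The plan is to reduce the claim to the tolerance estimate for $\psi_d$ already established in~\Cref{Tolerance0}, using the scaling identity~\eqref{PropertyPhi}. First I would introduce the change of variable $\mu = (1-\epsilon^{1/d})\lambda$, which is a bijection of $\C$; combined with~\eqref{PropertyPhi} it gives $\phi^*_d(\mu) = (1-\epsilon)\psi_d(\lambda)$. Hence $\mu \in (\phi^*_d)^{-1}(\ball(1-\epsilon, a\epsilon))$ if and only if $(1-\epsilon)\psi_d(\lambda) \in \ball(1-\epsilon, a\epsilon)$, i.e.\ if and only if $\psi_d(\lambda) \in \ball(1, h)$ with $h := a\epsilon/(1-\epsilon)$. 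Thus the whole question transfers to the preimage $\psi_d^{-1}(\ball(1,h))$, which is exactly what~\Cref{Tolerance0} controls. (The case $a=0$ is trivial, since $\phi^*_d(\lambda)=1-\epsilon$ forces $\lambda=1-\epsilon^{1/d}$.)

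Next I would apply~\Cref{Tolerance0}, yielding $\psi_d^{-1}(\ball(1,h)) \subset \ball(1, g_0(h))$ with $g_0(h) = h^{1/d}/\big((1+h)^{1/d} - h^{1/d}\big)$. Undoing the scaling, since $\mu - (1-\epsilon^{1/d}) = (1-\epsilon^{1/d})(\lambda - 1)$, I get $|\mu - (1-\epsilon^{1/d})| \leq (1-\epsilon^{1/d})\, g_0(h)$. It then only remains to verify the scalar inequality $(1-\epsilon^{1/d})\, g_0\!\big(a\epsilon/(1-\epsilon)\big) \leq a^{1/d}\epsilon^{1/d}$, which places $\mu$ in the prescribed ball $\ball(1-\epsilon^{1/d}, a^{1/d}\epsilon^{1/d})$.

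The actual work lies in this last step. Substituting $h = a\epsilon/(1-\epsilon)$ and simplifying, $g_0(h) = a^{1/d}\epsilon^{1/d}\big/\big((1-(1-a)\epsilon)^{1/d} - a^{1/d}\epsilon^{1/d}\big)$. Writing $t := \epsilon^{1/d} \in (0,1)$ and $s := a^{1/d} \in [0,1]$, and noting the denominator is positive because it reduces to $1 - t^d > 0$, the target inequality becomes, after clearing the denominator and raising to the $d$-th power, equivalent to
\[
(1 - (1-s)t)^d + (1 - s^d)\, t^d \leq 1 \enspace.
\]
I would prove this by freezing $t$ and studying $F(s)$ equal to the left-hand side on $[0,1]$. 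Its derivative is $F'(s) = dt\big[(1-(1-s)t)^{d-1} - (st)^{d-1}\big]$, and since $1 - (1-s)t - st = 1 - t \geq 0$ one has $1 - (1-s)t \geq st \geq 0$, so $F' \geq 0$. Thus $F$ is nondecreasing, whence $F(s) \leq F(1) = 1$. The main obstacle is assembling this scalar inequality in the right form and confirming the sign of the denominator; the monotonicity argument in $s$ is precisely what keeps it elementary instead of a brute-force binomial expansion.
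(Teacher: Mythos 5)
Your proposal is correct and follows essentially the same route as the paper: reduce to $\psi_d$ via the scaling identity~\eqref{PropertyPhi}, invoke~\Cref{Tolerance0}, and then verify the resulting scalar inequality, which in both cases boils down to $(1-(1-a)\epsilon)^{1/d}-(a\epsilon)^{1/d}\geq 1-\epsilon^{1/d}$. The only (minor) difference is in how this last inequality is checked: the paper uses the decreasingness of $x\mapsto (x+c)^{1/d}-x^{1/d}$ on $h\le\epsilon$, whereas you raise to the $d$-th power and run a monotonicity argument in $s=a^{1/d}$; both are valid.
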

  \fi
\ifX
\begin{proof}
	By the property~\eqref{PropertyPhi} and \Cref{Tolerance0} we deduce that for $h\geq 0$, the smallest radius $g_\epsilon(h)$ such that $\left(\phi^*_{d}\right)^{-1}(\ball(1-\epsilon,h))\subset \ball(1-\epsilon^{1/d},g_\epsilon(h))$ is given by 
	$$g_\epsilon(h)=\frac{(1-\epsilon^{1/d})h^{1/d}}{(1+h-\epsilon)^{1/d}-h^{1/d}}, \quad \forall h\geq 0.
	$$
	Note that
	$$
	(1+h-\epsilon)^{1/d}-h^{1/d} \geq 1-\epsilon^{1/d},\enspace \forall h \leq \epsilon.
	$$
	Hence
	$$
	g_\epsilon (h) \leq h^{1/d}, \enspace \forall h \leq \epsilon.
	$$
	%We want to have $g_\epsilon(h)\leq b \epsilon^{1/d}$ with $b\in [0,1)$ to preserve the acceleration. We remark first that we must have $h\leq \epsilon$, otherwise if $h>\epsilon$ then by seeing that the function $u:h\mapsto (1+h-\epsilon)^{1/d}-h^{1/d}$ is decreasing we deduce that $g_\epsilon(h)\geq h^{1/d}>\epsilon^{1/d}$. Since $h\leq \epsilon$ then $g_\epsilon(h)\leq h^{1/d}$ (because $u$ is decreasing) and therefore it suffices to have $h^{1/d}\leq b \epsilon^{1/d}$ (i.e. $h\leq b^d \epsilon$) to obtain that $g_\epsilon(h)\leq b \epsilon^{1/d}$. 
	We achieve the proof by taking 
	%$b=a^{1/d}$ and
	 $h=a\epsilon$.
\end{proof}
\fi

The following theorem describes a $d$-accelerable region.
%of the complex plane.
\begin{theorem}\label{thm:daccle}
	Let $a \in [0,1[$, if $\spec\nmyp\subset \ball\left(0,\frac{1-\epsilon}{2^d+1}\right)\cup \ball\left(1-\epsilon, a\epsilon\right)$ then with the choice of $\alpha$ specified in~\eqref{alpha_i} we have,
	\[
	\spec Q_{\alpha, d}\subset \ball(0,1-\epsilon^{1/d})\cup \ball(1-\epsilon^{1/d}, a^{1/d}\epsilon^{1/d}) \enspace ,
	\]
	so that the iterates of the $d$A-VI algorithm~\eqref{dAVI} with $\beta=0$  satisfy 
	$$
	\limsup_{k\to\infty}\|x_k-x_*\|^{1/k}
\leq 1-(1-a^{1/d}) \epsilon^{1/d}.
	$$
	%converges asymptotically
       % with a rate
        %has the accelerated convergence rate
       % $1-(1-a^{1/d}) \epsilon^{1/d}$, i.e., 
\end{theorem}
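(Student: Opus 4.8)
The plan is to reduce the claimed spectral inclusion to the two tolerance lemmas already established (\Cref{Tolerance0} and its $\epsilon$-scaled counterpart \Cref{a-epsilon}), combined with the characterization of the core accelerable region from \Cref{dSigmaAcc} and \Cref{rm:Q}. The key observation is that the hypothesis splits the spectrum of $\nmyp$ into two pieces, and each piece is handled by a separate inclusion whose image under $\phi_{\alpha,d}^{-1}$ is controlled. For the first piece, $\ball\left(0,\frac{1-\epsilon}{2^d+1}\right)$, I would invoke the estimate~\eqref{ball-Sigma}, which gives $\ball\left(0,\frac{1-\epsilon}{2^d+1}\right)\subset \Sigma_{\epsilon,d}$ after rescaling by $1-\epsilon$ (recall $\cS$ of~\eqref{a:stabled} contains $\ball(0,1/(2^d+1))$, and $\Sigma_{\epsilon,d}$ is the $(1-\epsilon)$-scaling of $\Sigma_{0,d}=\cS\setminus\{1\}$ up to the isolated point). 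By \Cref{dSigmaAcc}, spectrum inside $\Sigma_{\epsilon,d}$ maps under $\phi^*_d{}^{-1}$ into $\ball(0,1-\epsilon^{1/d})$, giving the first disk in the conclusion.

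For the second piece, $\ball(1-\epsilon,a\epsilon)$, the controlling tool is \Cref{a-epsilon}, which states precisely that $\left(\phi^*_d\right)^{-1}(\ball(1-\epsilon,a\epsilon))\subset \ball(1-\epsilon^{1/d},a^{1/d}\epsilon^{1/d})$ for $a\in[0,1]$. Thus any eigenvalue $\delta$ of $\nmyp$ lying in $\ball(1-\epsilon,a\epsilon)$ has its preimages under $\phi^*_d$ inside $\ball(1-\epsilon^{1/d},a^{1/d}\epsilon^{1/d})$. Since by \Cref{lem-d-SpecQ} we have $\spec Q_{\alpha,d}=\phi^{*-1}_d(\spec\nmyp)$, combining the two inclusions yields
\[
\spec Q_{\alpha,d}\subset \ball(0,1-\epsilon^{1/d})\cup \ball(1-\epsilon^{1/d},a^{1/d}\epsilon^{1/d}),
\]
which is exactly the middle assertion.

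For the final asymptotic-rate claim, the plan is to bound the spectral radius of $Q_{\alpha,d}$ by taking the worst case over the union of the two disks. The first disk is contained in $\ball(0,1-\epsilon^{1/d})$, so it contributes a modulus at most $1-\epsilon^{1/d}$. The second disk $\ball(1-\epsilon^{1/d},a^{1/d}\epsilon^{1/d})$ has points of maximal modulus bounded by $(1-\epsilon^{1/d})+a^{1/d}\epsilon^{1/d}=1-(1-a^{1/d})\epsilon^{1/d}$. Since $a<1$ forces $1-a^{1/d}>0$, the larger of the two bounds is $1-(1-a^{1/d})\epsilon^{1/d}<1$, so $\rho(Q_{\alpha,d})\leq 1-(1-a^{1/d})\epsilon^{1/d}$. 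The standard fact that the asymptotic geometric rate of a fixed linear iteration equals its spectral radius (as noted after~\eqref{AVIQ}) then yields the stated $\limsup$ bound on $\|x_k-x_*\|^{1/k}$.

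The main obstacle I anticipate is bookkeeping the scaling between $\Sigma_{0,d}$ and $\Sigma_{\epsilon,d}$ correctly, and verifying cleanly that $\ball\left(0,\frac{1-\epsilon}{2^d+1}\right)$ lands in the $\epsilon$-region rather than merely the $\epsilon=0$ region; this requires checking that the inner-disk estimate~\eqref{ball-Sigma} passes through the relation $\phi^*_d((1-\epsilon^{1/d})\lambda)=(1-\epsilon)\psi_d(\lambda)$ of~\eqref{PropertyPhi} without losing the factor $1-\epsilon$. Everything else is a direct assembly of the cited lemmas, so the content is essentially the correct combination of \Cref{a-epsilon}, \Cref{dSigmaAcc}, and the estimate~\eqref{ball-Sigma}, together with the elementary maximization of modulus over the two disks.
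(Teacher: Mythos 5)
Your proposal is correct and follows essentially the same route as the paper, whose proof is literally the one-line instruction to combine \Cref{dSigmaAcc}, the estimate~\eqref{ball-Sigma}, and \Cref{a-epsilon}; you have simply filled in the details (the $(1-\epsilon)$-rescaling of $\cS$ via~\eqref{PropertyPhi}, the pointwise application of \Cref{lem-d-SpecQ} to each of the two pieces of the spectrum, and the elementary modulus bound over the union of the two disks), all of which check out.
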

\ifX{\begin{proof}
	By combining \Cref{dSigmaAcc}, \Cref{ball-Sigma} and \Cref{a-epsilon}.
\end{proof}}\fi

\subsection{Application to Markov Decision Processes: Accelerated Policy Iteration}
%\text{ look at \url{https://arxiv.org/pdf/1405.2878.pdf} and reerences theirein}
%\todo[inline]{OS:Describe the algorithm $d$A-PI, analyse it's convergence, and add references about "approximate" Policy Iteration. We can run for each policy the affine algorithm until the residual is $\leq \epsilon$. Then, we have the Theorem that the algorithm terminates with an approximation of $\epsilon/(1-\gamma)^2$ (see Munos and others)}
%Fix $\epsilon$. Run 0-player AVI until the residual is $\leq \epsilon$.
%Do policy improvement up to $\epsilon$. 

As an application, we consider the standard discounted Markov decision process (MDP) with
state space $[n]:=\{1,\dots,n\}$, see~\cite{whittle,Bertsekas96} for background. For each state $i$, denote by $\cA(i)$ the set of actions,  $P^a_{i,j}$ the transition probability from state $i$ to state $j$ under action $a\in \cA(i)$,  and $g_i^a$ the %average
 reward of choosing action $a\in \cA(i)$ in state $i$. Let $1>\gamma_i>0$, for $i\in[n]$, be state-dependent discount factors. The associated dynamic programming operator $T:\R^n\to\R^n$ is given by:
%% $$ 
%% T(x)=\begin{pmatrix} T_1(x)\\ \vdots \\ T_{n}(x)\end{pmatrix},\enspace$$
%% where
\begin{align}\label{e-def-mdp}
T_i(x):=\max_{a\in\cA(i)} \gamma_i \sum_{j\in [n]}P^a_{i,j}x_j+ g_i^a,\enspace \forall i\in [n]\,.
\end{align}
We set $\gamma:= \max_{i\in[n]} \gamma_i$. 

\textcolor{blue}{
The value of the discounted problem for this MDP starting from an initial state $i$ is given by:
\[
v_i:=\max_{a_0, a_1, \cdots} \mathbb{E} \big [ g_{X_0}^{a_0}+ \gamma_{X_0} g_{X_1}^{a_1}+ \gamma_{X_0} \gamma_{X_1} g_{X_2}^{a_2} +\cdots \mid X_0 =i \big ] \enspace,
\]
where the maximum is taken over admissible sequences of random actions, and $X_0, X_1, \cdots$ denotes the random sequence of states generated by the actions.}

We are interested in finding the value vector $v\in \R^n$ of this MDP which is a solution of the fixed point
problem $v=T(v)$.
%We are interested in finding a solution $v\in \R^n$ of the fixed point problem $v=T(v)$. The $i$th entry $v_i$ of $v$ represents the value of the discounted problem for this MDP, with initial state $i$.
The fixed point exists and is unique since $T$ is a contraction
of constant $\gamma$ in the sup-norm.

A classical approach to solve this problem is to use value
iteration, i.e., to compute the sequence $v^k=T(v^{k-1})$,
which converges to the unique fixed point. It is tempting
to apply directly accelerated value iteration to the non-linear
problem $v=T(v)$. This approach was proposed in~\cite{julien},
and it is experimentally effective on some instances.
However, the convergence proof of accelerated value
iteration uses inherently the affine character
of the operator $T$, and it is not clear whether
general enough convergence conditions can be
given for Markov decision processes. An alternative
approach, which we develop here, is to rely
on {\em policy iteration} instead of value
iteration, which will allow us to
apply the idea of $d$th acceleration to solve MDP, but in an indirect
manner, leading to convergence guarantees.

A {\em policy} is a map $\sigma:[n] \to \cup_{i\in [n]}A(i)$
such that $\sigma(i)\in A(i)$, it represents a state dependent
decision rule. It determines a $0$-player
game, with an affine operator $T^\sigma: \R^n \to \R^n$,
\begin{align}\label{eq:Tsigma}
T^\sigma _i(x):= \gamma_i \sum_{j\in [n]}P^{\sigma(i)}_{i,j}x_j+ g_i^{\sigma(i)},\enspace \forall i\in [n]\,.
\end{align}
For a vector $x\in \R^n$, we define the quantity $\Top(x) =\max_{i\in[n]}x_i$.
We have $\|x\|_\infty=\max(\Top(x),\Top(-x))$. For $x,y\in \R^n$, we write $x\leq y$ to mean that $x_i\leq y_i$ for all $i\in [n]$. We denote by $x_*$ the unique fixed point of the operator $T$ and by $x^{\sigma}$ the unique fixed point of the operator $T^{\sigma}$. We denote by $a^+=\max(a,0)$ the positive part
of a real number. The following lemma presents some classical properties of the operators $T$ and $T^\sigma$ that are useful for our analysis.

\begin{lemma}
	Let $x,y\in \R^n$, $\sigma$ a policy, $e=(1,\cdots,1)\in \R^n$ the unit vector and $a\in \R^{+}$ a nonnegative real number, we have:
	\begin{eqnarray}
	(\Top(T(x)-T(y)))^+\leq \gamma (\Top(x-y))^+ ,\label{eq-T-1}\\
	\|x-x^{\sigma}\|_\infty \leq \frac{1}{1-\gamma} \|x-T^\sigma(x)\|_\infty , \label{eq-T-2}\\
	\|x-x_*\|_\infty \leq \frac{1}{1-\gamma} \|x-T(x)\|_\infty , \label{eq-T-3}\\
	T^\sigma(x+a e)\leq T^\sigma(x)+\gamma a e ,\label{eq-T-4}\\
	x\leq T^\sigma(x)+ a e \Rightarrow x\leq x^\sigma + \frac{a}{1-\gamma} e , \label{eq-T-5}\\
	x\leq T(x)+ a e \Rightarrow x\leq x_* + \frac{a}{1-\gamma} e ,\label{eq-T-6}\\
	x\leq y \Rightarrow T^\sigma (x)\leq T^\sigma(y) . \label{eq-T-7}
	\end{eqnarray}
\end{lemma}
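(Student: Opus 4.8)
The plan is to establish the seven properties in order of logical dependence, bootstrapping the harder estimates from two elementary structural facts. I would begin with monotonicity~\eqref{eq-T-7} and the sub-translation inequality~\eqref{eq-T-4}, both immediate from the definition~\eqref{eq:Tsigma}. Since each $P^{\sigma(i)}_{i,j}\geq 0$ and $\gamma_i>0$, the affine map $x\mapsto \gamma_i\sum_j P^{\sigma(i)}_{i,j}x_j+g_i^{\sigma(i)}$ is nondecreasing in every coordinate, giving~\eqref{eq-T-7}. For~\eqref{eq-T-4} I would expand $T^\sigma_i(x+ae)=T^\sigma_i(x)+\gamma_i a\sum_j P^{\sigma(i)}_{i,j}$ and bound the extra term by $\gamma a$, using $a\geq 0$, $\gamma_i\leq\gamma$ and $\sum_j P^{\sigma(i)}_{i,j}\leq 1$. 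The same computation with $z-ae$ yields the reversed form $T^\sigma(z-ae)\geq T^\sigma(z)-\gamma a e$, which I will need later.

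Next I would derive~\eqref{eq-T-1}. The key observation is that $T$ is the coordinatewise maximum of the operators $T^\sigma$, so both monotonicity and the sub-translation bound lift verbatim to $T$: for $a\geq 0$ one has $T(x+ae)\leq T(x)+\gamma a e$, and $x\leq y\Rightarrow T(x)\leq T(y)$. Setting $c:=(\Top(x-y))^+$ gives $x\leq y+ce$, hence $T(x)\leq T(y+ce)\leq T(y)+\gamma c e$, i.e.\ $T_i(x)-T_i(y)\leq\gamma c$ for every $i$; taking the maximum over $i$ and then the positive part (using $\gamma c\geq 0$) yields~\eqref{eq-T-1}. Applied to a single policy this shows $T^\sigma$ is a $\gamma$-contraction in $\|\cdot\|_\infty$, so that $(T^\sigma)^k(x)\to x^\sigma$ and $T^k(x)\to x_*$, a convergence I invoke below.

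Then I would prove the one-sided estimates~\eqref{eq-T-5} and~\eqref{eq-T-6} by iteration. From $x\leq T^\sigma(x)+ae$, applying $T^\sigma$ and combining~\eqref{eq-T-7} with~\eqref{eq-T-4} gives $T^\sigma(x)\leq (T^\sigma)^2(x)+\gamma a e$; substituting back yields $x\leq (T^\sigma)^2(x)+a(1+\gamma)e$, and by induction $x\leq (T^\sigma)^k(x)+a(1+\gamma+\cdots+\gamma^{k-1})e$. Letting $k\to\infty$ and using $(T^\sigma)^k(x)\to x^\sigma$ together with $\sum_{j\geq 0}\gamma^j=1/(1-\gamma)$ gives~\eqref{eq-T-5}; the identical argument with $T$ in place of $T^\sigma$ gives~\eqref{eq-T-6}.

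Finally I would obtain the two-sided bounds~\eqref{eq-T-2} and~\eqref{eq-T-3}. Writing $a:=\|x-T^\sigma(x)\|_\infty$, the upper inequality $x\leq T^\sigma(x)+ae$ and~\eqref{eq-T-5} give $\Top(x-x^\sigma)\leq a/(1-\gamma)$. For the opposite direction I would run the dual iteration: starting from the lower inequality $T^\sigma(x)\leq x+ae$ and using the reversed translation established above together with monotonicity, the same telescoping yields $x\geq x^\sigma-\frac{a}{1-\gamma}e$, whence $\Top(x^\sigma-x)\leq a/(1-\gamma)$. Since $\|w\|_\infty=\max(\Top(w),\Top(-w))$, combining the two controls gives~\eqref{eq-T-2}, and the same reasoning for $T$ (via~\eqref{eq-T-6} and its dual) gives~\eqref{eq-T-3}. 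The only genuinely delicate point is the bookkeeping in this last step, namely tracking both the translation inequality and its reversed form so that $\Top(x-x^\sigma)$ and $\Top(x^\sigma-x)$ are simultaneously bounded; everything else is a routine consequence of the nonnegativity of $P$, of $\gamma<1$, and of the contraction convergence of the iterates.
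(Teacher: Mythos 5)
Your proof is correct, and for properties \eqref{eq-T-4}, \eqref{eq-T-5}, \eqref{eq-T-6}, \eqref{eq-T-7} it matches the paper's argument (nonnegativity of $P$, row sums, and iterating $T^\sigma$ with monotonicity plus the translation bound). Where you diverge is in \eqref{eq-T-1} and in \eqref{eq-T-2}--\eqref{eq-T-3}. For \eqref{eq-T-1} the paper argues pointwise with the maximizing action: $T_i(x)-T_i(y)\leq \gamma_i\sum_j P^{a_x}_{i,j}(x_j-y_j)$ where $a_x$ attains the max in $T_i(x)$, then takes $\Top$ and the positive part; you instead lift monotonicity and the sub-translation bound from the $T^\sigma$ to $T$ and apply them to $x\leq y+ce$ with $c=(\Top(x-y))^+$. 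Both are standard and equally short. For \eqref{eq-T-2} and \eqref{eq-T-3} the paper simply writes the triangle inequality $\|x-x^\sigma\|_\infty\leq\|x-T^\sigma(x)\|_\infty+\|T^\sigma(x)-T^\sigma(x^\sigma)\|_\infty\leq\|x-T^\sigma(x)\|_\infty+\gamma\|x-x^\sigma\|_\infty$ and rearranges, using only that $T^\sigma$ is a sup-norm $\gamma$-contraction with fixed point $x^\sigma$; you instead run the order-theoretic iteration twice, once upward via \eqref{eq-T-5} and once downward via a reversed translation inequality, and recombine with $\|w\|_\infty=\max(\Top(w),\Top(-w))$. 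Your route is valid and has the merit of staying entirely within the one-sided calculus that the rest of the lemma uses, but it is considerably longer than the one-line metric argument; the paper's version also avoids having to state and track the dual inequality $T^\sigma(z-ae)\geq T^\sigma(z)-\gamma a e$, which you correctly identify as the delicate bookkeeping step in your approach.
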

Property~\eqref{eq-T-7} follows from $P_{ij}^a\geq 0$,
whereas~\eqref{eq-T-4}  follows from $\sum_j P_{ij}^a=1$.
Property~\eqref{eq-T-1} means that $T$ is a contraction of
rate $\gamma$ in the nonsymmetric norm $(x,y) \mapsto (\Top(x-y))^+$. To see it we compute for $i\in [n]$, $T_i(x)-T_i(y)=\max_{a}\{\gamma_i \sum_{j\in [n]}P^a_{i,j}x_j+ g_i^a\} - \max_{a}\{\gamma_i \sum_{j\in [n]}P^a_{i,j}y_j+ g_i^a\} \leq (\gamma_i \sum_{j\in [n]}P^{a_x}_{i,j}x_j+ g_i^{a_x}) - (\gamma_i \sum_{j\in [n]}P^{a_x}_{i,j}y_j+ g_i^{a_x})= \gamma_i \sum_{j\in [n]}P^{a_x}_{i,j}(x_j-y_j)$, where $a_x$ is the action that maximizes the expression of $T_i(x)$. %Now we can easily deduce the property~\eqref{eq-T-1}.
%this is shown in~\eqref{}. 
%\todo{Add details or references. The proof of XXX is elementary, it follows along the lines of Book, prop...}
Property~\eqref{eq-T-2} (and similarly~\eqref{eq-T-3}) comes street forward from $x^{\sigma}$ being a fixed point of $T^{\sigma}$ and the later being a $\gamma$-contraction in the sup norm.

To obtain property~\eqref{eq-T-5} (and similarly~\eqref{eq-T-6}), we apply $k$ times the operator $T^\sigma$ to both sides of the initial inequality and we use the properties~\eqref{eq-T-7} and~\eqref{eq-T-4} to obtain that $x\leq (T^\sigma)^{k+1}(x)+ a \sum_{i=0}^{k} \gamma^i e$ and finely since $T^\sigma$ is a strict contraction, we know that when $k$ goes to infinity, $(T^\sigma)^{k+1}(x)$ converges to the fixed point $x^\sigma$.

Policy iteration computes
a succession of policies $\sigma^1,\sigma^2,\dots$.
At each stage, it solves a $0$-player fixed point problem,
finding a vector $v^k$ such that
$v^k = T^{\sigma^k}(v^k)$. Then, the vector $v^k$
is used to determine the new policy, by considering
the maximizing actions in the expression of $T(v^k)$.
When policy iteration is implemented in exact arithmetics, for a fixed $\gamma<1$, the number of iterations is
strongly polynomial~\cite{ye2010simplex}.
Moreover, on ordinary instances, the number of iterations is often
of a few units. Hence, the bottleneck, preventing to apply policy iteration
to large scale Markov decision problems, is generally
the solution of the affine problem
$v^k = T^{\sigma^k}(v^k)$: algebraic methods, based
on LU-factorization, are not adapted to large scale sparse instances,
whereas standard iterative methods can be slow, since the contraction
rate $\gamma$ is typically close to $1$.
To address this difficulty, we present a version of policy iteration in which
at each stage, $v^k$ is computed by the $d$th accelerated scheme.

%% We next show how accelerated value iteration can be used
%% to solve non-linear fixed point problems, by
%% use \subsection{Markov Decision Processes with random matrices}
%% We provide numerical results showing
%% that the accelerated value iteration algorithms works
%% %% that e are studying in this work, gives very satisfying results
%% not only for affine operators, but also for classes
%% of (nonlinear) Bellman operators.
%% So we will show that we can accelerate the convergence of the classical value iteration method and find the value of a discounted Markov Decision Process (MDP).

%% In this section, we describe an approximate Policy Iteration algorithm based on applying the $d$A-VI to the affine  $0$-player problem obtained when we fix a policy.

We consider the {\em Accelerated Policy Iteration of degree $d$ ($d$A-PI)}
presented in~\Cref{d-API}.
%\iffalse
\begin{algorithm}[htbp]
	\begin{algorithmic}[1]
	\State Fix a target accuracy $\delta$ for value determination and $\delta'$ for policy improvement.
    \State Initialization: select a starting policy $\sigma^{0}$,
	and set the initial values $x_{-1,0}=x_{-1,1}=\cdots=x_{-1,d-2}=y_{-1,d-2}=0$%}

	\For{$k=0,1,\cdots$} the following:

	\State (Accelerated value determination): Run the $d$A-VI \cref{dAVI} on the operator $T^{\sigma^k}$ until having a residual smaller that $\delta$: so first we initialize $x_{k,0},x_{k,1},\cdots,x_{k,d-2}$ by the last $d-1$ values of the sequence $(x_{k-1,l})_l$ and $y_{k,d-2}$ by the last value of the sequence $(y_{k-1,l})_l$, and for $l=d-2,\cdots$, we do the iterations of \cref{dAVI}:
	%\todo{OS: Define the MDP problem here instead of subsection 5.1, the operator $T$ (with $\gamma_i$ that depend on the state $i$?), mention its $\gamma$-contraction property, the $0$-player operator $T^{\sigma}$ and %the value of the corresponding $0$-player game 
	%its fix point $x_{\sigma^k}$.}
	\begin{subequations}
		%\label{dAVI}
		\begin{align}
		x_{k,l+1}&=(1-\beta)y_{k,l}+ \beta T(y_{k,l}) \enspace ,%\label{dKM}
		\\
		y_{k,l+1}&=(1+\alpha_{d-2}+\cdots+\alpha_0)x_{k,l+1} -\alpha_{d-2} x_{k,l} -\cdots -\alpha_0 x_{k,l-d+2} \enspace , %\label{dAVIy}
		\end{align}
	\end{subequations}
	until $\|y_{k,l}-T^{\sigma^k}(y_{k,l})\|_{\infty}\leq \delta$. We denote the final $y_{k,l}$ by $y_k$.
	
    \State (Policy improvement). We determine a policy $\sigma^{k+1}$ such that $\|T(y_k)-T^{\sigma^{k+1}}(y_k)\|_{\infty}\leq \delta'$,
    %by taking, for each $i\in [n]$, $\sigma^{k+1}(i)$ to be a value achieving the maximum when evaluating~\eqref{e-def-mdp} at $x=y_k$. 
    and for each $i \in [n]$, we choose $\sigma^{k+1}(i)=\sigma^k(i)$ whenever possible. 
    %If $\sigma^{k+1}=\sigma^{k}$, the algorithm stops.
    \EndFor
    \end{algorithmic}
    \caption{Accelerated Policy Iteration of degree $d$ ($d$A-PI).}\label{d-API}
\end{algorithm}
%\fi
Using classical estimates on approximate value iteration, see~\cite{Bertsekas96,bertsekas11,scherrer}, we get the
following convergence result.
\begin{proposition}\label{prop-dAPI}
  Suppose that %(spectrum condition for all policies) 
  for any policy $\sigma$, $\spec P^{\sigma} \subset \Sigma_{\epsilon,d}\cup \{1-\epsilon\} $,
  and that we choose $\alpha=(\alpha_0,\cdots,\alpha_{d-2})$ as in \cref{alpha_i}. 
  %The $d$-th accelerated policy iteration algorithm~\eqref{} terminates with a vector $v^k$ such that $\|v^k-v\|\geq \epsilon/?$
  Each iteration $k$ of the $d$A-PI algorithm terminates, 
  %and the sequence $(y_{k,l})_l$ converges with an asymptotic rate of $1-\epsilon^{1/d}$,
  and we have :
  \begin{equation}\label{eq-limsup}
  \limsup_{k\to\infty}\|y_k-x_*\|_\infty \leq \frac{(1+\gamma)\delta+\delta'}{(1-\gamma)^2} \enspace .
  \end{equation}
  Moreover, if $\sigma^{k+1}=\sigma^{k}$ for some $k$, then $\|y_k-x_*\|_\infty \leq \frac{\delta+\delta'}{1-\gamma}$.
  \end{proposition}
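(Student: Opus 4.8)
The plan is to split the bound on $\|y_k-x_*\|_\infty$ into its two one-sided directions, working throughout with the nonsymmetric ``top'' seminorm. \emph{Termination} is the easy ingredient: at stage $k$ the inner loop runs $d$A-VI on the affine operator $T^{\sigma^k}$, whose linear part has spectrum in $\Sigma_{\epsilon,d}\cup\{1-\epsilon\}$ by hypothesis, so by \Cref{dSigmaAcc} the companion matrix $\myqd$ has spectral radius at most $1-\epsilon^{1/d}<1$; the inner iterates therefore converge linearly to the fixed point $x^{\sigma^k}$ of $T^{\sigma^k}$, the residual $\|y_{k,l}-T^{\sigma^k}(y_{k,l})\|_\infty$ tends to $0$, and the loop stops once it falls below $\delta$. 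For the \emph{upper direction}, the stopping rule gives $y_k\le T^{\sigma^k}(y_k)+\delta e\le T(y_k)+\delta e$ (as $T^{\sigma^k}\le T$), so \eqref{eq-T-6} yields $y_k\le x_*+\tfrac{\delta}{1-\gamma}e$, i.e.\ $\Top(y_k-x_*)^+\le \tfrac{\delta}{1-\gamma}$.

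The substance lies in the \emph{lower direction}. Setting $c_k:=\Top(x_*-y_k)^+$, the target is the geometric recursion
\[
c_{k+1}\le \gamma c_k+\frac{(1+\gamma)\delta+\delta'}{1-\gamma},
\]
whose limiting value is exactly $\tfrac{(1+\gamma)\delta+\delta'}{(1-\gamma)^2}$. The crux is a one-step estimate mimicking the exact policy-iteration inequality $x^{\sigma^{k+1}}\ge T(x^{\sigma^k})$, namely
\[
x^{\sigma^{k+1}}\ \ge\ T(y_k)-\frac{\gamma\delta+\delta'}{1-\gamma}\,e.
\]
To establish it I would first note $T(y_k)\ge T^{\sigma^k}(y_k)\ge y_k-\delta e$, apply $T^{\sigma^{k+1}}$ (monotone by \eqref{eq-T-7}, with the affine shift behind \eqref{eq-T-4}) to get $T^{\sigma^{k+1}}(T(y_k))\ge T^{\sigma^{k+1}}(y_k)-\gamma\delta e$, and combine with the policy-improvement bound $T^{\sigma^{k+1}}(y_k)\ge T(y_k)-\delta' e$ to obtain $T(y_k)\le T^{\sigma^{k+1}}(T(y_k))+(\gamma\delta+\delta')e$. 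Plugging this into \eqref{eq-T-5} delivers the displayed lower bound on $x^{\sigma^{k+1}}$.

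With this estimate the recursion assembles mechanically. From $x^{\sigma^{k+1}}\ge T(y_k)-\tfrac{\gamma\delta+\delta'}{1-\gamma}e$ and $T(x_*)-T(y_k)\le \gamma c_k e$ (which is \eqref{eq-T-1}, using $T(x_*)=x_*$) one gets $\Top(x_*-x^{\sigma^{k+1}})^+\le \gamma c_k+\tfrac{\gamma\delta+\delta'}{1-\gamma}$; then \eqref{eq-T-2} gives $\|y_{k+1}-x^{\sigma^{k+1}}\|_\infty\le\tfrac{\delta}{1-\gamma}$, whence $c_{k+1}\le \Top(x_*-x^{\sigma^{k+1}})^+ + \tfrac{\delta}{1-\gamma}$, which is precisely the recursion above. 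Taking $\limsup$ and combining with the upper direction (note $\tfrac{\delta}{1-\gamma}\le\tfrac{(1+\gamma)\delta+\delta'}{(1-\gamma)^2}$) yields \eqref{eq-limsup}.

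I expect the main obstacle to be obtaining the \emph{clean} contraction factor $\gamma$ rather than $\gamma/(1-\gamma)$. The naive route---bounding $\|x_*-x^{\sigma^{k+1}}\|_\infty\le \|T(x_*)-T^{\sigma^{k+1}}(x_*)\|_\infty/(1-\gamma)$ and transferring the greedy error from $y_k$ to $x_*$---introduces a factor $\gamma/(1-\gamma)$ and diverges once $\gamma\ge\tfrac12$; the remedy is to keep the estimate anchored at $y_k$ through the approximate subsolution $T(y_k)-\tfrac{\gamma\delta+\delta'}{1-\gamma}e$, exactly as above. Finally, for the ``moreover'' claim, when $\sigma^{k+1}=\sigma^k$ the inequalities $T(y_k)\le T^{\sigma^k}(y_k)+\delta' e\le y_k+(\delta+\delta')e$ and $T(y_k)\ge x_*-\gamma c_k e$ combine to give $(1-\gamma)c_k\le\delta+\delta'$, hence $c_k\le\tfrac{\delta+\delta'}{1-\gamma}$, which together with the upper direction gives $\|y_k-x_*\|_\infty\le\tfrac{\delta+\delta'}{1-\gamma}$.
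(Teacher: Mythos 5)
Your proposal is correct and follows essentially the same route as the paper: the same split into the two one-sided $\Top$-bounds, the same use of properties \eqref{eq-T-1}--\eqref{eq-T-7}, and the same recursion $c_{k+1}\le\gamma c_k+\tfrac{(1+\gamma)\delta+\delta'}{1-\gamma}$ with identical constants. The only cosmetic difference is that you apply \eqref{eq-T-5} to $T(y_k)$ to get $T(y_k)\le x^{\sigma^{k+1}}+\tfrac{\gamma\delta+\delta'}{1-\gamma}e$ directly, whereas the paper applies it to $y_k$ and then pushes the resulting inequality $y_k\le y_{k+1}+\tfrac{2\delta+\delta'}{1-\gamma}e$ through $T^{\sigma^{k+1}}$ — both yield the same intermediate bound $T(y_k)\le y_{k+1}+\tfrac{(1+\gamma)\delta+\delta'}{1-\gamma}e$, and your derivation of the ``moreover'' claim, though more roundabout than the paper's one-line appeal to \eqref{eq-T-3}, is also valid.
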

\begin{proof}
	The termination of each iteration $k$ %and its asymptotic rate of $1-\epsilon^{1/d}$
	comes from \cref{dSigmaAcc}.
	For each $k$, we have from the algorithm $\|y_{k}-T^{\sigma^k}(y_{k})\|_{\infty}\leq \delta$, then $y_k\leq T^{\sigma^k}(y_{k}) +\delta e \leq T(y_k) + \delta e$. Then by~\eqref{eq-T-6} we deduce that $y_k\leq x_*+\frac{\delta}{1-\gamma} e$. Therefore $\Top(y_k-x_*)\leq \frac{\delta}{1-\gamma}\leq \frac{(1+\gamma)\delta+\delta'}{(1-\gamma)^2}$.
	
	We have $y_k \leq T(y_k) + \delta e \leq T^{\sigma^{k+1}}(y_{k}) +(\delta+\delta') e$, then by~\eqref{eq-T-5} we get $y_k\leq x^{\sigma^{k+1}}+\frac{\delta+\delta'}{1-\gamma} e$. By~\eqref{eq-T-2} and the algorithm, we have $\|y_{k+1}-x^{\sigma^{k+1}}\|_\infty \leq \|y_{k+1}-T^{\sigma^{k+1}}(y_{k+1})\|_\infty/(1-\gamma)
	\leq \delta/(1-\gamma)$, then $x^{\sigma^{k+1}}\leq y_{k+1} + \frac{\delta}{1-\gamma} e$. We deduce that $y_k\leq y_{k+1} + \frac{2\delta+\delta'}{1-\gamma} e$. We apply $T^{\sigma^{k+1}}$ to both sides of this inequality and use~\eqref{eq-T-7} and~\eqref{eq-T-4} to get that $T(y_k)\leq
	T^{\sigma^{k+1}}(y_k) +\delta' e\leq T^{\sigma^{k+1}} (y_{k+1} + \frac{2\delta+\delta'}{1-\gamma} e) +\delta' e
	\leq T^{\sigma^{k+1}}(y_{k+1})+ \frac{(2\delta+\delta')\gamma}{1-\gamma} e +\delta' e
	\leq y_{k+1} +\delta e +\frac{(2\delta+\delta')\gamma}{1-\gamma} e +\delta' e
	= y_{k+1} +\frac{(1+\gamma)\delta+\delta'}{1-\gamma} e$. 
	Therefore, $x_*-y_{k+1}\leq x_*-T(y_k) +\frac{(1+\gamma)\delta+\delta'}{1-\gamma} e = T(x_*)-T(y_k) +\frac{(1+\gamma)\delta+\delta'}{1-\gamma} e$. 
	Then $(\Top(x_*-y_{k+1}))^{+}\leq (\Top(T(x_*)-T(y_k)))^{+} +\frac{(1+\gamma)\delta+\delta'}{1-\gamma}$, and by using~\eqref{eq-T-1} we deduce that $(\Top(x_*-y_{k+1}))^{+}\leq \gamma (\Top(x_*-y_k))^{+} +\frac{(1+\gamma)\delta+\delta'}{1-\gamma}$. By iterating this inequality, we deduce that for iteration $k$, $(\Top(x_*-y_{k}))^{+}\leq \gamma^k (\Top(x_*-y_0))^{+} +\frac{(1+\gamma)\delta+\delta'}{1-\gamma}\sum_{i=0}^{k-1} \gamma^i$. Therefore, $\limsup_{k\to\infty} (\Top(x_*-y_{k}))^{+} \leq \frac{(1+\gamma)\delta+\delta'}{(1-\gamma)^2}$, and by using $\|x_*-y_{k}\|_\infty=\max(\Top(y_{k}-x_*),(\Top(x_*-y_{k}))^{+})$ we end the proof of~\eqref{eq-limsup}.
	
	%$\|y_{k}-T^{\sigma^k}(y_{k})\|_{\infty}\leq \delta$, then $\|y_{k}-x_{\sigma^k}\|_{\infty}\leq \delta/(1-\gamma)$ %%where $x_{\sigma^k}$ is the fix point of the affine operator $T^{\sigma^k}$ (i.e. the value of the $0$-player game associated to the policy $\sigma^k$)
	%because $\|y_{k}-x_{\sigma^k}\|_{\infty}\leq \|y_{k}-T^{\sigma^k}(y_{k})\|_{\infty}+\|T^{\sigma^k}(y_{k})-T^{\sigma^k}(x_{\sigma^k})\|_{\infty}\leq \delta + \gamma \|y_{k}-x_{\sigma^k}\|_{\infty}$.
	%In addition to that, the policy improvement is done exactly $T(y_k)=T^{\sigma^{k+1}}(y_k)$, then by applying the proposition 6.2 of \cite{Bertsekas96}, we deduce that $\limsup_{k\to\infty}\|x_{\sigma^k}-x_*\|_\infty \leq \frac{2\gamma\delta}{(1-\gamma)^3}$, and we know that $\|y_{k}-x_{\sigma^k}\|_{\infty}\leq \delta/(1-\gamma)$, whiches leads to $\limsup_{k\to\infty}\|y_k-x_*\|_\infty \leq \frac{(1+\gamma^2)\delta}{(1-\gamma)^3}$.
	
	Now, if $\sigma^{k+1}=\sigma^{k}$ for some $k$, then $\|T(y_k)-T^{\sigma^{k}}(y_k)\|_\infty\leq \delta'$ and we know that $\|y_{k}-T^{\sigma^k}(y_{k})\|_{\infty}\leq \delta$, then $\|y_{k}-T(y_{k})\|_{\infty}\leq \delta +\delta'$. Therefore by~\eqref{eq-T-3}, we get $\|y_{k}-x_*\|_{\infty}\leq \frac{\delta+\delta'}{1-\gamma}$.
\end{proof}

\textcolor{blue}{
  \begin{remark}\rm\label{rem-scherrer}
    \Cref{prop-dAPI} should be compared with~\cite[Prop.~6.2]{Bertsekas96}
    and Remark~5 and Eqn~22 of~\cite{scherrer},
    which bound the same limsup by an expression of the form $(\delta'+2\gamma \epsilon)/(1-\gamma)^2$, where $\epsilon$ is a upper bound of $\|y_k-x^{\sigma^k}\|_\infty$. Here, $\epsilon$ is replaced by $\delta$, which is an upper
    bound of the residual $\|y_k -T^{\sigma^k}(y_k)\|_\infty$.
  \end{remark}
  \begin{remark}\rm\label{rem-complexity-pi}
    \Cref{prop-dAPI} is only an asymptotic result. In contrast,
    when policy iteration is implemented exactly, the value vector
    $v^k$ associated to the $k$th policy that is selected
    satisfies $\|v_k -x_*\|_\infty
    \leq \gamma^k \|v_0-x_*\|_\infty$, see~Lemma~6.5 of~\cite{Hansen2013}.
    \end{remark}
\begin{remark}\rm\label{rem-num-stability}
  Since accelerated value iteration, and so, accelerated
  policy iteration, are implemented with a fixed precision
  arithmetics, one may wonder whether acceleration leads
  to numerical unstabilities. In the numerical experiments
  which follows, no such unstabilities
  were observed for the relevant values $d=2,4$ considered
  here. We verified the validity of the approximate solutions
  that we obtained using
  the inequality
  %\[ \|y^k - x^*\|_\infty\leq \frac{\|y^k  - T(y^k)\|_\infty}{1-\gamma}
%  \]
~\eqref{eq-T-3}. Indeed,
  the residual $\|y_k  - T(y_k)\|_\infty$, where $y_k$
  is the approximate solution gotten at the final iteration of the algorithm,
  can be evaluated in an accurate way (with a precision close to the machine precision) using only the last value $y_k$. So, if this residual is small, by the inequality~\eqref{eq-T-3}, we can certify that $\|y_k - x^*\|_\infty$ is also small,
  so that we have a valid approximate solution.
  In all the experiments of~\Cref{sec-numerical}, the algorithms are stopped with a residual of $<10^{-10}$,
  and $1-\gamma$ is $\geq 10^{-4}$, so, it is guaranteed that the true
  solution is approximated with a precision $<10^{-6}$.
\end{remark}}

\section{Numerical results}\label{sec-numerical}
In this section, we show the numerical performance of the proposed $d$A-VI and $d$A-PI  with $d=2$ and $d=4$.
The acceleration parameters in all the examples follow~\cref{alpha_i}; the parameter $\alpha=\frac{1-\sqrt{\epsilon}}{1+\sqrt{\epsilon}}$ for accelerations of degree $2$, and the parameters
%$$\alpha=\frac{1-\sqrt{1-\gamma}}{1+\sqrt{1+\gamma}}, \enspace \alpha_0=\frac{(1-(1-\gamma)^{1/4})^4}{\gamma},$$ 
%$$\enspace \alpha_1= \frac{-4(1-(1-\gamma)^{1/4})^3}{\gamma}, \enspace \alpha_2=\frac{6(1-(1-\gamma)^{1/4})^2}{\gamma}.$$
$ \alpha_0=\frac{(1-\epsilon^{1/4})^4}{1-\epsilon},
 \alpha_1= \frac{-4(1-\epsilon^{1/4})^3}{1-\epsilon}$ and $\alpha_2=\frac{6(1-\epsilon^{1/4})^2}{1-\epsilon}$
for  accelerations of degree $4$.

In all the examples below, we do the policy improvement at each iteration $k$ of the $d$A-PI algorithm in an exact way by taking, for each $i\in [n]$, $\sigma^{k+1}(i)\in [m]$ to be a value achieving the maximum when evaluating~\eqref{e-def-mdp} at $x=y_k$, i.e. $\delta'=0$, and we let the accuracy of the value determination to be $\delta=10^{-10}$.

%% We illustrate the accelerated value iteration by examples of Markov decision processes.

%% provide numerical results showing
%% that the accelerated value iteration algorithms works
%% %% that e are studying in this work, gives very satisfying results
%% not only for affine operators, but also for classes
%% of (nonlinear) Bellman operators.

\subsection{Markov decision processes with random matrices}\label{subsec-random}
%% So we will show that we can accelerate the convergence of the classical value iteration method and find the value of a discounted Markov Decision Process (MDP).
We consider the discounted MDP  model of~\eqref{e-def-mdp}.
%% problem with
%% state space $[n]:=\{1,\dots,n\}$. For each state $i$, denote by $\cA(i)$ the set of actions,  $P^a_{i,j}$ the transition probability from state $i$ to state $j$ under action $a\in \cA(i)$,  and $g_i^a$ the average reward of choosing action $a\in \cA(i)$ in state $i$. Let $\gamma_i>0$, for $i\in[n]$ be state-dependent discount factors. The associated dynamic programming operator $T:\R^n\to\R^n$ is given by:
%% %% $$ 
%% %% T(x)=\begin{pmatrix} T_1(x)\\ \vdots \\ T_{n}(x)\end{pmatrix},\enspace$$
%% %% where
%% $$
%% T_i(x):=\max_{a\in\cA(i)} \gamma_i \sum_{j\in [n]}P^a_{i,j}x_j+ g_i^a,\enspace \forall i\in [n].
%% $$
We take a damping parameter $\beta=1$ in what follows.
%% \if{The value iteration is defined as:
%% %^\begin{align*}
%% $x^{k+1}=T(x^k)$.
%% %\end{align*}
%% The accelerated value iteration is defined as:\todo{do we take $\beta=1$?}
%% \begin{align*}
%% & x^{k+1}=T(y^k)\\
%% & y^{k+1}=(1+\alpha)x^{k+1}-\alpha x^k
%% \end{align*}
%% %We denote by doubly accelerated value iteration,
%% The $4$-AVI iteration can be obtained by applying twice
%% a single acceleration, it is defined by:
%% %accelerated scheme, because it can be seen as a simple acceleration applied twice, and it's defined as:
%% \begin{align*}
%% & x^{k+1}=T(y^k)\\
%% & y^{k+1}=(1+\alpha_2+\alpha_1+\alpha_0)x^{k+1}-\alpha_2 x^k-\alpha_1 x^{k-1}-\alpha_0 x^{k-2}
%% \end{align*}
%% }\fi
%% \if
%% The latter can be written equivalently as
%% \begin{align}
%% \begin{pmatrix}
%% y^{k+1}\\
%% y^k
%% \end{pmatrix}=\tilde T \left(\begin{pmatrix}
%% y^{k}\\
%% y^{k-1}
%% \end{pmatrix}\right)
%% \end{align}
%% where $\tilde T$ is defined as
%% $$
%% \tilde T \left(\begin{pmatrix}
%% y\\
%% y'
%% \end{pmatrix}\right)=\begin{pmatrix}
%% (1+\alpha_1)T(y)-\alpha_1 T(y')\\
%% y
%% \end{pmatrix}
%% $$
%% We then define the doubly accelerated scheme as follows.
%% \begin{align}
%% & u^{k+1}=\tilde T(v^k)\\
%% & v^{k+1}=(1+\alpha_2)u^{k+1}-\alpha_2 u^k
%% \end{align}
%% \fi

%\subsection{Random problem and test}
The instances used in~\Cref{myfigure2,myfigure3,myfigure4,myfigure41,myfigure42,myfigure5} are generated in the following way.
We fix two integers $n$ and $m$. For each $i\in [n]$, we take $\cA(i)=[m]$ and randomly generate a probability vector $p_i^a=(P^a_{i,1},\dots, P^a_{i,n})$  as follows:
%\begin{align}\label{a:df}
$P^a_{i,j}=\frac{X^a_{i,j}}{X^a_{i,1}+\dots+X^a_{i,n}}$,
%\end{align}
where the $X^a_{i,j}$ are independent Bernoulli random variables of mean $p\in (0,1)$.  The discount factors $\gamma_i$ are randomly chosen in the interval $[1-2\epsilon,1-\epsilon]$, independently for each $i\in [n]$. %, with $\epsilon=10^{-4}$

Let $\lambda_1,\dots \lambda_n$ be the eigenvalues of $\sqrt{n}P$.
It is shown in \cite{Bordenave08} that the counting probability measure 
%$$
$\frac{\delta_{\lambda_1}+\cdots+\delta_{\lambda_n}}{n}$,
%$$
converges weakly as $n\rightarrow \infty$ to the uniform law on the disk
%$$
$\{z\in \C:|z|\leq \sqrt{(1-p)/p}\}$. Moreover, Theorem 1.2, {\em ibid.}
shows that the second modulus of an eigenvalue of $P$ is
of order $1/\sqrt{n}$.
This explains the shape of the spectrum shown on the figures \Cref{myfigure2,myfigure3,myfigure4}, %\ref{myfigure3} and \ref{myfigure4},
%\todo[color=blue!30]{SG: more precise discussion of Bordenave et al}
and explains also, along with~\eqref{ball-Sigma}, why the accelerated schemes of order $4$ work in the large scale example of~\Cref{myfigure5} where we take $p=0.0025$ with $n=10^5$.

%being 1 and probability $1-p$ being 0.
%The variables $\{X^a_{i,j}: i\in [n], j\in [n], a\in [m]\}$ are independent.
%with each other.

\ifX{
In~\Cref{myfigure2,}, we consider an
instance where the matrices are randomly generated as above with $n=30$, $m=10$ and $p=0.2$.
%%we display the performance of value iteration (VI) and accelerated value iteration ($2$A-VI) applied on an
In subfigure~\ref{fig:subfig22}, we display the spectrum of one matrix $P^\sigma_\gamma:=(\gamma_i P^\sigma_{ij})_{ij}$.
%with $P$ being a Markov matrix of size $30\times 30$ randomly generated as above.
One can notice that this spectrum presents eigenvalues that are outside the simply and multiply accelerable regions delimited respectively by $\Gamma_\epsilon$ and $\Gamma_{\epsilon,4}$ (see~\Cref{dSigmaAcc}). Therefore, the accelerated policy iteration algorithms ($d$A-PI) cannot be applied for this instance.
%because even its first iteration is not guaranteed to terminate
In accordance with that, the subfigure~\ref{fig:subfig21} shows that the accelerated value iteration algorithms $2$A-VI and $4$A-VI do not converge.
%We notice also that the $4$A-VI algorithm diverges very quickly and this is also du to the location of some eigenvalues outside the $4$-accelerable region delimited by $\Gamma_{\epsilon,4}$.

In~\Cref{myfigure3}, we consider an instance with $n=100$, $m=10$ and $p=0.2$. The subfigure~\ref{fig:subfig32} shows that the spectrum of the random matrices in this case is located in the simply accelerable region delimited by $\Gamma_{\epsilon}$, but it is not included in the $4$-accelerable region $\Gamma_{\epsilon,4}$. Therefore, we can apply the $2$A-PI algorithm but not the $4$A-PI in this case. The subfigure~\ref{fig:subfig31} shows that the simply accelerated schemes $2$A-PI and $2$A-VI has significantly better performances than value iteration algorithm. It shows also as expected that the acceleration of order $4$ does not converge. 

In~\Cref{myfigure4}, we consider an instance with $n=1500$, $m=10$ and $p=0.2$. The subfigure~\ref{fig:subfig42} shows that the spectrum of the random matrices in this case is located inside the accelerable regions of order $2$ and $4$ delimited respectively by $\Gamma_{\epsilon}$ and $\Gamma_{\epsilon,4}$. Therefore, we can apply both $2$A-PI and $4$A-PI in this case. The subfigure~\ref{fig:subfig41} shows that all the accelerated schemes converge in this case and that the multi-accelerated schemes have better performances than the simply accelerated ones.

%the simply accelerated schemes $2$A-PI and $2$A-VI has significantly better performances than value iteration algorithm. It shows also as expected that the acceleration of order $4$ does not converge. 
%and \Cref{myfigure4} display the analogue plots as \Cref{myfigure2} with different  $n$.
}\fi
%In Figure~\ref{myfigure4}, we show the performance of value iteration, accelerated value iteration ($2$-AVI), and $4$-AVI,
%doubly accelerated value iteration,
%\todo{I deleted the term ``doubly accelerated'' to save space, correct the figure}
%In \Cref{myfigure5}, we present a large scale problem where the number of states is $n=10^5$

In~\Cref{myfigure41}, we consider an instance with $n=4000$, $m=10$, $p=0.1$. In this example we take $\epsilon=10^{-2}$ to allow the Value Iteration algorithm to have a visible improvement. 

In~\Cref{myfigure42}, we consider an instance with $n=4\times 10^4$, $m=10$ and the matrices used are sparse with a parameter $p=0.005$.  

\textcolor{blue}{%\linelabel{line-LU}
We observe that the classical Policy Iteration (PI) algorithm~\cite{howard1960dynamic,puterman2014markov}, using LU decomposition to solve the linear $0$-player problem at each iteration, is way more faster than our iterative algorithms ($d$A-PI and $d$A-VI) in the case of small matrices like in~\Cref{myfigure2,myfigure3}, but as the size of the matrices gets bigger our iterative algorithms become more competitive like in~\Cref{myfigure4,myfigure41}, and even way faster than Policy Iteration like in~\Cref{myfigure42}.
%at the point that in the example in~\Cref{myfigure5}, Policy iteration cannot be used because of memory saturation.
}

The~\Cref{myfigure5} represents a large scale analogue to the previous examples where the number of states is $n=10^5$,
and the matrices used are sparse with a parameter $p=0.0025$. 
For this example, the classical Policy Iteration algorithm
%~\cite{howard1960dynamic,puterman2014markov}, 
%\todo{OS: Add reference for classical Policy Iteration}
%using LU decomposition to solve the linear $0$-player problem at each iteration, 
%cannot be used because it needs to use full matrices which is not possible for such large scales. 
cannot be used because of memory saturation.
However, the $d$A-PI algorithms~\ref{d-API} that we propose, with $d=2$ and $d=4$ here, work in this case and show significantly better performances than the classical Value Iteration algorithm. 
The $d$A-VI algorithms also show competitive performances in comparison with $d$A-PI algorithms. However, we expect in general that $d$A-PI becomes more competitive than $d$A-VI when the number of actions $m$ is large, because the number of policies visited grow slowly
with the number of actions (in the discounted case,
a worst case almost linear bound for this number
is given in~\cite{scherrer2013improved}, based on~\cite{ye2010simplex},
the convergence being generally faster on typical instances).%\linelabel{line-typical}
%% , th enu)
%% because as observed in practice %shown in
%~\cite{littman2013complexity},
%% \todo{OS: Add ref showing that the number of policies visited remain small even for larger $m$. SG: refer to scherrer}
%% one can notice that in the worst-case the number of policies visited by the classical Policy Iteration algorithm is linear in the total number of actions.
%remains small even for larger $m$.
%In contrast, the number of computations needed by $d$A-VI increase linearly with $m$.

In particular, for all the examples in~\Cref{myfigure3,myfigure4,myfigure41,myfigure42,myfigure5}, we notice that both $2$A-PI and $4$A-PI stop only after $k\leq 5$ iterations over policies because each one of them finds a policy $\sigma^{k+1}$ equal to $\sigma^{k}$. The same phenomenon occurs in the second application shown in the next section (see~\Cref{myfigure6,myfigure7} below).

%for randomly generated instances with $n=300$, $m=10$ and $p=0.5$.

%We note that here $\epsilon=1-\gamma$. 
%We test the four schemes on random generated instances with 

%%$$

\ifX{\begin{figure}[htbp]%[h]
	\centering
	\subfigure[PI, VI, $2$A-VI and $4$A-VI]{
		\includegraphics[scale =0.28] {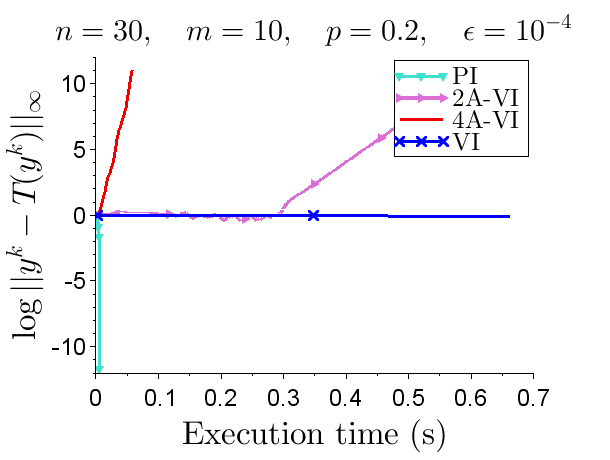}
		\label{fig:subfig21}
	}
	\subfigure%[The spectrum of a one matrix $\gamma P$ where  $\epsilon=10^{-4}$ and $P$ is a Markov matrix of size $30\times 30$]
	[The spectrum of one matrix $P^\sigma_\gamma$ where  $\epsilon=10^{-4}$.]{
		\includegraphics[scale =0.28] {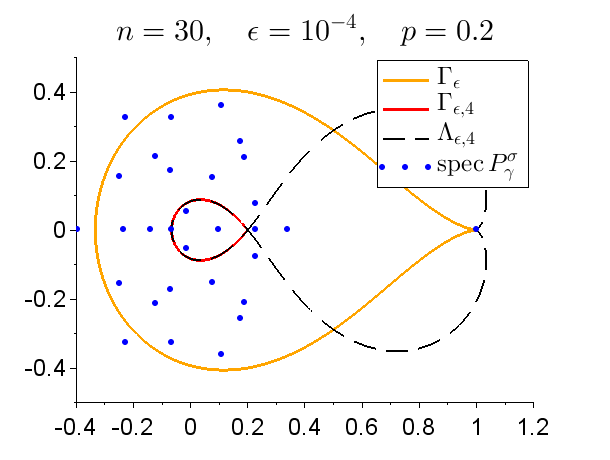}
		\label{fig:subfig22}
	}
	\caption{Markov Decision Process with random Markov matrices of size $n=30$ and with $m=10$ actions at each state.}
	\label{myfigure2}
\end{figure}

\begin{figure}[htbp]%[h]
	\centering
	\subfigure[PI, $2$A-PI, VI, $2$A-VI and $4$A-VI]{
		\includegraphics[scale =0.28] {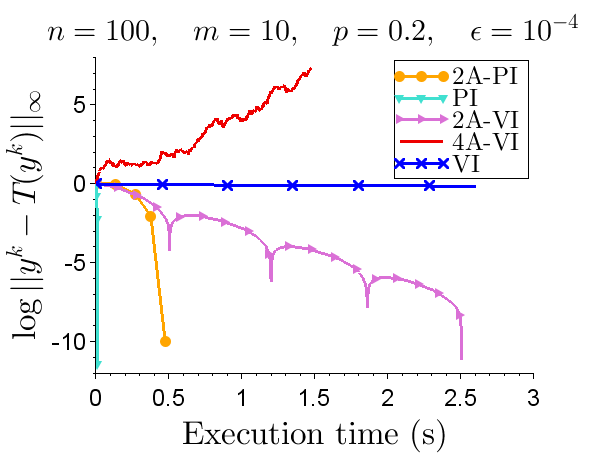}
		\label{fig:subfig31}
	}
	\subfigure%[The spectrum of a random matrix $\gamma P$ where  $\epsilon=10^{-4}$ and $P$ is a Markov matrix of size $100\times 100$]
	[The spectrum of one matrix $P^\sigma_\gamma$ where  $\epsilon=10^{-4}$.]{
		\includegraphics[scale =0.28] {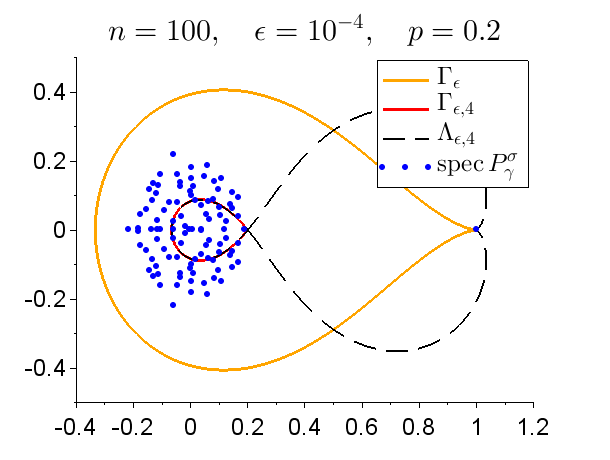}
		\label{fig:subfig32}
	}
	\caption{Markov Decision Process with random Markov matrices of size $n=100$ and with $m=10$ actions at each state.}
	\label{myfigure3}
\end{figure}
}\fi
\begin{figure}[htbp]%[h]
	\centering
	\subfigure[PI, $2$A-PI, $4$A-PI, VI, $2$A-VI and $4$A-VI]{%\includegraphics[scale =0.16] {test2.png}
		\includegraphics[scale =0.28] {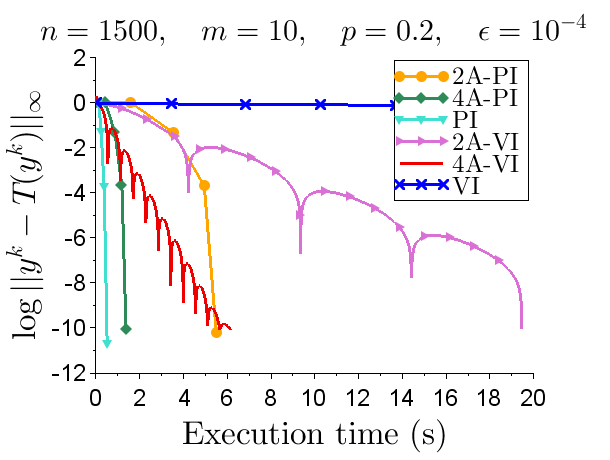}
		\label{fig:subfig41}
	}
	\subfigure%[The spectrum of a random matrix $\gamma P$  where $\epsilon=10^{-4}$ and $P$ is a Markov matrix of size $1500\times 1500$]
	[The spectrum of one matrix $P^\sigma_\gamma$ where  $\epsilon=10^{-4}$.]{
		\includegraphics[scale =0.28] {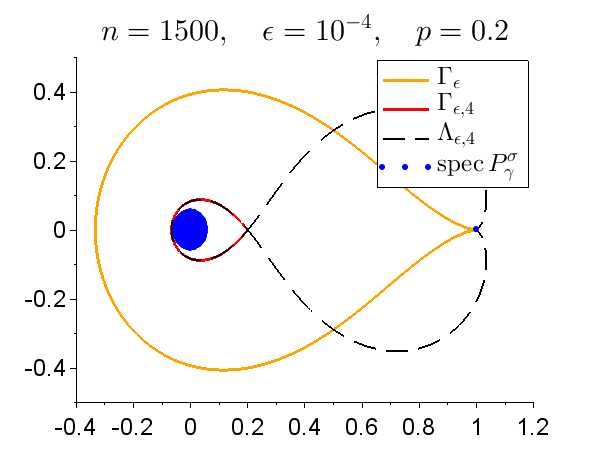}
		\label{fig:subfig42}
	}
	\caption{Markov Decision Process with random Markov matrices of size $n=1500$ and with $m=10$ actions at each state.}
	\label{myfigure4}
\end{figure}

\begin{figure}[htbp]%[h]
	\centering
	\subfigure[PI, $2$A-PI, $4$A-PI, VI, $2$A-VI and $4$A-VI]{
		\includegraphics[scale =0.28] {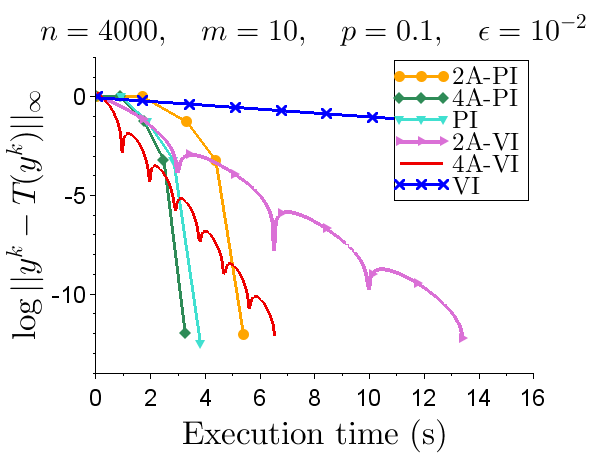}
		\label{fig:subfig411old}
	}
	\subfigure%[The spectrum of a random matrix $\gamma P$  where $\epsilon=10^{-4}$ and $P$ is a Markov matrix of size $1500\times 1500$]
	[The spectrum of one matrix $P^\sigma_\gamma$ where  $\epsilon=10^{-2}$.]{
		\includegraphics[scale =0.28] {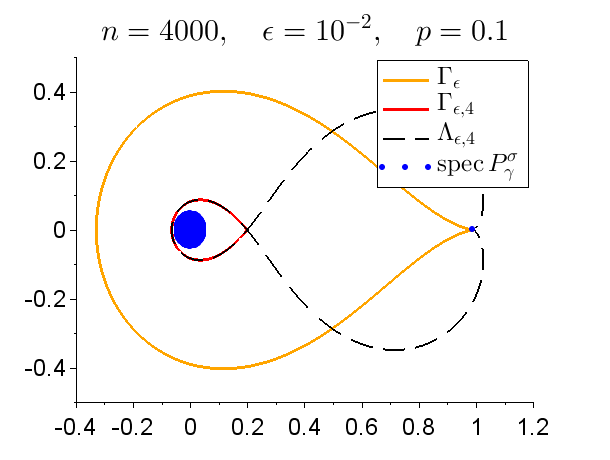}
%		\label{fig:subfig412}
	}
	\caption{Markov Decision Process with random Markov matrices of size $n=4000$ and with $m=10$ actions at each state.}
	\label{myfigure41}
\end{figure}

\begin{figure}[htbp]%[h]
	\centering%\includegraphics[scale =0.44] {Sprandne5.png}
	\subfigure[PI, $2$A-PI, $4$A-PI, VI, $2$A-VI and $4$A-VI]{
		\includegraphics[scale =0.28] {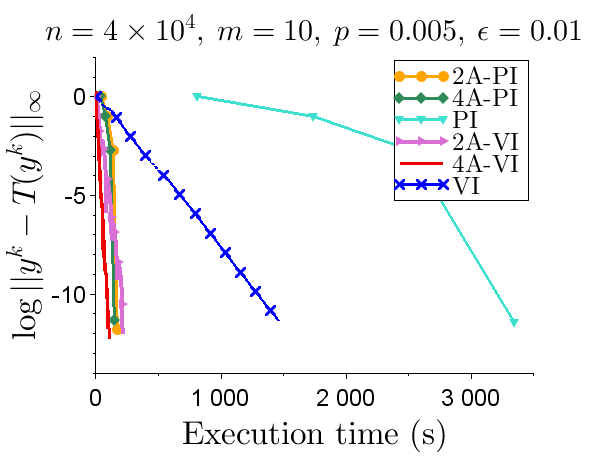}
		\label{fig:subfig411}
	}
	\subfigure[Zoom on~\Cref{fig:subfig411}.]{
		\includegraphics[scale =0.28] {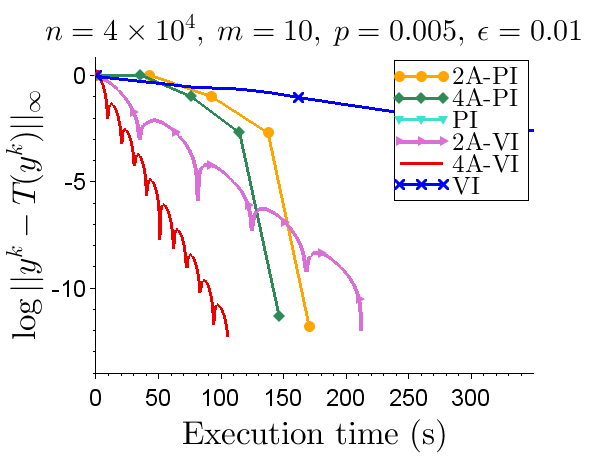}
		\label{fig:subfig412}
	}	
	\caption{Markov Decision Process with random Markov matrices of size $n=4\times 10^4$ and with $m=10$ actions at each state.}
	\label{myfigure42}
\end{figure}

\begin{figure}[htbp]%[h]
	\centering
	\includegraphics[scale =0.28] {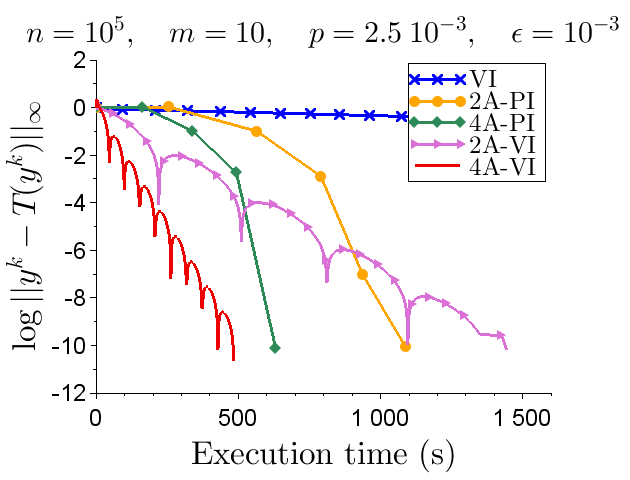} %{RandomSparse4AVI.png}
	%\label{fig:subfig}
	
	\caption{Markov Decision Process with random Markov matrices of size $n=10^5$ and with $m=10$ actions at each state.% for which classical Policy Iteration does not work.
}
	\label{myfigure5}
\end{figure}

\subsection{Hamilton-Jacobi-Bellman PDE}\label{subsec-HJB}
%{Solving Partial Differential Equations} 
%% And also HJB equation in general
We now apply the accelerated schemes to solve a Hamilton-Jacobi-Bellman (HJB) equation arising from a controlled diffusion problem with a small drift. 

\subsubsection{Description of the problem}

We consider an HJB equation in dimension $p\geq 1$, where $v$ is a real-valued function defined on the torus
$\R^p/\Z^p$, identified to $[0,1]^p$, assuming a cyclic
boundary condition:
\begin{equation}\label{eq-pde}
\max_{a\in[m]}\bigg(\frac{1}{2}\sum_{i=1}^{p} \sigma_{i}^2 
\frac{\partial^2v}{\partial x_i^2} (x)
+\sum_{i=1}^{p} g_i(a,x) 
\frac{\partial v}{\partial x_i} (x)
-\lambda v(x) +r(a,x)\bigg)=0  , \enspace x \in [0,1]^p \enspace ,
\end{equation}
where $[m]=\{1,\dots,m\}$ is the set of actions, the scalar $\sigma_i>0$ represents the volatility in direction $i$, $g_i:[m]\times [0,1]^p \mapsto \R$ represents the drift in direction $i$ that depends on the action $a$ and the state $x$, $\lambda>0$ is a dissipation parameter and $r:[m]\times [0,1]^p \mapsto \R$ is the function of rewards.

\textcolor{blue}{
The function $v$ is given by
\[
v(x) = \sup_{a(\cdot)} \mathbb{E} \big[ \int_0^{\infty} \exp(-\lambda t) r(a(t), X_t) dt \mid X_0 = x \big] \enspace,
\]
with $dX_t=g(a(t), X_t) dt +\diag(\sigma) dW_t$, where $W_t$ is the standard Brownian motion on $\R^p$, $\diag(\sigma)$ is the diagonal matrix with entries $(\sigma_i)_{i\in [p]}$ and the supremum is taken over progressively measurable processes $a(t)$ with respect to the filtration of the Brownian motion $W_t$, see~\cite{fleming2006} for background. 
}

For $x=(x_1,\cdots,x_p)$ and $i\in [p]$, we denote by $x_{\neq i}$ the $p-1$ entries of $x$ that are different from $i$. For a scalar $g\in \R$, we denote $g^{+}=\max (g,0)$ and $g^{-}=\max (-g,0)$.

We use a uniform grid $\Omega=\{h,2h,\dots,Nh\}^p$ to discretize the space $[0,1]^p$, where $N$ is a positive integer and $h=1/N$. 
%The discretization of the derivatives leads to the following equation:
\textcolor{blue}{An upwind finite difference discretization of the HJB equation~\eqref{eq-pde} leads to
}

\begin{multline}\label{HJBdiscret}
\max_{a\in[m]}\bigg(\frac{1}{2}\sum_{i=1}^{p} \sigma_{i}^2 \frac{v(x_{\neq i},x_i+h)+v(x_{\neq i},x_i-h)-2 v(x)}{h^2} \\
+\sum_{i=1}^{p} g_i(a,x)^{+} \frac{v(x_{\neq i},x_i+h)-v(x)}{h} \\
+\sum_{i=1}^{p} g_i(a,x)^{-} \frac{v(x_{\neq i},x_i-h)-v(x)}{h} -\lambda v(x) +r(a,x)\bigg) = 0, \quad x\in \Omega \enspace .
\end{multline}
\textcolor{blue}{This equation reduces to a finite dimensional dynamic programming equation of the form $V=T(V)$, with $T$ as in~\eqref{e-def-mdp}, see~\cite{kushner2001} for background. We next recall this transformation, in order to apply our method. 
}

We consider a discrete vector $V=(V_k)_{k\in [N]^p} \in \R^{N^p}$ such that for each index $k=(k_1,\cdots,k_p)\in [N]^p$, the $k$th entry of $V$ is $V_k=v(hk)$.%v(hk_1,hk_2,\cdots,hk_p)

The equation \cref{HJBdiscret} can be rewritten in the following matrix form:
\begin{equation}\label{eq-Atau}
\max_{\tau \in [m]^{N^p}}(A_h^{\tau}V+r^{\tau})=0
\end{equation}
such that for a given policy $\tau: [N]^p \to [m]$, the matrix $A_h^{\tau}\in \R^{N^p\times N^p}$ has the $k$th row $(A_h)_{k\cdot}^{\tau(k)}$, $k\in [N]^p$, that represents the equation \cref{HJBdiscret} for $x=hk\in \Omega$ and % the action 
$a=\tau(k)\in [m]$, and where the vector $r^{\tau}$ has the $k$th entry $r^{\tau(k)}_k= r(\tau(k),hk)$ 
%which represents $r(\tau(k),hk)$ of equation \cref{HJBdiscret}
.

We can easily see from~\cref{HJBdiscret} that the diagonal entries of each matrix $A_h^{\tau}$ are negative, while all the other entries are nonnegative, and this is due to the distinction of the positive and negative parts of the functions $g_i$ that we did% in \cref{HJBdiscret}
. We transform the problem \cref{eq-Atau} by introducing for each policy $\tau$ the matrix $P_h^{\tau}=\I+ch^2 A_h^{\tau}$, where $c$ is a positive scalar that allows all the matrices $P_h^{\tau}$ to have nonnegative entries. The following lemma shows how such a scalar can be chosen.

\begin{lemma}
	If $c\leq c_0:=1/(\sum_{i=1}^{p} \sigma_{i}^2+h \max_{a\in[m], k\in [N]^p}\sum_{i=1}^{p} |g_i(a,hk)|+h^2\lambda)$, then for each policy $\tau$, all the entries of the matrix $P_h^{\tau}$ are nonnegative. 
	
	Moreover, we have $P_h^{\tau} e = (1-ch^2\lambda) e$, where $e=(1,\cdots,1)\in \R^{N^p}$, and then $\spec P_h^{\tau} \subset \ball(0,1-\epsilon)$ with $\epsilon=ch^2\lambda$.
\end{lemma}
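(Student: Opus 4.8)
The plan is to read off explicitly the entries of the row $(A_h)^{\tau(k)}_{k\cdot}$ directly from the discretized equation~\eqref{HJBdiscret}, and then to exploit the elementary identity $g^+ + g^- = |g|$ to simplify both the nonnegativity condition and the row-sum computation.

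First I would identify, for a fixed state $x = hk$ and action $a = \tau(k)$, the coefficient of each grid value occurring in~\eqref{HJBdiscret}. Grouping the contributions of the second-order difference, the two upwind first-order differences, and the dissipation term $-\lambda v(x)$, the coefficient of the central value $v(x)$ (i.e.\ the diagonal entry of $A_h^\tau$) is
\[
(A_h^\tau)_{kk} = -\frac{1}{h^2}\sum_{i=1}^p \sigma_i^2 - \frac{1}{h}\sum_{i=1}^p \big(g_i(a,x)^+ + g_i(a,x)^-\big) - \lambda = -\frac{1}{h^2}\sum_{i=1}^p \sigma_i^2 - \frac{1}{h}\sum_{i=1}^p |g_i(a,x)| - \lambda ,
\]
while the coefficients of the two neighbours $v(x_{\neq i}, x_i \pm h)$ in each direction $i$ are $\tfrac{\sigma_i^2}{2h^2} + \tfrac{g_i(a,x)^{\pm}}{h} \geq 0$. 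This shows the off-diagonal entries of $A_h^\tau$ are nonnegative. Passing to $P_h^\tau = \I + ch^2 A_h^\tau$, the off-diagonal entries are multiplied by $ch^2 \geq 0$ and hence stay nonnegative, whereas the diagonal entry becomes
\[
(P_h^\tau)_{kk} = 1 - c\Big(\sum_{i=1}^p \sigma_i^2 + h\sum_{i=1}^p |g_i(a,x)| + h^2\lambda\Big).
\]
This is nonnegative precisely when $c$ is at most the reciprocal of the bracket; taking the supremum of the bracket over all states $x = hk$ and actions $a$ produces exactly the threshold $c_0$, so $c \leq c_0$ forces all entries of $P_h^\tau$ to be nonnegative.

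For the second assertion I would compute the row sums of $A_h^\tau$. Summing over $i$ the two off-diagonal coefficients and again using $g_i^+ + g_i^- = |g_i|$ yields a total off-diagonal row sum of $\sum_i \big(\tfrac{\sigma_i^2}{h^2} + \tfrac{|g_i|}{h}\big)$, which cancels exactly against the first two terms of the diagonal entry. Here the \emph{cyclic boundary condition} is essential: it guarantees that every grid point possesses its full set of $2p$ neighbours on the torus, so the cancellation holds uniformly for all rows without exceptional boundary rows. What remains is $(A_h^\tau e)_k = -\lambda$, i.e.\ $A_h^\tau e = -\lambda e$, whence $P_h^\tau e = (\I + ch^2 A_h^\tau) e = (1 - ch^2\lambda)e$.

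Finally, $P_h^\tau$ is a nonnegative matrix all of whose row sums equal $1 - ch^2\lambda = 1 - \epsilon$. Since the spectral radius of a nonnegative matrix is bounded above by its maximal row sum (equivalently, $(1-\epsilon)^{-1}P_h^\tau$ is a stochastic matrix, of spectral radius $1$), I conclude $\rho(P_h^\tau) \leq 1 - \epsilon$, that is $\spec P_h^\tau \subset \ball(0, 1-\epsilon)$. The only genuinely delicate point is the careful bookkeeping in the first step, combined with the observation that periodicity makes the row-sum identity hold for every row; everything else is routine.
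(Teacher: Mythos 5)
Your proof is correct and follows essentially the same route as the paper's: read off the diagonal entry of $A_h^\tau$ from~\eqref{HJBdiscret}, use $g^++g^-=|g|$ to get the nonnegativity threshold $c_0$, and obtain $P_h^\tau e=(1-ch^2\lambda)e$ from the row-sum cancellation (the paper phrases this as substituting $v=e$ into~\eqref{HJBdiscret}, which is the same computation), concluding via the spectral radius bound for nonnegative matrices. Your explicit remark that the cyclic boundary condition ensures every row has its full set of neighbours is a nice touch the paper leaves implicit.
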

\begin{proof}
By construction of $P_h^{\tau}$, all its non-diagonal entries are nonnegative. 
	 
For $k\in [N]^p$, we can see from equation \cref{HJBdiscret} that 
\[
(A_h^{\tau})_{kk}=-\sum_{i=1}^{p} \sigma_{i}^2/h^2-\sum_{i=1}^{p} (g_i(\tau(k),hk)^{+}+g_i(\tau(k),hk)^{-})/h-\lambda .
\]
Therefore $(P_h^{\tau})_{kk}=1-ch^2\lambda-c\sum_{i=1}^{p} \sigma_{i}^2 -ch \sum_{i=1}^{p} |g_i(\tau(k),hk)|$ . Then if $c\leq c_0$, all the diagonal entries of $P_h^{\tau}$ are also nonnegative.

The property $P_h^{\tau} e = (1-ch^2\lambda) e$ can be easily seen when we take $v$ equal to the constant vector $e$ in the equation \cref{HJBdiscret}, and since all the entries of $P_h^{\tau}$ are nonnegative, we deduce that its spectral radius is $1-ch^2\lambda$ which ends the proof of the lemma.
\end{proof}

\begin{remark}\rm
We notice that the parameter $c$ used in the definition of $P_h^{\tau}$ plays the role of a Krasnosel'ski\u\i-Mann damping (see~\eqref{a:yk}).
So if we divide $c$ by $2$, i.e. we take $c\leq c_0/2$, this ensures that all the eigenvalues of the matrix $P_h^{\tau}$ has a real part in the interval $[0,1-\epsilon]$.
\end{remark}

Now, we can write the equation \cref{HJBdiscret}, as a fixed point problem that represents a $1$-player game:
\begin{equation}\label{eq-fixpoint}
T(V)=V
\end{equation}
where
$$
T(V)=\max_{\tau\in[m]^n}(P_h^{\tau}V+r_h^{\tau}) \enspace .
$$
with $r_h^{\tau}=ch^2 r^{\tau}$.
\subsubsection{Study of the eigenvalues for uncontrolled PDE with uniform drifts}

We will restrict the study of the eigenvalues of the matrices defining the problem \cref{HJBdiscret}, to the uncontrolled case where $m=1$. We have only one matrix $A_{h}$, and $P_h=\I+ch^2A_h$. We suppose also that the drift coefficients $g_i\in \R$ does not depend on the state $x$. Under this framework we have the following lemma that gives an explicit expression of the eigenvalues of $P_h$.

\begin{lemma}\label{lem-eigenval-pde1}

The $N^p$ eigenvalues of the matrix $P_h$ are given as follows for each $k\in[N]^p$:
\begin{equation*}
\eta(k)= 1 - c\sum_{j=1}^{p} \sigma_{j}^2(1-\cos(2\pi k_j h))-c\lambda h^2
+2i c h\sum_{j=1}^{p} \sin(\pi k_j h) (g_j^{+}\e^{i \pi k_j h}-g_j^{-}\e^{-i \pi k_j h}) \enspace .
\end{equation*}
\end{lemma}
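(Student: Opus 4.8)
The plan is to exploit the translation invariance of the discretized operator on the torus. In the uncontrolled case with state-independent coefficients, the stencils in~\eqref{HJBdiscret} are constant across the grid, and the grid $\Omega$ carries cyclic boundary conditions; hence the matrix $A_h$ — and therefore $P_h = \I + ch^2 A_h$ — commutes with the cyclic shift operators in each of the $p$ coordinate directions. Equivalently, $P_h$ is a $p$-level circulant matrix, so its eigenvectors are the discrete Fourier modes. For $k = (k_1,\dots,k_p) \in [N]^p$, I would set
$$(\psi_k)_m = \exp\Big(2\pi i \sum_{j=1}^p k_j m_j h\Big), \quad m \in [N]^p,$$
which is the sampling at the grid points $x = hm$ of the exponential $x \mapsto \exp(2\pi i \sum_j k_j x_j)$. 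Since the $k_j$ are integers, $\psi_k$ respects the identification $x_j = 1 \sim x_j = 0$, so the forward and backward shifts in~\eqref{HJBdiscret} act on $\psi_k$ exactly as cyclic shifts, with no boundary correction.

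The core of the argument is then a direct coordinatewise eigenvalue computation. Applying the centered second difference in direction $j$ to $\psi_k$ multiplies it by $e^{2\pi i k_j h} + e^{-2\pi i k_j h} - 2 = -2\big(1-\cos(2\pi k_j h)\big)$, while the forward and backward first differences multiply it by $e^{2\pi i k_j h} - 1$ and $e^{-2\pi i k_j h} - 1$ respectively. Summing these with the coefficients $\tfrac12 \sigma_j^2/h^2$, $g_j^{+}/h$, $g_j^{-}/h$ and subtracting $\lambda$ exhibits $\psi_k$ as an eigenvector of $A_h$, with some eigenvalue $\mu(k)$; correspondingly $\eta(k) = 1 + ch^2\mu(k)$ is the eigenvalue of $P_h$. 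To recover the stated form I would use the elementary identities $e^{2\pi i k_j h} - 1 = 2i\sin(\pi k_j h)\,e^{i\pi k_j h}$ and $e^{-2\pi i k_j h} - 1 = -2i\sin(\pi k_j h)\,e^{-i\pi k_j h}$, which combine the two drift contributions into $2i\sin(\pi k_j h)\big(g_j^{+} e^{i\pi k_j h} - g_j^{-} e^{-i\pi k_j h}\big)$; multiplying the whole eigenvalue of $A_h$ by $ch^2$ then yields precisely the claimed expression, with the second-difference terms producing $-c\sigma_j^2(1-\cos(2\pi k_j h))$ and the dissipation term producing $-c\lambda h^2$.

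Finally, I would note that the modes $\{\psi_k : k \in [N]^p\}$ are mutually orthogonal — they are the columns of a $p$-fold tensor product of DFT matrices — and hence form a basis of $\C^{N^p}$. Having exhibited $N^p$ linearly independent eigenvectors, I conclude that the numbers $\eta(k)$, $k \in [N]^p$, constitute the complete spectrum of $P_h$, which proves the lemma. I do not expect a genuine obstacle: the only delicate point is the justification that the difference operators act as clean cyclic shifts on $\psi_k$, which relies jointly on the integrality of the frequencies $k_j$ and on the cyclic boundary condition. It is this structural observation, rather than the ensuing algebra, that carries the argument.
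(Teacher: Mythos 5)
Your proof is correct and follows essentially the same route as the paper's: both exhibit the discrete Fourier modes $l \mapsto e^{2i\pi h \sum_j k_j l_j}$ as eigenvectors of $A_h$, compute the symbol of each difference operator on them, and transfer to $P_h = \I + ch^2 A_h$. Your additional remarks on the circulant structure and the orthogonality of the $N^p$ modes merely make explicit the completeness step that the paper leaves implicit.
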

\begin{proof}

	For a given $k\in[N]^p$, we define the vector $V\in \R^{[N]^p}$ which $l\in [N]^p$ entry is given by
% such that each entry  of $V$ is defined as 
$V_l=\e^{2 i \pi h\sum_{j=1}^{p}k_jl_j }$. From \cref{HJBdiscret}, we can verify that
	\begin{multline}
	(A_h V)_l=V_l  \bigg(\frac{1}{2}\sum_{j=1}^{p} \sigma_j^2 \frac{\e^{2i\pi h k_j}-2+\e^{-2i\pi h k_j}}{h^2} \\
	+\sum_{j=1}^{p}\bigg( g_j^{+} \frac{\e^{2i\pi h k_j}-1}{h}
	-g_j^{-} \frac{1-\e^{-2i\pi h k_j}}{h} \bigg) -\lambda
	\bigg) .
	\end{multline}
	Then this shows that 
	\[\mu(k):= \sum_{j=1}^{p} \sigma_{j}^2(\cos(2\pi k_j h)-1)/h^2-\lambda 
	+2i \sum_{j=1}^{p} \frac{\sin(\pi k_j h)}{h}(g_j^{+}\e^{i \pi k_j h}+g_j^{-}\e^{-i \pi k_j h})
	\]
	is an eigenvalue of the matrix $A_h$, and this allows to find all the $N^p$ eigenvalues of $A_h$ and therefore those of $P_h$ also.
\end{proof}

\begin{lemma}\label{lem-eigenval-pde2}
The eigenvalues of the matrix $P_h$ satisfy the following inequality:
\begin{equation*}
|\operatorname{Im}(\eta(k))|\leq \bigg(\sum_{j=1}^{p} \frac{2g_j^2}{\lambda \sigma_j^2}\bigg)^{\frac{1}{2}} \sqrt{\epsilon \big(1-\epsilon - \operatorname{Re}(\eta(k))\big)}, \quad  k\in[N]^p \enspace .
\end{equation*}
\end{lemma}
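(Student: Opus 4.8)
The plan is to start from the explicit eigenvalue formula of \Cref{lem-eigenval-pde1} and separate its real and imaginary parts. Using $g_j^{+}-g_j^{-}=g_j$ and $g_j^{+}+g_j^{-}=|g_j|$ together with $\e^{\pm i\pi k_j h}=\cos(\pi k_j h)\pm i\sin(\pi k_j h)$, the drift term $2ich\sin(\pi k_j h)(g_j^{+}\e^{i\pi k_j h}-g_j^{-}\e^{-i\pi k_j h})$ expands, for each $j$, into a real contribution $-2ch|g_j|\sin^2(\pi k_j h)$ and an imaginary contribution $chg_j\sin(2\pi k_j h)$. First I would record the resulting formulas
\[
\operatorname{Re}(\eta(k)) = 1 - c\sum_{j=1}^{p}\sigma_j^2\big(1-\cos(2\pi k_j h)\big) - c\lambda h^2 - 2ch\sum_{j=1}^{p}|g_j|\sin^2(\pi k_j h),
\]
\[
\operatorname{Im}(\eta(k)) = ch\sum_{j=1}^{p} g_j\sin(2\pi k_j h).
\]

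Next, recalling that $\epsilon = c\lambda h^2$ and using the identity $1-\cos(2\theta)=2\sin^2\theta$, I would compute the quantity controlling the right-hand side,
\[
1-\epsilon-\operatorname{Re}(\eta(k)) = 2c\sum_{j=1}^{p}\sigma_j^2\sin^2(\pi k_j h) + 2ch\sum_{j=1}^{p}|g_j|\sin^2(\pi k_j h) \ge 2c\sum_{j=1}^{p}\sigma_j^2\sin^2(\pi k_j h),
\]
where the final inequality just drops the nonnegative drift term. This lower bound is the key link between the real-part deficit and the weighted sum $\sum_j\sigma_j^2\sin^2(\pi k_j h)$.

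For the imaginary part, I would bound $|\operatorname{Im}(\eta(k))|\le ch\sum_j|g_j|\,|\sin(2\pi k_j h)|\le 2ch\sum_j|g_j|\,|\sin(\pi k_j h)|$, using $|\sin(2\theta)|\le 2|\sin\theta|$, and then apply the Cauchy--Schwarz inequality with the factorization $|g_j|\,|\sin(\pi k_j h)| = (|g_j|/\sigma_j)\cdot(\sigma_j|\sin(\pi k_j h)|)$ to obtain
\[
|\operatorname{Im}(\eta(k))| \le 2ch\Big(\sum_{j=1}^{p}\frac{g_j^2}{\sigma_j^2}\Big)^{1/2}\Big(\sum_{j=1}^{p}\sigma_j^2\sin^2(\pi k_j h)\Big)^{1/2}.
\]

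Finally, I would substitute the lower bound from the second step into the square root, giving $\big(\sum_j\sigma_j^2\sin^2(\pi k_j h)\big)^{1/2}\le (2c)^{-1/2}\big(1-\epsilon-\operatorname{Re}(\eta(k))\big)^{1/2}$, and simplify the prefactor $2ch/\sqrt{2c}=h\sqrt{2c}$. Using $c=\epsilon/(\lambda h^2)$ turns $h\sqrt{2c}$ into $\sqrt{2\epsilon/\lambda}$, which absorbs into the weighted sum to produce exactly $\big(\sum_j 2g_j^2/(\lambda\sigma_j^2)\big)^{1/2}\sqrt{\epsilon\big(1-\epsilon-\operatorname{Re}(\eta(k))\big)}$, as claimed. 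The only delicate point is the bookkeeping of constants through the substitution $\epsilon=c\lambda h^2$; every other ingredient is an elementary trigonometric identity or the Cauchy--Schwarz inequality.
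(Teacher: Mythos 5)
Your proposal is correct and follows essentially the same route as the paper's proof: separating real and imaginary parts of $\eta(k)$, applying Cauchy--Schwarz with the weights $|g_j|/\sigma_j$ and $\sigma_j|\sin(\pi k_j h)|$, dropping the nonnegative drift contribution from $1-\epsilon-\operatorname{Re}(\eta(k))$, and substituting $\epsilon=c\lambda h^2$. You simply spell out the final constant bookkeeping that the paper leaves implicit.
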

\begin{proof}
	From \Cref{lem-eigenval-pde1} and using that $g_j^{+}-g_j^{-}=g_j$, $g_j^{+}+g_j^{-}=|g_j|$ and $\epsilon = c\lambda h^2$, we deduce that the real and imaginary parts of the eigenvalue $\eta(k)$ are:
	\[
	\operatorname{Im}(\eta(k))=2ch \sum_{j=1}^{p} g_j\sin(\pi k_j h) \cos(\pi k_j h) \enspace ,
	\]
	\[
	\operatorname{Re}(\eta(k))= 1 - \epsilon -2c \sum_{j=1}^{p}(\sigma_j^2 +h|g_j|) (\sin(\pi k_j h))^2 \enspace .
	\]
	By using Cauchy-Schwartz inequality, we have:
	\[
	\sum_{j=1}^{p} |g_j\sin(\pi k_j h)| \leq \big(\sum_{j=1}^{p} \frac{g_j^2}{\sigma_{j}^2} \big)^{\frac{1}{2}} 
	\big(\sum_{j=1}^{p} \sigma_j^2 (\sin(\pi k_j h))^2 \big)^{\frac{1}{2}} \enspace .
	\]
	and this implies the desired inequality.
\end{proof}

%\begin{remark}\rm
Recall that if the spectrum of a matrix is in the region
$\Sigma_\epsilon(r)$ with the choice of $r$ shown in~\Cref{fig:sigmarb},
the $2$A-VI algorithm, applied to this matrix, converges with an asymptotic rate
$1-\sqrt{\epsilon}/2$ (see~\Cref{RmkSigmastronger}).
If follows from \Cref{lem-eigenval-pde2} that for a fixed value of $\epsilon$,
if the drift coefficients $g_i$ are
sufficiently small, the spectrum of the matrix
$P_h$ lies in a small neighborhood of the real segment $[0,1-\epsilon]$,
and so it satisfies the condition for acceleration
with the latter asymptotic rate. Moreover, when $\epsilon$
is small, one can show using the same lemma that the acceleration conditions
are met even for drift coefficients of order $1$ (this involves
a long and routine verification that we skip here).
We illustrate these properties in the next section.
\subsubsection{Numerical results}

In~\Cref{fig:includespectrum1,myfigure6} we consider the HJB equation~\cref{eq-pde} in one dimension $p=1$. We take the size of the discretization grid $N=500$ with $m=10$ actions at each state. We take the volatility $\sigma_1=1$ and the dissipation parameter $\lambda=1$. We generate the drift values $g_1(a,x)$ at each state $x$ and for each action $a$ randomly uniformly in the interval $[0,1]$ and we generate the rewards $r(a,x)$ randomly uniformly in $[0,100]$.
In subfigure~\ref{fig:subfig61} we display the spectrum of one matrix $P_h^\tau$. The subfigure~\ref{fig:subfig62} shows a zoom on this spectrum around the point $1$, where all the difficulty occurs. It shows that the eigenvalues of $P_h^\tau$ are not included in the peaked curve $\Gamma_{\epsilon}$ but are instead included in the more tolerant curve $\Gamma_{\epsilon}(r)$ with $r=(1-\sqrt{\epsilon}/2)/(1-\sqrt{\epsilon})$.

In~\Cref{myfigure6},
%subfigure~\ref{fig:subfig63},
we display the performance of value iteration, accelerated policy iteration and accelerated value iteration of degree $2$.

\Cref{fig:includespectrum2,myfigure7} display the analogue plots as \Cref{fig:includespectrum1,myfigure6} with an HJB equation in dimension $p=2$, with $N=30$, $\sigma_1=\sigma_2=\sqrt{2}$, $\lambda=2$, drifts $g_1(a,x)$ in the first direction generated uniformly randomly in $[0,1]$, drifts $g_2(a,x)$ in the second direction generated uniformly randomly in $[-1,0]$ and rewards $r(a,x)$ generated randomly uniformly in $[0,100]$.

We see that for these examples the accelerated algorithms $2$A-VI and $2$A-PI converge and are faster than the classical Value Iteration algorithm. 

\textcolor{blue}{We mention though that on these two examples the classical Policy Iteration algorithm is way faster than $2$A-PI and $2$A-VI, which is expected since the size of the matrices is small, as seen in~\Cref{myfigure2,myfigure3,myfigure4,myfigure41,myfigure42,myfigure5}. However, when the size of the matrices gets bigger our iterative algorithms become 
%more competitive like in~\Cref{myfigure4,myfigure41}, and even way more 
faster than Policy Iteration like in the large scale example of~\Cref{myfigure42}.}

\begin{figure}[htbp]%[h]
  \label{fig:includespectrum1}
	\centering
	\subfigure[Spectrum of $P_h^\tau$ and the accelerable regions.]
	{
	\includegraphics[scale=0.28]{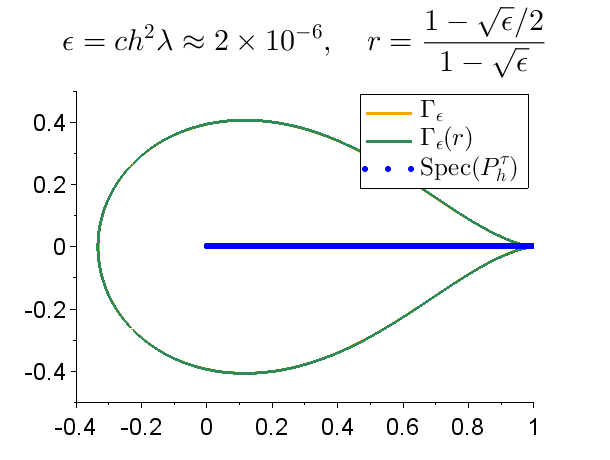}
	\label{fig:subfig61}
	}
	\subfigure[Zoom on \Cref{fig:subfig61} %the spectrum of $P_h^\tau$ and the accelerable regions 
	around $1$.]
	{
	\includegraphics[scale=0.28]{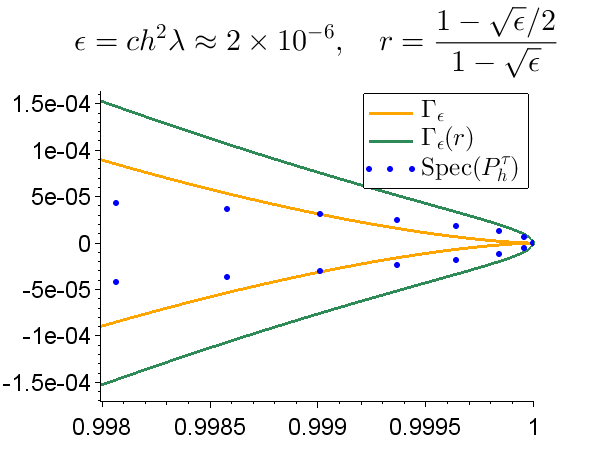}
	\label{fig:subfig62}
	}
        \caption{Spectrum of the matrix $P_h^\tau$ and acceleration region, for the HJB PDE in dimension one}
\end{figure}

\begin{figure}[htbp]
  \centering
%	\subfigure[VI, $2$A-PI and $2$A-VI]
	{
	\includegraphics[scale =0.35] {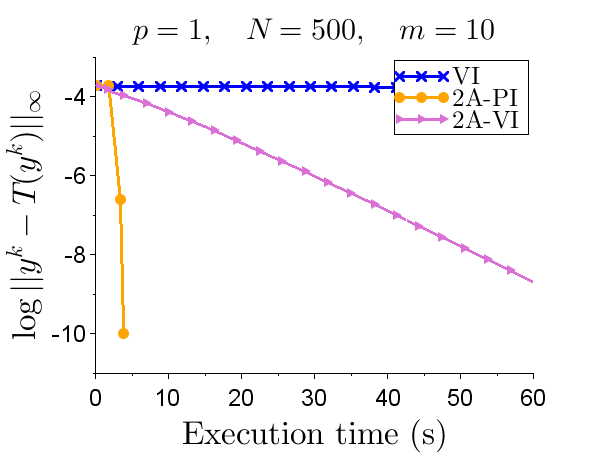}
	\label{fig:subfig63}
	}
	\caption{Solving HJB equation in one dimension with $N=500$, $\lambda=1$, $\sigma_1=1$, $g_1 \sim \mathcal{U}([0,1])$, $c=c_0/2\approx 0.5$, $\epsilon=ch^2 \lambda \approx 2\times 10^{-6}$ and $r \sim \mathcal{U}([0,100])$ .}
	\label{myfigure6}
\end{figure}

%\subsubsection{HJB equation in two dimensions}

\begin{figure}[htbp]%[h]
 \label{fig:includespectrum2}
	\centering
	\subfigure[Spectrum of $P_h^\tau$ and the accelerable regions.]
	{
		\includegraphics[scale=0.28]{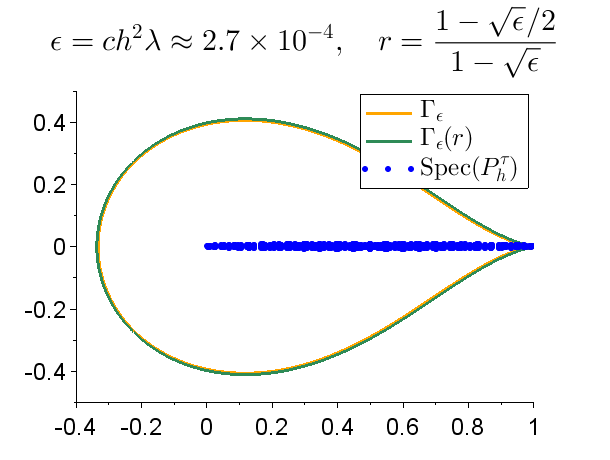}
		\label{fig:subfig71}
	}
	\subfigure[Zoom on \Cref{fig:subfig71} %the spectrum of $P_h^\tau$ and the accelerable regions 
	around $1$.]
	{
		\includegraphics[scale=0.28]{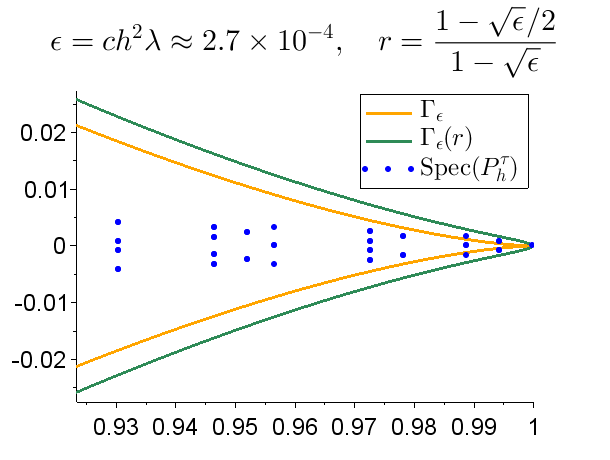}
		\label{fig:subfig72}
	}
        %	\subfigure[VI, $2$A-PI and $2$A-VI]
        \caption{Spectrum of the matrix $P_h^\tau$ and acceleration region, for the HJB PDE in dimension two}
        \end{figure}
        \begin{figure}[htbp]\centering
	{
		\includegraphics[scale =0.35] {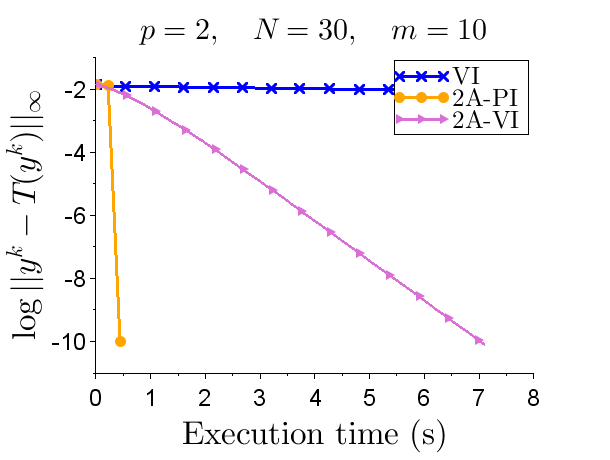}
		\label{fig:subfig73}
	}
	\caption{Solving HJB equation in two dimensions with $N=30$, $\lambda=2$, $\sigma_1=\sigma_2=\sqrt{2}$, $g_1 \sim \mathcal{U}([0,1])$, $g_2 \sim \mathcal{U}([-1,0])$, $c=c_0/2\approx 0.12$, $\epsilon=ch^2 \lambda \approx 2.7 \times 10^{-4}$ and $r \sim \mathcal{U}([0,100])$ .}
	\label{myfigure7}
\end{figure}

%\iffalse
%\subsection{Perturbation of symmetric matrices}
%We consider a matrix $G\in \R^{n\times n}$ of i.i.d. standard Gaussian random variables, we take $C=\frac{1}{n}G G^\top$. We consider $A=\I-\frac{1}{4}C$. We consider a matrix $B\in \R^{n\times n}$ of i.i.d. standard Gaussian random variables. We define the matrix $P=A+\frac{1}{n^{2.5}}B$, and $\epsilon=\frac{1}{4 n^{2}}$.
%\fi

%\begin{table}[hb]
%\begin{center}
%\caption{Margin settings}\label{tb:margins}
%\begin{tabular}{cccc}
%Page & Top & Bottom & Left/Right \\\hline
%First & 3.5 & 2.5 & 1.5 \\
%Rest & 2.5 & 2.5 & 1.5 \\ \hline
%\end{tabular}
%\end{center}
%\end{table}

\section{Conclusion}
\textcolor{blue}{In this paper, we solved affine fixed point problems of type $x=g+P x$, where $P$ is a non-symmetric matrix. We showed that, if the spectrum of $P$ is contained in an explicit region of the complex plane, a Nesterov's acceleration applied to the classical value iteration algorithm does converge with an accelerated asymptotic rate of $1-\epsilon^{1/2}$, instead of the standard rate of $1-\epsilon$. Moreover, we introduced a new accelerated algorithm, of order $d\geq 2$, and showed that, under a more demanding condition on the spectrum of $P$, this algorithm converges with a multiply accelerated asymptotic rate of $1-\epsilon^{1/d}$. Using these accelerated schemes, we developed an accelerated policy iteration algorithm that solves non-linear fixed point problems arising from Markov decision processes. We illustrated the performance of the accelerated schemes on two frameworks, one using random matrices and the second 
%arising from the discretization of
solving an Hamilton-Jacobi-Bellman equation.
As an open problem, it remains to generalize the convergence analysis of the accelerated value iteration algorithm, of degree $d\geq 2$, to the case of non-linear fixed point problems, and in particular to Markov decision processes.}

%% The present convergence results are only asymptotic.
%% We leave it for further study to

%\bibliographystyle{siam}
\bibliographystyle{alpha}
%\bibliography{accelerated-journalversion}
\bibliography{accelerated}

\newpage

 \appendix

\end{document}